\newtheorem{theorem}{\bf Theorem}[section]
\newtheorem{remark}{\bf Remark}[section]
\newtheorem{lemma}{\bf Lemma}[section]
\newtheorem{definitions}{\bf Definition}[section]
\numberwithin{equation}{section}
\title[Risk Sensitive Games for Continuous Time MDP]
{Nonzero-Sum Risk Sensitive Stochastic Games for Continuous Time Markov Chains}
{\author{ Mrinal K. Ghosh, K. Suresh Kumar and Chandan Pal }
\address{Department of
Mathematics, Indian Institute of Science, Bangalore -560012, India. }
\address{Department of
Mathematics, Indian Institute of Technology Bombay, Mumbai -
400076, India. }}
\email{mkg@math.iisc.ernet.in, suresh@math.iitb.ac.in, chandan14@math.iisc.ernet.in}
\begin{document}
\maketitle

\begin{abstract}
\noindent We study nonzero-sum stochastic games for continuous time Markov chains on a denumerable state space with risk sensitive discounted and ergodic cost criteria. For the discounted cost criterion we first show that the corresponding system of coupled HJB equations has an appropriate solution. Then under an additional additive structure on the transition rate matrix and payoff functions, we establish the existence of a Nash equilibrium in Markov strategies. For the ergodic cost criterion we assume  a Lyapunov type stability assumption and a small cost condition. Under these assumptions we show that the corresponding system of coupled HJB equations admits a solution which leads to the existence of Nash equilibrium in stationary strategies.
\end{abstract}

\vspace{10mm}

\noindent
 {\it Key words:} Risk sensitive stochastic games, Continuous time Markov chains, Coupled HJB equations, Nash equilibrium, Stationary strategy, Eventually stationary strategy.

\vspace{5mm}


\pagestyle{myheadings}
\thispagestyle{plain}
\markboth{MRINAL K. GHOSH, K SURESH KUMAR 
 AND CHANDAN PAL }{Risk Sensitive Games for Continuous Time MDP}

\section{Introduction}

We study nonzero-sum stochastic games on  infinite time horizon for continuous time Markov chains  on a denumerable state space. The performance evaluation criterion is exponential of integral cost which addresses the decision makers (i.e., players) attitude towards risk. In other words we address the problem of nonzero-sum risk sensitive stochastic games involving continuous time Markov chains. In the literature of stochastic games involving continuous time Markov chains, one usually studies the integral of the cost (see, e.g., Guo and Hern$\acute{\rm a}$ndez-Lerma  \cite{GH1}, \cite{GH2}, \cite{GH3}) which is the so called risk-neutral situation. In the exponential of integral cost, the evaluation criterion is multiplicative as opposed to the additive nature of evaluation criterion in the integral of cost case. This difference makes the risk sensitive case significantly different from its risk neutral counterpart. The study of risk sensitive criterion was first introduced by Bellman in \cite{RB}; see Whittle \cite{PW} and the references therein. Though this criterion  is studied extensively in the context of stochastic dynamic optimization both in  discrete and continuous time (see, e.g., Cavazos-Cadena and Fernandez-Gaucherand \cite{CF}, Di Masi and Stettner \cite{MS1}, \cite{MS2}, \cite{MS3}, Fleming and Hern$\acute{\rm a}$ndez-Hern$\acute{\rm a}$ndez \cite{FH}, \cite{FH1}, Fleming and McEneaney \cite{FM}, Hern$\acute{\rm a}$ndez-Hern$\acute{\rm a}$ndez and Marcus \cite{HM1}, \cite{HM2}, Howard and Matheson \cite{HM3}, Jacobson \cite{J},   Rothblum \cite{R}), the corresponding results for stochastic dynamic games are rather sparse. Notable exceptions are Basar \cite{TB}, El-Karoui and Hamadene \cite{KH}, James et al. \cite{JBE}, Klompstra \cite{KL}.
Recently risk sensitive continuous time Markov decision processes has been studied by Ghosh and Saha \cite{GhoshSaha}, Kumar and Pal \cite{SP}, \cite{SP1}. In this paper we extend the results of the above three papers to the nonzero-sum stochastic games. In particular we establish the existence of a Nash equilibria for risk-sensitive discounted and long-run average (or ergodic) cost criteria. 

The rest of this paper is organized as follows: Section 2 deals with the problem description and preliminaries. The discounted cost criterion is analyzed in Section 3. Here we first establish the existence of a solution to the corresponding coupled Hamilton-Jacobi- Bellman (HJB) equations. Then under certain additive structure on the transition rate (infinite) matrix and payoff functions, we establish the existence of a Nash equilibrium in Markov strategies. In Section 4, we turn our attention to the ergodic cost criterion. Under a Lyapunov type stability assumption and a small cost assumption, we carry out the vanishing discount asymptotics. This leads to the existence of appropriate solutions to the coupled HJB equations for the ergodic cost criterion. This in turn leads to the existence of a Nash equilibrium in stationary strategies. We conclude our paper in Section 5 with a few remarks.

\section{Problem Description and Preliminaries}
For the sake of notational simplicity we treat two player game. The $N$-player game for $N\geq 3$ is analogous.
Let  $U_i, i=1,2 $, be compact metric spaces and $ V_i=\mathcal{P}(U_i)$, the space of probability measures on $U_i$ with Prohorov topology. Let
$$U:=U_1\times U_2 \; \; \mbox{and} \; \; V:=V_1\times V_2 .$$ 
Let $\bar{\pi}_{ij}: U \to [0,\infty) $ for $i \neq j$ and $\bar{\pi}_{ii}: U \to \mathbb{R}$ for $i \in S $. Define
$\pi_{ij} :  V \to \mathbb{R} $ as follows: for $v:=(v_1,v_2)\in V$,
\begin{equation*}
 \pi_{ij}(v_1,v_2)=\int_{U_2}\int_{U_1}\bar{\pi}_{ij}(u_1,u_2) v_1(du_1)v_2(du_2):=\int_U \bar{\pi}_{ij}(u) v(du), 
\end{equation*} 
where $\ u:=(u_1,u_2)\in U$, $i,j \in S:=\{1,2,\cdots \}$. Throughout this paper we assume that: \\
\noindent {\bf (A1)} 
 The transition rates $\bar{\pi}_{ij}(u) \geq 0$  for all $i \neq j, \ u \in  U$  and the transition rates  $\bar{\pi}_{ij}(u)$ are conservative, i.e.,  $$ \sum_{j \in S}\bar{\pi}_{ij}(u)=0~ \mbox{for} ~i \in S ~\mbox{and}~ u \in U \, .$$ 
 The functions $\bar{\pi}_{ij}$ are continuous and  
\[
\sup_{i \in S, u \in U} [-\bar{\pi}_{ii}(u)]:=M<\infty  \, .
\]
We consider a continuous time controlled  Markov chain  $Y(t)$ with
 state space $S$ and controlled rate matrix $\Pi_{v_1,v_2}=(\pi_{ij}(v_1,v_2))$, given by the stochastic integral
\begin{equation}\label{cmc2}
d Y(t) \ = \ \int_{\mathbb{R}} h(Y(t-),v_1(t),v_2(t), z) \wp(dz dt) .
\end{equation}
Here $\wp(dz dt)$ is a Poisson random measure with intensity $dz dt$, where $dz dt$ denotes the 
Lebesgue measure on $\mathbb{R}\times[0,\infty)$. The control process $v(\cdot):=( v_1(\cdot),v_2(\cdot))$ takes value in 
$V$, and  $h : S \times  V \times \mathbb{R} \to \mathbb{R}$ is defined as follows:
\begin{equation}\label{coefficient}
h(i, v, z) \ = \ 
\left\{
\begin{array}{lll}
j-i & {\rm if}& z \in \Delta_{ij}(v)\\
0& &{\rm otherwise} , \\ 
\end{array}
\right. 
\end{equation}
where $v:=(v_1, v_2)$ and  $\{ \Delta_{ij}(v) : i \neq j, \, i, j \in S\}$ denotes intervals of the form $[a, \ b)$
with length of $\Delta_{ij}(v) \ = \ \pi_{ij}(v)$ which are pairwise disjoint for each fixed $v \in  V$.\\
If $v_i(t)=\bar{v}_i(t,Y(t-))$ for some measurable 
 map $\bar{v}_i : [0, \infty) \times S \to  V_i$, then $v_i$ is called a Markov strategy for the ith player. With an abuse of terminology, the map $\bar{v}_i$ itself is called a Markov strategy of player $i$. A Markov strategy $v_i$ is called a stationary strategy if the map $\bar{v}_i$ does not have explicit dependence on time. We denote the  set of all Markov strategies by ${\mathcal M}_i$ and the set of all stationary strategies by $\mathcal{S}_i$ for the ith player. The spaces $\mathcal{S}_1$ and $\mathcal{S}_2$ are endowed with the product topology. Since $V_1$ and $V_2$ are compact, it follows that $\mathcal{S}_1$ and $\mathcal{S}_2$ are compact as well. 

The existence of a unique weak solution to the 
equation (\ref{cmc2}) for a pair of  Markov strategies $(v_1,v_2)$ for a given initial distribution
$\mu \in {\mathcal P}(S)$ follows using the assumption (A1), see Guo and Hern$\acute{\rm a}$ndez-Lerma [\cite{GuoLermabook}, Theorem 2.3, Theorem 2.5, pp.14-15].

We now list the commonly used notations below.
\begin{itemize}
\item $ C_b([a,b]\times S)$ denotes the set of all functions $ f:[a,b]\times S\to \mathbb R$ such that
 $ f(\cdot,i)\in C_b[a,b],\;\mbox{for each}\;\; i\in S $.
 \item $ C^1((a,b)\times S)$ denotes the set of all functions $ f:(a,b)\times S\to \mathbb R$ such that
 $ f(\cdot,i)\in C^1(a,b),\;\mbox{for each}\;\; i\in S $.
  \item $ C^{\infty}_c(a,b)$ denotes the set of all infinitely differentiable functions on $(a,b)$ with compact support.
 \item For any $f:S\to \mathbb{R}, \, (v_1,v_2)\in V_1 \times V_2, \, \Pi_{v_1,v_2} f(i)= \displaystyle{\sum_{j\in S}} \pi_{ij}(v_1,v_2)f(j).$
\end{itemize}
Set 
\[
 B_W(S) \ = \ \{ h : S \to \mathbb{R} | \sup_{i \in S} \frac{|h(i)|}{W(i)} < \infty \}, 
\]
where $W$ is the Lyapunov function as in (A3) (to be described in Section \ref{LF}).
 Define for $h \in B_W(S)$, 
\begin{equation*}\label{vn}
\|h\|_W \ = \ \sup_{i \in S} \frac{|h(i)|}{W(i)} \, .
\end{equation*}
 Then
$B_W(S)$ is a Banach space with the norm $\|\cdot\|_W$. 

For $k=1,2$, let $\bar{r}_k: S \times U_1 \times U_2 \rightarrow [0, \ \infty)$ be the running cost function for the $k$th player, i.e., when state of the system is $i$ and the actions $(u_1,u_2)$ are chosen by the players, then the $k$th player   incurs a cost at the rate of $\bar{r}_k(i,u_1,u_2)$. Throughout this paper, we assume that
the functions $\bar{r}_k$ are bounded and continuous.
Each player wants to minimize his accumulated cost over his strategies. The time horizon is infinite and we consider two risk sensitive cost evaluation criteria, viz., discounted cost and ergodic cost criteria which we describe now.
\subsection{Discounted cost criterion} Let $\theta_k \in (0, \ \Theta)$, for a fixed $\Theta > 0$, be the risk aversion parameter chosen by the $k$th player, $k=1,2$.  For a pair of Markov strategies $(v_1,v_2)$, the  $\alpha$-discounted payoff criterion, for kth
player is given by 
\begin{equation}\label{discountedcost}
\mathcal{J}^{v_1, v_2}_{\alpha, k}(\theta_k, i) \ :=\dfrac{1}{\theta_k} \ln \   E^{v_1, v_2}_i
\Big[ e^{\theta_k \int^{\infty}_0 e^{-\alpha t} r_k(Y(t-), v_1(t,Y(t-)), v_2(t,Y(t-)) dt} \Big] , i \in S, 
\end{equation}
where $\alpha > 0$ is the discount parameter, $Y(t)$ is the Markov chain corresponding to $(v_1,v_2) \in  \mathcal{M}_1 \times \mathcal{M}_2 $ and  $E_{i}^{v_1,v_2} $ denotes the expectation with respect to the law of the process $Y(t)$ with initial condition $Y(0)=i$, and  $r_k : S \times  V_1 \times  V_2 \to [0, \ \infty)$
is given by
\begin{equation*}
 r_k(i,v_1,v_2)=\int_{U_2}\int_{U_1}\bar{r}_k(i,u_1,u_2) v_1(du_1)v_2(du_2).
\end{equation*}
\begin{definitions}
For $(\theta_1,\theta_2) \in (0, \ \Theta)\times (0, \ \Theta)$, a pair of strategies $(v^*_1, v^*_2) \in {\mathcal M}_1 \times {\mathcal M}_2$ is said to be a
Nash equilibrium  if 
\begin{eqnarray*}
\mathcal{J}^{v^*_1, v^*_2}_{\alpha, 1}(\theta_1, i) & \leq & \mathcal{J}^{v_1, v^*_2}_{\alpha,1}(\theta_1, i) ,\; \mbox{for all} \; v_1\in \mathcal{M}_1\; \mbox{ and} \; i \in S \\
\mathcal{J}^{v^*_1, v^*_2}_{\alpha, 2}(\theta_2, i) & \leq & \mathcal{J}^{v^*_1, v_2}_{\alpha, 2}(\theta_2, i) , \ \mbox{for all} \; v_2\in \mathcal{M}_2\; \mbox{ and} \; i \in S.
\end{eqnarray*}
\end{definitions}
\subsection{Ergodic Cost Criterion}
For a pair of Markov strategies  $(v_1,v_2)$, the risk-sensitive ergodic cost for player $k$ is given by 
\begin{equation}\label{main1}
\rho^{v_1,v_2}_k (\theta_k,i)\ := \   \limsup_{ T \to \infty} \frac{1}{\theta_k T} \ln E_{i}^{v_1,v_2} \Big[ e^{\theta_k \int^T_0  r_k(Y(t-),v_1 (t,Y(t-)),v_2 (t,Y(t-))) dt} 
 \Big] \, .
\end{equation} 

\begin{definitions}
For $(\theta_1,\theta_2) \in (0, \ \Theta)\times (0, \ \Theta)$, a pair of strategies $(v_1^{*},v_2^{*}) \in \mathcal{M}_1 \times \mathcal{M}_2$ is called a Nash equilibrium if 
\begin{equation*}
\rho_{1}^{v_1^*,v^*_2} (\theta_1,i)\ \leq \  \rho_{1}^{v_1,v^*_2} (\theta_1,i)\;\mbox{for all} \; v_1\in \mathcal{M}_1\; \mbox{ and} \; i \in S
\end{equation*}
and 
\begin{equation*}
\rho_{2}^{v_1^*,v^*_2} (\theta_2,i)\ \leq \  \rho_{2}^{v^*_1,v_2} (\theta_2,i)\;\mbox{for all} \; v_2\in \mathcal{M}_2\; \mbox{ and} \; i \in S.
\end{equation*}
\end{definitions}
\noindent We wish to establish the  existence of a Nash equilibrium in stationary strategies. 

We now outline a procedure for establishing the existence of a Nash equilibrium. We treat the ergodic cost case for this purpose. The discounted cost case can be analyzed along similar lines. Suppose player 2 announces that he is going to employ a strategy $v_2 \in \mathcal{S}_2$. In such a scenario, player 1 attempts to minimize
\begin{equation*}
\rho^{v_1,v_2}_1 (\theta_1,i)\ = \   \limsup_{ T \to \infty} \frac{1}{\theta_1 T} \ln E_{i}^{v_1,v_2} \Big[ e^{\theta_1 \int^T_0  r_1(Y(t-),v_1 (t,Y(t-)),v_2 (Y(t-))) dt} 
 \Big] \, ,
\end{equation*} 
over $v_1 \in \mathcal{M}_1$. Thus for player 1 it is a continuous time Markov decision problem (CTMDP) with risk sensitive ergodic cost. This problem has been studied by Ghosh and Saha \cite{GhoshSaha}, Kumar and Pal \cite{SP}, \cite{SP1}. In particular under certain assumptions, it is shown by Kumar and Pal \cite{SP}, \cite{SP1} that the following Hamilton-Jacobi-Bellman (HJB) equation
 \begin{equation*}
 \left\{\begin{aligned}
 \theta_1 \rho_{1} ~\hat{\psi}_{1}(i) &= \inf_{v_1\in V_1} \Big [ \Pi_{v_1,v_{2}(i)} 
\hat\psi_{1}(i) +\theta_1 r_1(i,v_1,v_{2}(i))\hat\psi_{1}(i) \Big ]   \\
 \hat\psi_{1}(i_0)  &= 1, 
 \end{aligned}
 \right.
\end{equation*}
has a suitable solution $(\rho_{1} ,\hat{\psi}_{1})$, where $\rho_{1} $ is a scalar and $\hat{\psi}_{1}:S \to \mathbb{R}$ has suitable growth rate; $i_0$ is a fixed element of $S$.  Furthermore it is shown by Kumar and Pal \cite{SP}, \cite{SP1} that
\begin{equation*}
\rho_1 \ = \  \inf_{v_1 \in \mathcal{M}_1} \limsup_{ T \to \infty} \frac{1}{\theta_1 T} \ln E_{i}^{v_1,v_2} \Big[ e^{\theta_1 \int^T_0  r_1(Y(t-),v_1 (t,Y(t-)),v_2 (Y(t-))) dt} 
 \Big] \, ,
\end{equation*}
and if $v_1^* \in \mathcal{S}_1$ is such that for $i \in S$
\begin{eqnarray*}
 && \inf_{v_1\in V_1} \Big [ \Pi_{v_1,v_{2}(i)} 
\hat\psi_{1}(i) +\theta_1 r_1(i,v_1,v_{2}(i))\hat\psi_{1}(i) \Big ]  \nonumber \\
&=&  \Pi_{v_{1}^*(i),v_{2}(i)} 
\hat\psi_{1}(i) +\theta_1 r_1(i,v_{1}^*(i),v_{2}(i))\hat\psi_{1}(i) ,
\end{eqnarray*}
then $v_1^* \in \mathcal{S}_1$ is an optimal control for player 1, i.e., for any $i \in S $ 
\begin{equation*}
\rho_1 \ = \   \limsup_{ T \to \infty} \frac{1}{\theta_1 T} \ln E_{i}^{v_1^*,v_2} \Big[ e^{\theta_1 \int^T_0  r_1(Y(t-),v_1^* (Y(t-)),v_2 (Y(t-))) dt} 
 \Big] \, .
\end{equation*}
 In other words given that player 2 is using the strategy $v_2 \in \mathcal{S}_2$, $v_1^* \in \mathcal{S}_1$ is an optimal response for player 1. Clearly $v_1^*$ depends on $v_2$ and moreover there may be several optimal responses for player 1 in $\mathcal{S}_1$. Analogous results holds for player 2 if player 1 announces that he is going to use a strategy $v_1 \in \mathcal{S}_1$. Hence given a pair of strategies $(v_1,v_2) \in \mathcal{S}_1 \times \mathcal{S}_2$, we can find a set of pairs of optimal responses $\{(v_1^*,v_2^*) \in \mathcal{S}_1 \times \mathcal{S}_2\}$ via the appropriate pair of HJB equations described above. This defines a set-valued map. Clearly any fixed point of this set-valued map is a Nash equilibrium.
 
 The above discussion leads to the following procedure for finding a pair of Nash equilibrium strategies for ergodic cost criterion.
 Suppose that there exist a pair of stationary strategies  $(v_1^*,v_2^*) \in \mathcal{S}_1 \times \mathcal{S}_2$, a pair of scalars $(\rho_1^*, \rho_2^*)$ and a pair of functions $(\hat{\psi}_{1}^*,\hat{\psi}_{2}^*)$ with appropriate growth conditions, such that the coupled HJB equations given by
 \begin{equation*}
 \left\{\begin{aligned}
        \theta_1 \rho^{*}_{1} ~\hat{\psi}^{*}_{1}(i) &= \inf_{v_1\in V_1} \Big [ \Pi_{v_1,v_{2}^*(i)} 
\hat\psi^{*}_{1}(i) +\theta_1 r_1(i,v_1,v_{2}^*(i))\hat\psi^{*}_{1}(i) \Big ]  \\
       &=  \Pi_{v_{1}^*(i),v_{2}^*(i)} 
\hat\psi^{*}_{1}(i) +\theta_1 r_1(i,v_{1}^*(i),v_{2}^*(i))\hat\psi^{*}_{1}(i)    \\
\displaystyle{ \hat\psi^{*}_{1}(i_0) } &= 1,  \\
\theta_2 \rho^{*}_{2} ~\hat{\psi}^{*}_{2}(i) &= \inf_{v_2\in V_2} \Big [ \Pi_{v_{1}^*(i),v_2} 
\hat\psi^{*}_{2}(i) +\theta_2 r_2(i,v_{1}^*(i),v_{2})\hat\psi^{*}_{2}(i) \Big ] \\
&=  \Pi_{v_{1}^*(i),v_{2}^*(i)} 
\hat\psi^{*}_{2}(i) +\theta_2 r_2(i,v_{1}^*(i),v_{2}^*(i))\hat\psi^{*}_{2}(i)   \\
\displaystyle{ \hat\psi^{*}_{2}(i_0) } &= 1,
       \end{aligned}
 \right.
\end{equation*}
where as before $i_0 \in S$ is a fixed point, then it can be shown that 
$(v_1^*,v_2^*)$ is a pair of Nash equilibrium and  $(\rho_1^*, \rho_2^*)$ is the pair of corresponding Nash values. An analogous coupled system of HJB equation for the discounted cost criterion can be derived along similar lines. We first solve the coupled HJB equations for the discounted cost criterion (to be describe in the next section). We then carry out the vanishing discount asymptotics to obtain an appropriate solution of the above coupled HJB equation for the ergodic cost criterion.
\section{Analysis of Discounted Cost Criterion}
We carry out our analysis of the discounted cost criterion via the criterion 
\begin{equation}\label{discountedcost1}
J^{v_1, v_2}_{\alpha, k}(\theta_k, i) \ := \   E^{v_1, v_2}_i
\Big[ e^{\theta_k \int^{\infty}_0 e^{-\alpha t} r_k(X(t), v_1(t,Y(t-)), v_2(t,Y(t-)) dt} \Big] .
\end{equation}
Since logarithmic is an increasing function, therefore any Nash equilibrium for the criterion (\ref{discountedcost}) is Nash equilibrium for above criterion. To establish the existence of a Nash equilibrium for the discounted cost criterion, we first study the corresponding coupled  HJB equations.
 \subsection{Coupled HJB Equations for the Discounted Case}
 Let $v_2 \in \mathcal{S}_2$ be an arbitrarily fixed strategy of the second player. Consider the CTMDP for player 1 with the $\alpha$-discounted $(\alpha>0)$  risk-sensitive cost criterion  
\begin{equation}\label{discounted_cost1}
 J_{\alpha,1}^{v_2} (\theta_1,i,v_1)= E_i^{v_1,v_2}\left[ e^{\theta_1\int_{0}^{\infty} e^{-\alpha t} \;r_1(Y(t-), v_1(t,Y(t-)),v_2(Y(t-))) dt} \right],  
\end{equation}
 where $Y(t)$ is the process (\ref{cmc2}) corresponding to $(v_1,v_2) \in \mathcal{M}_1\times \mathcal{S}_2$ with  initial condition $i \in S$. \\
We define the value function for the cost criterion (\ref{discounted_cost1}) by
\begin{equation*}
 \psi_{\alpha,1}^{v_2}(\theta_1,i)= \inf_{v_1\in \mathcal M_1}  J_{\alpha,1}^{v_2} (\theta_1,i,v_1).
\end{equation*}
Then by the result of Ghosh and Saha \cite{GhoshSaha}, Kumar and Pal \cite{SP}, $\psi^{v_2}_{\alpha,1}$ is the unique  solution in $C_b([0,\Theta]\times S)\cap C^1((0,\Theta)\times S)$ to
 \begin{equation}\label{HJB1}
 \left\{\begin{aligned}
\alpha \theta_1  \dfrac{d\psi^{v_2}_{\alpha,1}}{d\theta_1}(\theta_1,i)&= \inf_{v_1\in V_1}\Big [ \Pi_{v_1,v_2(i)} 
\psi^{v_2}_{\alpha,1}(\theta_1,i) + \theta_1 r_1(i,v_1,v_2(i))\psi^{v_2}_{\alpha,1}(\theta_1,i) \Big ]   \\
\displaystyle{ \psi^{v_2}_{\alpha,1 }(0,i) } &= 1 .
 \end{aligned}
 \right.
\end{equation}

Similarly let player 1 fix a strategy $v_1 \in \mathcal{S}_1$ and consider the CTMDP for player 2 with $\alpha$-discounted risk-sensitive cost criterion  
\begin{equation*}
 J_{\alpha,2}^{v_1} (\theta_2,i,v_2)= E_i^{v_1,v_2}\left[ e^{\theta_2\int_{0}^{\infty} e^{-\alpha t} \;r_2(Y(t-), v_1(Y(t-)),v_2(t,Y(t-))) dt} \right].  
\end{equation*}
 Set 
\begin{equation*}
 \psi_{\alpha,2}^{v_1}(\theta_2,i)= \inf_{v_2\in \mathcal M_2}  J_{\alpha,2}^{v_1} (\theta_2,i,v_2).
\end{equation*}
Then, as before, $\psi^{v_1}_{\alpha,2}$ is the unique  solution in $C_b([0,\Theta]\times S)\cap C^1((0,\Theta)\times S)$ to
 \begin{equation}\label{HJB2}
 \left\{\begin{aligned}
\alpha \theta_2  \dfrac{d\psi^{v_1}_{\alpha,2}}{d\theta_2}(\theta_2,i)&= \inf_{v_2\in V_2}\Big [ \Pi_{v_1(i),v_2} 
\psi^{v_1}_{\alpha,2}(\theta_2,i) + \theta_2 r_2(i,v_1(i),v_2)\psi^{v_1}_{\alpha,2}(\theta_2,i) \Big ]   \\
\displaystyle{ \psi^{v_1}_{\alpha,2}(0,i) } &= 1 .
 \end{aligned}
 \right.
\end{equation}
To proceed further we establish some technical results needed later.  
\begin{lemma}\label{alphabound} Assume (A1). Then for each $\theta_k \in (0,\Theta)$, $k=1,2$, and $\alpha >0$, $i \in S $ $$1 \leq \psi^{v_1}_{\alpha,2}(\theta_2,i) \leq e^{\frac{\theta_2 \|r_2\|_{\infty}}{\alpha} },\; \; \mbox{and} \; \; 1 \leq \psi^{v_2}_{\alpha,1}(\theta_1,i) \leq e^{\frac{\theta_1 \|r_1\|_{\infty}}{\alpha} }. $$
Also 
$$ \| \dfrac{d\psi^{v_1}_{\alpha,2}}{d\theta_2}\|_{\infty} \leq \frac{ \|r_2\|_{\infty}}{\alpha} e^{\frac{\Theta \|r_2\|_{\infty}}{\alpha} }, \; \; \mbox{and} \; \; \|\dfrac{d \psi^{v_2}_{\alpha,1}}{d\theta_1}\|_{\infty} \leq \frac{ \|r_1\|_{\infty}}{\alpha}e^{\frac{\Theta \|r_1\|_{\infty}}{\alpha} }, $$
where $\|\cdot\|_{\infty}$ denotes the sup-norm.
\end{lemma}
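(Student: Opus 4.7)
The plan is to prove the pointwise and derivative bounds directly from the probabilistic representation of the value functions, rather than from the HJB equations (which would introduce an unwanted dependence on the rate bound $M$).

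For the pointwise bounds on $\psi^{v_2}_{\alpha,1}$, I first note that because $r_1 \geq 0$, the exponent $\theta_1 \int_0^\infty e^{-\alpha t} r_1(\cdots)\,dt$ is non-negative pathwise, so the integrand in (\ref{discountedcost1}) is at least $1$, and taking expectation followed by infimum over $v_1\in\mathcal{M}_1$ preserves this, yielding $\psi^{v_2}_{\alpha,1}(\theta_1,i)\ge 1$. For the upper bound, I use the deterministic pathwise estimate $\int_0^\infty e^{-\alpha t} r_1(\cdots)\,dt \leq \|r_1\|_\infty/\alpha$, which gives $e^{\theta_1\int_0^\infty e^{-\alpha t} r_1\,dt} \leq e^{\theta_1\|r_1\|_\infty/\alpha}$ almost surely, and again expectation and infimum both preserve this constant bound. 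The argument for $\psi^{v_1}_{\alpha,2}$ is identical.

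For the derivative bound, the plan is to show that $\psi^{v_2}_{\alpha,1}$ is Lipschitz in $\theta_1$ uniformly in $i$, with an explicit Lipschitz constant matching the claim. For each fixed $v_1\in\mathcal{M}_1$, the function $\theta\mapsto F(\theta;v_1):=E_i^{v_1,v_2}\bigl[e^{\theta\int_0^\infty e^{-\alpha t}r_1\,dt}\bigr]$ is $C^1$ (differentiation under the expectation is justified by dominated convergence, since the bounded random variable $X=\int_0^\infty e^{-\alpha t}r_1\,dt\in[0,\|r_1\|_\infty/\alpha]$ makes the dominating function $X e^{\Theta X}$ integrable), with
\[
F'(\theta;v_1) = E_i^{v_1,v_2}\bigl[X e^{\theta X}\bigr] \le \tfrac{\|r_1\|_\infty}{\alpha}\, e^{\theta\|r_1\|_\infty/\alpha}.
\]
Since $\psi^{v_2}_{\alpha,1}(\theta_1,i)=\inf_{v_1}F(\theta_1;v_1)$, a standard $\epsilon$-optimal selector argument shows that for $0<\theta_1<\theta_1'<\Theta$,
\[
0 \le \psi^{v_2}_{\alpha,1}(\theta_1',i) - \psi^{v_2}_{\alpha,1}(\theta_1,i) \le (\theta_1'-\theta_1)\,\tfrac{\|r_1\|_\infty}{\alpha}\, e^{\Theta\|r_1\|_\infty/\alpha}.
\]
(The lower bound $0$ comes from the monotonicity of each $F(\cdot;v_1)$ in $\theta$.) Since $\psi^{v_2}_{\alpha,1}\in C^1((0,\Theta)\times S)$ by the cited results of Ghosh–Saha and Kumar–Pal, dividing by $\theta_1'-\theta_1$ and passing to the limit gives $0\le d\psi^{v_2}_{\alpha,1}/d\theta_1 \le \tfrac{\|r_1\|_\infty}{\alpha}e^{\Theta\|r_1\|_\infty/\alpha}$. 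The argument for $\psi^{v_1}_{\alpha,2}$ is symmetric.

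The mildly delicate point is justifying the Lipschitz-to-derivative passage for the infimum $\psi$: the naive approach of differentiating through the infimum is not valid, but the uniform Lipschitz constant on the family $\{F(\cdot;v_1)\}_{v_1}$ transfers to $\psi$ by the $\epsilon$-selector argument, and then $C^1$-regularity (already established in the literature) upgrades the Lipschitz bound to a pointwise derivative bound. I expect no deeper obstacle, since this is essentially an envelope-type estimate with good uniformity.
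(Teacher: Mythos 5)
Your proposal is correct and follows essentially the same route as the paper: the pointwise bounds come from the pathwise estimate $0\le \int_0^\infty e^{-\alpha t}r_1\,dt\le \|r_1\|_\infty/\alpha$, and the derivative bound comes from a uniform (in the control) Lipschitz estimate on $\theta\mapsto E_i^{v_1,v_2}\bigl[e^{\theta\int_0^\infty e^{-\alpha t}r_1\,dt}\bigr]$ transferred across the infimum, then combined with the known $C^1$ regularity of $\psi^{v_2}_{\alpha,1}$. The only cosmetic difference is that the paper passes the Lipschitz bound through the infimum via $|\inf_v G(\theta+\epsilon,\cdot,v,\cdot)-\inf_v G(\theta,\cdot,v,\cdot)|\le\sup_v|G(\theta+\epsilon,\cdot,v,\cdot)-G(\theta,\cdot,v,\cdot)|$ and a mean value step, whereas you use an $\epsilon$-optimal selector, which amounts to the same estimate.
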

\begin{proof}
Since
\begin{equation*}
 \psi_{\alpha,1}^{v_2}(\theta_1,i)= \inf_{v_1\in \mathcal M_1}   E_i^{v_1,v_2}\left[ e^{\theta_1\int_{0}^{\infty} e^{-\alpha t} \;r_1(Y(t-), v_1(t,Y(t-)),v_2(Y(t-))) dt} \right],
\end{equation*}
 it follows that
$$
1 \leq \psi_{\alpha,1}^{v_2}(\theta_1,i) \leq e^{\frac{\theta_1 \|r_1\|_{\infty} }{\alpha} },
\ \  \forall\; 0<\theta_1 < \Theta, i \in S ,\
0<\alpha<1.
$$
Similarly for $\psi_{\alpha,2}^{v_1}$ we obtain 
$$
1 \leq \psi_{\alpha,2}^{v_1}(\theta_2,i) \leq e^{\frac{\theta_2 \|r_2\|_{\infty} }{\alpha} },
\ \  \forall\; 0<\theta_2 < \Theta, i \in S ,\
0<\alpha<1.
$$
For $(v_1,v_2)\in \mathcal{M}_1 \times \mathcal{S}_2, \, i\in S $, set 
 $$
F_{\alpha}^1(i,v_1,v_2)=\int_{0}^{\infty} e^{-\alpha t} \;r_1(Y(t-), v_1(t,Y(t-)),v_2(Y(t-))) dt
$$ 
and 
$$
G_{\alpha}^1(\theta_1,i,v_1,v_2)=  E_i^{v_1,v_2}\left[ e^{\theta_1 \int_{0}^{\infty} e^{-\alpha t} \;r_1(Y(t-), v_1(t,Y(t-)),v_2(Y(t-))) dt}\right].
$$
 It is easily seen that 
$$
 \frac{d G_{\alpha}^1 }{ d \theta_1} = E_i^{v_1,v_2}[F_{\alpha}^1 e^{\theta_1 F_{\alpha}^1}] \leq
\frac{ \|r_1\|_{\infty}}{\alpha}e^{\frac{\Theta \|r_1\|_{\infty}}{\alpha} }.$$
 For each $\epsilon >0 $, 
$$
G_{\alpha}^1(\theta_1+\epsilon,i,v_1,v_2)-G_{\alpha}^1(\theta_1,i,v_1,v_2)= \epsilon \frac{d G_{\alpha}^1}{d \theta_1}
    (\theta_{\epsilon},i,v_1,v_2),
$$
for some $\theta_{\epsilon}$ which lies on the line segment joining $\theta_1$ and $\theta_1+\epsilon$.
Therefore 
$$
|G_{\alpha}^1(\theta_1+\epsilon,i,v_1,v_2)-G_{\alpha}^1(\theta_1,i,v_1,v_2)| \leq  \epsilon \frac{ \|r_1\|_{\infty}}{\alpha}e^{\frac{\Theta \|r_1\|_{\infty}}{\alpha} }.
$$
Thus 
\begin{eqnarray*}
| \psi_{\alpha,1}^{v_2}(\theta_1+\epsilon,i)-\psi_{\alpha,1}^{v_2}(\theta_1,i)| 
 &\leq & \sup_{v_1 \in \mathcal{M}_1} 
|G_{\alpha}^1(\theta_1+\epsilon,i,v_1,v_2)-G_{\alpha}^1(\theta_1,i,v_1,v_2)| \\
&\leq &  \epsilon \frac{ \|r_1\|_{\infty}}{\alpha}e^{\frac{\Theta \|r_1\|_{\infty}}{\alpha} }.
\end{eqnarray*}
Analogous bound can be proved if $\epsilon<0$. Hence it follows that 
$$
\Big \| \dfrac{d\psi^{v_2}_{\alpha,1}}{d\theta_1} (\theta_1,i) \Big \|_{\infty}
\leq \frac{ \|r_1\|_{\infty}}{\alpha} e^{\frac{\Theta \|r_1\|_{\infty}}{\alpha} }.
$$
Using analogous arguments we can show that 
$$
\Big \| \dfrac{d\psi^{v_1}_{\alpha,2}}{d\theta_2} (\theta_2,i) \Big \|_{\infty}
\leq \frac{ \|r_2\|_{\infty}}{\alpha} e^{\frac{\Theta \|r_2\|_{\infty}}{\alpha} }.
$$
This completes the proof.
\end{proof}
\begin{lemma}\label{continuity}
Assume (A1). Let $v_1 \in \mathcal{S}_1, v_2 \in \mathcal{S}_2. $ Then the maps $v_2 \to \psi^{v_2}_{\alpha,1}$ and $v_1 \to \psi^{v_1}_{\alpha,2}$ are continuous.
\end{lemma}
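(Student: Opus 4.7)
The plan is to combine the uniform bounds from Lemma~\ref{alphabound} with a subsequential compactness argument and the uniqueness of solutions to the HJB equation~(\ref{HJB1}). I treat the map $v_2 \mapsto \psi^{v_2}_{\alpha,1}$; the other case is identical by symmetry. Fix a sequence $v_2^n \to v_2$ in $\mathcal{S}_2$ and set $\psi^n := \psi^{v_2^n}_{\alpha,1}$. Since $\mathcal{S}_2$ carries the product topology, this convergence means $v_2^n(i) \to v_2(i)$ in the Prohorov topology of $V_2$ for every $i \in S$, and the goal is to conclude $\psi^n \to \psi^{v_2}_{\alpha,1}$ at least pointwise in $(\theta_1, i)$.

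First, Lemma~\ref{alphabound} supplies uniform bounds on both $\psi^n(\theta_1, i)$ and $\frac{d\psi^n}{d\theta_1}(\theta_1, i)$. Hence for each fixed $i \in S$ the family $\{\psi^n(\cdot, i)\}$ is uniformly bounded and equi-Lipschitz on $[0,\Theta]$, and a diagonal Arzel\`a--Ascoli argument across the countable set $S$ extracts a subsequence (still indexed by $n$) and a limit $\bar\psi$ with $\psi^n(\cdot, i) \to \bar\psi(\cdot, i)$ uniformly on $[0,\Theta]$ for every $i \in S$.

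Next, I pass to the limit in the integral form of~(\ref{HJB1}) obtained by integrating from $0$ (the right-hand side vanishes at $\theta_1 = 0$ by the conservativeness condition in (A1), so the integrand is well-behaved). Two continuity facts drive this step. First, since $\bar r_1$ and $\bar\pi_{ij}$ are continuous on the compact product $U = U_1 \times U_2$ and thus uniformly continuous, the family $\{u_2 \mapsto \int_{U_1} \bar\pi_{ij}(u_1, u_2)\, v_1(du_1) : v_1 \in V_1\}$ is equicontinuous on $U_2$; combined with $v_2^n(i) \to v_2(i)$ this yields $\pi_{ij}(v_1, v_2^n(i)) \to \pi_{ij}(v_1, v_2(i))$ uniformly in $v_1 \in V_1$, and analogously for $r_1$. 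Second, for the infinite sum $\sum_j \pi_{ij}(v_1, v_2^n(i))\psi^n(\theta_1, j)$, the conservation identity $\sum_{j\neq i}\pi_{ij}(\cdot) = -\pi_{ii}(\cdot) \leq M$ together with pointwise convergence of each $\pi_{ij}$ gives, via Scheff\'e's lemma, total-variation convergence of the nonnegative measures $\sum_{j\neq i}\pi_{ij}(v_1,v_2^n(i))\,\delta_j$ (uniformly in $v_1$, by the equicontinuous setup above). Splitting the sum into a finite block (on which $\psi^n \to \bar\psi$ uniformly) and a tail (bounded by $\|\psi^n\|_\infty$ from Lemma~\ref{alphabound} times the total-variation difference) lets me pass the limit through the sum. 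Compactness of $V_1$ plus the resulting uniform-in-$v_1$ convergence of the integrand lets me interchange $\lim$ with $\inf_{v_1 \in V_1}$. Therefore $\bar\psi \in C_b([0,\Theta]\times S)\cap C^1((0,\Theta)\times S)$ solves~(\ref{HJB1}) with $v_2$, and by the uniqueness cited just after~(\ref{HJB1}) we conclude $\bar\psi = \psi^{v_2}_{\alpha,1}$. Since every subsequence of $\psi^n$ has a further subsequence with this same limit, the full sequence converges.

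The main obstacle is the coupled limit passage through the infinite sum $\sum_j \pi_{ij}(v_1, v_2^n(i))\,\psi^n(\theta_1, j)$ uniformly in $v_1 \in V_1$: the sum is not absolutely convergent (its tails are controlled only by the conservativeness identity $\sum_j \pi_{ij} = 0$ and the bound $M$, not by any summability of $\pi_{ij}$), and yet uniformity in $v_1$ must be maintained so that $\inf$ and $\lim$ can be interchanged. All remaining steps are standard Arzel\`a--Ascoli compactness and uniqueness of the HJB solution.
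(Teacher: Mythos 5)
Your proposal is correct and follows essentially the same route as the paper: uniform bounds from Lemma \ref{alphabound}, an Arzel\`a--Ascoli extraction over the countable state space, passage to the limit in equation (\ref{HJB1}) (the paper does this in distributional form against test functions $\varphi\in C_c^\infty(0,\Theta)$ rather than in integral form, and identifies the limit via the It\^o/stochastic representation rather than by citing uniqueness directly), and a subsequence argument to upgrade to convergence of the full sequence. One small quibble: the sum $\sum_j \pi_{ij}(v_1,v_2^n(i))\psi^n(\theta_1,j)$ \emph{is} absolutely convergent, since $\psi^n$ is bounded and $\sum_j|\pi_{ij}(\cdot)|\le 2M$ by (A1); the genuine issue, which your finite-block-plus-tail and Scheff\'e-type argument handles correctly, is the uniformity in $n$ and $v_1$ of the tail estimate needed to interchange the limit with $\inf_{v_1\in V_1}$.
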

\begin{proof}
Let $v_2^m \to \hat{v}_2$ in $\mathcal{S}_2$, i.e., $v_2^m(i) \to \hat{v}_2(i)$ in $V_2$ for each $i \in S.$ By Lemma \ref{alphabound}, we have  $$ 1 \leq \psi^{v_2^m}_{\alpha,1}(\theta_1,i) \leq e^{\frac{\theta_1 \|r_1\|_{\infty}}{\alpha} } \; \; \mbox{and} \; \; \dfrac{ \psi^{v_2^m}_{\alpha,1}}{d\theta_1}(\theta_1,i) \leq \frac{ \|r_1\|_{\infty}}{\alpha}e^{\frac{\Theta \|r_1\|_{\infty}}{\alpha} }. $$
Thus by Arzela-Ascoli theorem there exists a function $\psi_{\alpha,1}$ in $C_b((0,\Theta]\times S)$ and a subsequence denoted by $\psi^{v_2^m}_{\alpha,1}$ by an abuse of notation, such that $\{\psi^{v_2^m}_{\alpha,1}\}$ converges uniformly to $\psi_{\alpha,1}$ over compact subset of $(0,\Theta)\times S $. 
 Let $\varphi \in C^{\infty}_c(0,\Theta)$. Then we have
\interdisplaylinepenalty=0
\begin{eqnarray*}
&&-\int_0^{\Theta} \alpha  \frac{ d (\theta_1 \varphi)  }{d \theta_1}  \psi^{v_2^m}_{\alpha,1} d\theta_1 =
\int_0^{\Theta} \alpha \theta_1 \frac{ d \psi^{v_2^m}_{\alpha,1} }{d \theta_1} \varphi d\theta_1  \\
&&= \int_0^{\Theta} \inf_{v_1\in V_1}\Big [ \Pi_{v_1,v_2^m(i)} 
\psi^{v_2^m}_{\alpha,1}(\theta_1,i) + \theta_1 r_1(i,v_1,v_2^m(i))\psi^{v_2^m}_{\alpha,1}(\theta_1,i) \Big ] \varphi d\theta_1 \\
&=&\int_0^{\Theta} \inf_{v_1\in V_1}\Big \{ \int_{U_1}\int_{U_2}\bar{\Pi}_{u_1,u_2} {v}_1(du_1)v_2^m(i)(du_2)
\psi^{v_2^m}_{\alpha,1}(\theta_1,i) \nonumber \\ 
&+&\theta_1 \int_{U_1}\int_{U_2} \bar{r}_1(i,u_1,u_2)){v}_1(du_1)v_2^m(i)(du_2) \psi^{v_2^m}_{\alpha,1}(\theta_1,i) \Big \}
\varphi d\theta_1 .
\end{eqnarray*}
 Letting $m \to \infty$  along a suitable subsequence and using (A1), we get for each $i \in S$, 
 \interdisplaylinepenalty=0
\begin{eqnarray*}
&&-\int_0^{\Theta} \alpha  \frac{ d (\theta_1 \varphi)  }{d \theta_1} \psi_{\alpha,1}(\theta_1, i) d\theta_1 \\
&=& \int_0^{\Theta} \inf_{v_1\in V_1}\Big \{ \int_{U_1}\int_{U_2}\bar{\Pi}_{u_1,u_2} {v}_1(du_1)\hat{v}_2(i)(du_2)
\psi_{\alpha,1}(\theta_1,i) \nonumber \\ 
&+&\theta_1 \int_{U_1}\int_{U_2} \bar{r}_1(i,u_1,u_2)){v}_1(du_1)\hat{v}_2(i)(du_2) \psi_{\alpha,1}(\theta_1,i) \Big \}
\varphi d\theta_1 \\
&=& \int_0^{\Theta} \inf_{v_1\in V_1}\Big [ \Pi_{v_1,\hat{v}_2(i)} 
\psi_{\alpha,1}(\theta_1,i) + \theta_1 r_1(i,v_1,\hat{v}_2(i))\psi_{\alpha,1}(\theta_1,i) \Big ] \Big \} \varphi (\theta) d\theta_1  .
\end{eqnarray*}
Therefore we have
\begin{eqnarray*}
  \alpha\theta_1 \dfrac{d \psi_{\alpha,1}}{d \theta_1} = 
 \inf_{v_1\in V_1}\Big [ \Pi_{v_1,\hat{v}_2(i)} 
\psi_{\alpha,1}(\theta_1,i) + \theta_1 r_1(i,v_1,\hat{v}_2(i))\psi_{\alpha,1}(\theta_1,i) \Big ] ,
\end{eqnarray*}
in the sense of distribution.  Note that right hand side above is continuous. Therefore
 $\frac{d \psi_{\alpha,1}}{d \theta} \in C((0,\Theta)\times S)$.
Thus $\psi_{\alpha,1} \in C_b([0,\Theta]\times S)\cap C^1((0,\Theta)\times S)$ is a solution to
\interdisplaylinepenalty=0
 \begin{equation*}
 \left\{\begin{aligned}
\alpha \theta_1  \dfrac{d\psi_{\alpha,1}}{d\theta_1}(\theta_1,i)&= \inf_{v_1\in V_1}\Big [ \Pi_{v_1,\hat{v}_2 (i)} 
\psi_{\alpha,1}(\theta_1,i) + \theta_1 r_1(i,v_1,\hat{v}_2(i))\psi_{\alpha,1}(\theta_1,i) \Big ]  \nonumber \\
\displaystyle{ \psi_{\alpha,1 }(0,i) } &= 1 .
 \end{aligned}
 \right.
\end{equation*}
 Therefore, using It$\hat{{\rm o}}$'s formula,  $\psi_{\alpha,1}$ admits the following representation 
 \begin{equation*}
 \psi_{\alpha,1}(\theta_1,i)= \inf_{v_1\in \mathcal M_1}  E_i^{v_1,\hat{v}_2}\left[ e^{\theta_1\int_{0}^{\infty} e^{-\alpha t} \;r_1(Y(t-), v_1(t,Y(t-)),\hat{v}_2(Y(t-))) dt} \right].
\end{equation*}
Hence  $\psi_{\alpha,1}= \psi^{\hat{v}_2}_{\alpha,1}$. The continuity of $v_2 \to \psi^{v_2}_{\alpha,1}$ follows. Similarly we can show that  $v_1 \to \psi^{v_1}_{\alpha,1}$ is continuous. This completes the proof. 
\end{proof}
\begin{theorem}\label{fixed}
Assume (A1). Then there exist $(v_1^*,v_2^*)\in \mathcal{S}_1 \times \mathcal{S}_2 $ and a pair of functions which are bounded and continuously differentiable $ (\psi^{v_2^*}_{\alpha,1},\psi^{v_1^*}_{\alpha,2} )$ satisfying the following coupled HJB equations
\interdisplaylinepenalty=0
\begin{equation}\label{coupled-hjb}
 \left\{\begin{aligned}
        \alpha \theta_1  \dfrac{d\psi^{v_2^*}_{\alpha,1}}{d\theta_1}(\theta_1,i)&= \inf_{v_1\in V_1}\Big [ \Pi_{v_1,v_2^*(i)} 
\psi^{v_2^*}_{\alpha,1}(\theta_1,i) + \theta_1 r_1(i,v_1,v_2^*(i))\psi^{v_2^*}_{\alpha,1}(\theta_1,i) \Big ] \, \,   \\
&=  \Pi_{v_1^*,v_2^*(i)} 
\psi^{v_2^*}_{\alpha,1}(\theta_1,i) + \theta_1 r_1(i,v_1^*,v_2^*(i))\psi^{v_2^*}_{\alpha,1}(\theta_1,i)   \\
\displaystyle{ \psi^{v_2^*}_{\alpha,1 }(0,i) } &= 1 ,  \\
\alpha \theta_2  \dfrac{d\psi^{v_1^*}_{\alpha,2}}{d\theta_2}(\theta_2,i)&= \inf_{v_2\in V_2}\Big [ \Pi_{v_1^*(i),v_2} 
\psi^{v_1^*}_{\alpha,2}(\theta_2,i) + \theta_2 r_2(i,v_1^*(i),v_2)\psi^{v_1^*}_{\alpha,2}(\theta_2,i) \Big ] \, \,   \\
&=  \Pi_{v_1^*(i),v_2^*} 
\psi^{v_1^*}_{\alpha,2}(\theta_2,i) + \theta_2 r_2(i,v_1^*(i),v_2^*)\psi^{v_1^*}_{\alpha,2}(\theta_2,i)   \\
\displaystyle{ \psi^{v_1^*}_{\alpha,2}(0,i) } &= 1 .
       \end{aligned}
 \right.
\end{equation}
\end{theorem}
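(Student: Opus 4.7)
The plan is to cast the existence of a joint solution of \eqref{coupled-hjb} as a fixed point problem for the best-response correspondence on the compact convex set $\mathcal{S}_1\times\mathcal{S}_2$ and apply the Kakutani--Fan--Glicksberg theorem. For any $(v_1,v_2)\in\mathcal{S}_1\times\mathcal{S}_2$ the individual HJB equations \eqref{HJB1} and \eqref{HJB2} already have unique solutions $\psi^{v_2}_{\alpha,1}$ and $\psi^{v_1}_{\alpha,2}$ in $C_b([0,\Theta]\times S)\cap C^1((0,\Theta)\times S)$ by the single-controller results of Ghosh--Saha and Kumar--Pal, and by Lemma \ref{continuity} these solutions depend continuously on the opponent's stationary strategy.

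Define a set-valued map $\Psi:\mathcal{S}_1\times\mathcal{S}_2\to 2^{\mathcal{S}_1\times\mathcal{S}_2}$ by $\Psi(v_1,v_2)=A_1(v_2)\times A_2(v_1)$, where $A_1(v_2)$ is the set of $\tilde v_1\in\mathcal{S}_1$ such that for every $i\in S$ and every $\theta_1\in(0,\Theta)$, $\tilde v_1(i)$ attains the infimum
\[
\inf_{v_1\in V_1}\Bigl[\Pi_{v_1,v_2(i)}\psi^{v_2}_{\alpha,1}(\theta_1,i)+\theta_1 r_1(i,v_1,v_2(i))\psi^{v_2}_{\alpha,1}(\theta_1,i)\Bigr],
\]
and $A_2(v_1)$ is defined symmetrically. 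A fixed point of $\Psi$ manifestly delivers the required pair $(v_1^*,v_2^*)$ together with $(\psi^{v_2^*}_{\alpha,1},\psi^{v_1^*}_{\alpha,2})$ satisfying \eqref{coupled-hjb}. To apply Kakutani--Fan--Glicksberg one checks: (a) $\mathcal{S}_1\times\mathcal{S}_2$ is convex and compact in the product topology, since each $V_k=\mathcal{P}(U_k)$ is compact convex; (b) $\Psi$ has nonempty values, which follows from the attainment of the infimum on the compact set $V_1$ together with the existence of optimal stationary controls for the single-controller risk-sensitive discounted CTMDP given in Kumar--Pal; (c) $\Psi$ has convex values, because for each $(\theta_1,i)$ the minimand is affine in $v_1\in V_1=\mathcal{P}(U_1)$, so the set of minimizers is a face, hence convex, and this convexity lifts componentwise to $\mathcal{S}_1$; (d) $\Psi$ has closed graph, which reduces to showing that if $v_2^m\to\hat v_2$ and $\tilde v_1^m\in A_1(v_2^m)$ with $\tilde v_1^m\to\tilde v_1^\infty$, then $\tilde v_1^\infty\in A_1(\hat v_2)$. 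For this one uses Lemma \ref{continuity} (to pass $\psi^{v_2^m}_{\alpha,1}\to\psi^{\hat v_2}_{\alpha,1}$) together with the continuity of $\bar\pi_{ij}$ and $\bar r_1$ in $(u_1,u_2)$ supplied by (A1), and invokes Berge's maximum theorem parametrically in $(\theta_1,i)$.

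The main obstacle is verifying the upper semicontinuity in (d): one must transfer limits through both the outer integration against $v_2^m(i)$ and the inner infimum over $v_1\in V_1$, while the value function $\psi^{v_2^m}_{\alpha,1}$ itself is varying with $m$ on the countable state space $S$. The bounds of Lemma \ref{alphabound} give uniform control of $\psi^{v_2^m}_{\alpha,1}$ and its $\theta_1$-derivative, and Arzel\`a--Ascoli combined with Lemma \ref{continuity} supplies uniform convergence on compact subsets of $(0,\Theta)\times S$, so the limit can be pushed inside the infimum. A secondary subtlety is ensuring that the minimizing $\tilde v_1(i)$ can be chosen independently of $\theta_1\in(0,\Theta)$; this is handled exactly by the fact that in the CTMDP theory of Ghosh--Saha and Kumar--Pal the optimal stationary response for player 1 against a fixed $v_2\in\mathcal{S}_2$ is constructed as a measurable selector of the HJB-Hamiltonian that works for all $\theta_1$ simultaneously. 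Once (a)--(d) are in hand, Kakutani--Fan--Glicksberg produces $(v_1^*,v_2^*)\in\Psi(v_1^*,v_2^*)$, which is precisely the conclusion of the theorem.
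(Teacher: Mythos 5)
Your overall route is the paper's route: form the best-response correspondence on the compact convex set $\mathcal{S}_1\times\mathcal{S}_2$, check nonempty convex compact values and upper semicontinuity using Lemmas \ref{alphabound} and \ref{continuity}, and invoke Fan's fixed point theorem. The one place where your write-up genuinely breaks down is the nonemptiness of your map $A_1(v_2)$ as you have defined it, namely requiring a single stationary $\tilde v_1\in\mathcal{S}_1$ whose value $\tilde v_1(i)$ attains
\[
\inf_{v_1\in V_1}\Bigl[\Pi_{v_1,v_2(i)}\psi^{v_2}_{\alpha,1}(\theta_1,i)+\theta_1 r_1(i,v_1,v_2(i))\psi^{v_2}_{\alpha,1}(\theta_1,i)\Bigr]
\]
for \emph{every} $\theta_1\in(0,\Theta)$ simultaneously. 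The justification you give --- that the CTMDP theory of Ghosh--Saha and Kumar--Pal produces an optimal stationary selector ``that works for all $\theta_1$ simultaneously'' --- is not correct. The minimizer depends on $\theta_1$ both through $\psi^{v_2}_{\alpha,1}(\theta_1,\cdot)$ and through the explicit factor $\theta_1$ multiplying the running cost; the optimal response supplied by the single-controller theory is the Markov (not stationary) strategy $v_1^*(\theta_1 e^{-\alpha t},i)$, i.e.\ a $\theta$-dependent selector. This is exactly the point of Remark 3.1(ii) in the paper and the reason Section 3.2 has to introduce the class of eventually stationary strategies $\hat{\mathcal{S}}_k$ before a discounted Nash equilibrium can be produced. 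With your definition, $A_1(v_2)$ may be empty, and Kakutani--Fan--Glicksberg cannot be applied.

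The repair is what the paper actually does: fix $(\theta_1,\theta_2)$ (and $\alpha$) once and for all, define $H(v_2)$ as the set of $v_1^*\in\mathcal{S}_1$ attaining the infimum at that fixed $\theta_1$ pointwise in $i\in S$ (so the fixed point $(v_1^*,v_2^*)$ depends on $(\theta_1,\theta_2)$, a dependence the paper suppresses notationally), and similarly for $H(v_1)$. Nonemptiness then follows from compactness of $V_1$ and continuity of the minimand (a measurable selection argument \`a la Bene\v{s}), convexity from affineness in $v_1\in\mathcal{P}(U_1)$, and closedness/upper semicontinuity from the computation you sketch --- the paper does this directly with Lemmas \ref{alphabound} and \ref{continuity} rather than through Berge's theorem, using (A1) to pass to the limit in the series $\sum_{j}\bar\pi_{ij}(u)\psi^{v_2^m}_{\alpha,1}(\theta_1,j)$. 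With that reading, the second equalities in \eqref{coupled-hjb} hold at the fixed $\theta_1$ and $\theta_2$ respectively (first equalities hold for all $\theta$ since $\psi^{v_2^*}_{\alpha,1}$ solves \eqref{HJB1}), which is the content of Theorem \ref{fixed}; the rest of your argument then matches the paper's proof.
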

\begin{proof}
Let $v_2 \in \mathcal{S}_2$. For $i\in S, \, v_1 \in V_1$, set
$$F_1(i,v_1,v_2(i))=\Pi_{v_1,v_2(i)} 
\psi^{v_2}_{\alpha,1}(\theta_1,i) +\theta_1 r_1(i,v_1,v_2(i))\psi^{v_2}_{\alpha,1}(\theta_1,i),$$ where $\psi^{v_2}_{\alpha,1}$ is the solution of the equation (\ref{HJB1}).
Let $$H(v_2)=\Big \{ v_1^* \in \mathcal{S}_1 \Big |  F_1(i,v_1^*(i),v_2(i))=\inf_{v_1 \in V_1} F_1(i,v_1,v_2(i)) \, \mbox{for all} \, \, i \in S \Big \}.$$
Obviously $v_1^*$ depends on $\theta_1$ and $\alpha$. We suppress  this dependence for notational simplicity.
Then by a standard  measurable selection theorem (see Bene$\check{s}$ \cite{Benes}), $H(v_2)$ is a non empty subset of $\mathcal{S}_1$. Clearly $H(v_2)$ is convex. It is easy to show that $H(v_2)$ is closed and hence compact. 

Similarly for $i\in S$, $v_1 \in \mathcal{S}_1$ and $v_2 \in V_2$, set
$$F_2(i,v_1(i),v_2)=\Pi_{v_1(i),v_2} 
\psi^{v_1}_{\alpha,2}(\theta_2,i) +\theta_2 r_2(i,v_1(i),v_2)\psi^{v_1}_{\alpha,2}(\theta_2,i),$$  where $\psi^{v_1}_{\alpha,2}$ is the solution of the equation (\ref{HJB2}).
Let $$H(v_1)=\Big \{ v_2^* \in \mathcal{S}_2 \Big |  F_2(i,v_1(i),v_2^*(i))=\inf_{v_2 \in V_2} F_2(i,v_1(i),v_2)  \, \mbox{for all} \, \, i \in S  \Big \}.$$
Then, as before, $H(v_1)$ is convex and compact.\\
Define $$H(v_1,v_2)=H(v_2) \times H(v_1).$$
Then $ H(v_1,v_2)$ is nonempty, convex, and compact subset of $\mathcal{S}_1\times \mathcal{S}_2$ . Thus $$(v_1,v_2) \to H(v_1,v_2)$$ defines a point to set map from $\mathcal{S}_1\times \mathcal{S}_2$ to $2^{\mathcal{S}_1\times \mathcal{S}_2}$. Next we want to show that this map is upper semicontinuous. 
Let $\{(v_1^m,v_2^m)\}\in \mathcal{S}_1\times \mathcal{S}_2$ and $(v_1^m,v_2^m) \to (\hat{v_1},\hat{v}_2)$ in $\mathcal{S}_1\times \mathcal{S}_2$, i.e., for each $i \in S, \, (v_1^m(i),v_2^m(i)) \to (\hat{v_1}(i),\hat{v}_2(i)) \, \, \mbox{in} \, \, V_1\times V_2  $. Let $\bar{v}_1^m \in H(v_2^m)$. Then $\{\bar{v}_1^m \}\subset \mathcal{S}_1$. Since $\mathcal{S}_1$ is compact, it has a convergent subsequence, denoted by the same sequence with an abuse of notation, such that $$\bar{v}_1^m \to \bar{v}_1 \; \; \mbox{in} \; \;\mathcal{S}_1.$$ Then $(\bar{v}_1^m, v_2^m) \to (\bar{v}_1, \hat{v}_2) \, \, \mbox{in} \, \, \mathcal{S}_1\times \mathcal{S}_2 $ . Now using Lemmata \ref{alphabound} and \ref{continuity}, we obtain 
\interdisplaylinepenalty=0
\begin{eqnarray}
F_1(i,\bar{v}_1(i),\hat{v}_2(i))
&=&\int_{U_1}\int_{U_2}\bar{\Pi}_{u_1,u_2} \bar{v}_1(i)(du_1)\hat{v}_2(i)(du_2)
\psi^{\hat{v}_2}_{\alpha,1}(\theta_1,i) \nonumber \\ 
&+&\theta_1 \int_{U_1}\int_{U_2} \bar{r}_1(i,u_1,u_2)\bar{v}_1(i)(du_1)\hat{v}_2(i)(du_2) \psi^{\hat{v}_2}_{\alpha,1}(\theta_1,i) \nonumber \\
&=&\sum_{j \in S}\int_{U_1}\int_{U_2}\bar{\pi}_{ij}(u_1,u_2)\psi^{\hat{v}_2}_{\alpha,1}(\theta_1,j) \bar{v}_1(i)(du_1)\hat{v}_2(i)(du_2)
 \nonumber \\ 
&+&\theta_1 \int_{U_1}\int_{U_2} \bar{r}_1(i,u_1,u_2)\bar{v}_1(i)(du_1)\hat{v}_2(i)(du_2) \psi^{\hat{v}_2}_{\alpha,1}(\theta_1,i) \nonumber \\
&=&\lim_{m\to \infty} \sum_{j \in S}\int_{U_1}\int_{U_2}\bar{\pi}_{ij}(u_1,u_2)\psi^{v_2^m}_{\alpha,1}(\theta_1,j) \bar{v}_1^m(i)(du_1) v_2^m(i)(du_2)
 \nonumber \\ 
&+&\theta_1 \lim_{m\to \infty} \int_{U_1}\int_{U_2} \bar{r}_1(i,u_1,u_2)\bar{v}_1^m(du_1)v_2^m(i)(du_2) \psi^{v_2^m}_{\alpha,1}(\theta_1,i) \nonumber \\
&=&\lim_{m\to \infty}F_1(i,\bar{v}_1^m(i),v_2^m(i)).
\end{eqnarray}
Now fix $\tilde{v}_1 \in \mathcal{S}_1$ and consider the sequence $(\tilde{v}_1,v_2^m)$. Repeat the above argument to conclude that
\begin{eqnarray}
F_1(i,\tilde{v}_1(i),\hat{v}_2(i))
=\lim_{m\to \infty}F_1(i,\tilde{v}_1(i),v_2^m(i)). 
\end{eqnarray}
Using the fact $\bar{v}_1^m \in H(v_2^m)$, we have 
$$ F_1(i,\tilde{v}_1(i),v_2^m(i)) \geq F_1(i,\bar{v}_1^m(i),v_2^m(i)) \; \mbox{for all}\; m. $$
Thus $$F_1(i,\tilde{v}_1(i),\hat{v}_2(i)) \geq F_1(i,\bar{v}_1(i),\hat{v}_2(i))\; \mbox{for all}\; \tilde{v}_1 \in \mathcal{S}_1. $$
Therefore $\bar{v}_1 \in H(\hat{v}_2)$. Let $\bar{v}_2^m \in H(v_1^m)$ and along a subsequence  $$\bar{v}_2^m \to \bar{v}_2 \; \; \mbox{in} \; \;\mathcal{S}_2.$$ Using analogous arguments, we obtain  $\bar{v}_2 \in H(\hat{v}_1)$. The upper semicontinuity of the map $$(v_1,v_2) \to H(v_1,v_2)$$ follows. By Fan's fixed point theorem \cite{Fan}, there exists $(v_1^*, v_2^*) \in \mathcal{S}_1 \times \mathcal{S}_2$ such that $(v_1^*, v_2^*) \in H(v_1^*, v_2^*)$.  This establishes that there exist $(v_1^*,v_2^*)\in \mathcal{S}_1 \times \mathcal{S}_2 $  and $ (\psi^{v_2^*}_{\alpha,1},\psi^{v_1^*}_{\alpha,2} )$ satisfy the coupled HJB equations (\ref{coupled-hjb}). 
\end{proof}
\begin{remark}
 (i) Note that $v_1^*$ and $v_2^*$ depend on $\theta_1$ and $\theta_2$ respectively. Hence $ \psi^{v_2^*}_{\alpha,1}$ (resp. $\psi^{v_1^*}_{\alpha,2} $) depend both on $\theta_1$ and $\theta_2$. Thus for each $\theta_2$, $ \psi^{v_2^*}_{\alpha,1}$  is continuously differentiable with respect to $\theta_1$  and for each $\theta_1$, $ \psi^{v_1^*}_{\alpha,2}$  is continuously differentiable with respect to $\theta_2$. We have suppressed this dependence on $\theta_2$ (resp. on $\theta_1$) for notational convenience.\\
 (ii) Note that for the discounted cost criterion the corresponding coupled HJB equations are given by (\ref{coupled-hjb}). However, the pair of stationary strategies $(v_1^*,v_2^*)$ does not constitute a Nash equilibrium for this criterion. If player 1 announces his strategy $v_1^*$ then the optimal response for player 2 for the discounted criterion is given by the the Markov strategy $v_2^*(\theta_2 e^{-\alpha t},i)$. An analogous statement holds for optimal response of player 1. Thus the existence of a pair of Nash equilibrium in Markov strategies for the discounted cost criterion needs further analysis which we carry out in the next subsection.
 \end{remark}
 \subsection{Existence of Nash Equilibrium for Discounted Cost Criterion} In this subsection we establish the existence of a Nash equilibrium under the following additive structure on $\bar{\pi}_{u_1,u_2}$ and $\bar{r}_k(i,u_1,u_2)$.
 
  \noindent {\bf (A2)} We assume that $\bar{\pi}_{ij} :  U_1 \times U_2 \to \mathbb{R}$
and $\bar{r}_k : S \times U_1 \times U_2 \to [0, \ \infty), \ k =1,2$ are given by
\begin{eqnarray*}
\bar{\pi}_{ij}( u_1, u_2) & = & \bar{\pi}_{ij}^1( u_1) + \bar{\pi}_{ij}^2(u_2), \\
\bar{r}_k (i, u_1, u_2) & = & \bar{r}_{k1}(i, u_1) + \bar{r}_{k2}(i, u_2), i \in S, u_1 \in U_1,
u_2 \in U_2, k =1,2,
\end{eqnarray*}
 where $\bar{\pi}_{ij}^k :  U_k  \to \mathbb{R}$ assumed to be continuous; 
\[
\sup_{i,j \in S, u \in U} |\bar{\pi}_{ij}^k(u)|:=M<\infty  \, .
\]
$\bar{r}_{kl} : S \times U_l  \to [0, \ \infty), \ k,l =1,2$, assumed to be bounded and continuous.

These type of additive structure are rather standard in non-zero sum stochastic differential games (see, e.g., Borkar and Ghosh \cite{Borkar-Ghosh}) and non-zero sum stochastic games on an uncountable state space by Himmelberg et al. \cite{Himmelberg}. In fact in stochastic games these conditions are referred to as ARAT (additive reward, additive transition).

Now we define a class of strategies to be referred to as eventually stationary strategies denoted by $\hat{\mathcal S}_k,\; k=1,2$. Let 
\[
\hat{\mathcal S}_k \ = \ \{ \hat{v}_k : (0, \Theta) \times S \to V_k \,  | \hat{v}_k(\cdot,i) \ {\rm is\ measurable\ for\ each}\; i\in S \}, \ 
k=1,2.
\]
Note that as opposed to $\mathcal{S}_k$, the topology of pointwise convergence on $\hat{\mathcal{S}}_k$ is not metrizable. Thus we endow the space $\hat{\mathcal{S}}_k$ with
 the weak* topology on $L^\infty ((0, \Theta) \times S, {\mathcal M}_s(U_k)), 
\ k=1,2$, introduced by Warga \cite{Warga} for the topology of relaxed controls, where $\mathcal{M}_s(U_k)$ is the space of all finite signed measure on $U_k$ endowed with the topology of weak convergence. Note that with the above topology, $\hat{\mathcal S}_k$ becomes a compact metrizable space with following convergence criterion:\\
For $k =1,2$, 
$\hat{v}^n_k \to \hat{v}_k$ in $\hat{\mathcal S}_k$ as $n \to \infty$  if and only if for each $i \in S$
\begin{equation}\label{convergencecriterion1}
\lim_{n \to \infty} \int_0^\Theta f(\theta) \int_{U_k} g(\theta, u_k) \hat{v}^n_k(\theta,i)(du_k) d \theta  
\ = \  \int_0^\Theta f(\theta) \int_{U_k} g(\theta, u_k) \hat{v}_k(\theta,i)(du_k) d \theta  ,
\end{equation}
for all $f \in L^1(0, \Theta) \cap L^2(0, \Theta)=L^2(0, \Theta)  , 
\ g \in C_b((0, \Theta) \times U_k)$. The Markov strategies associated with 
$\hat{v}_k \in \hat{\mathcal S}_k, k=1,2$ is given by $\hat{v}_k(\theta e^{-\alpha t}, Y(t-)), t \geq 0$ for each 
$\theta \in (0, \Theta)$ and $\alpha > 0$, where $Y(t)$ is the solution of the equation
\begin{equation}\label{cmc21}
d Y(t) \ = \ \int_{\mathbb{R}} h(Y(t-), \hat{v}_1(\theta e^{-\alpha t}, Y(t-)), \hat{v}_2(\theta e^{-\alpha t}, Y(t-)), z) \wp(dz dt) .
\end{equation}
By an abuse of notation, we represent the eventually stationary Markov strategies by elements of 
$\hat{\mathcal S}_k$ though each member in $\hat{\mathcal S}_k$ corresponds to a family
of Markov strategies indexed by $\theta$ and $\alpha$. Note that as $t \to \infty, \; e^{-\alpha t} \to 0$. Thus in the long run an element of $\hat{\mathcal S}_k$ ``eventually" becomes an element of $\mathcal S_k$. Hence the terminology.

  Define
for $\hat{v}_k \in \hat{\mathcal S}_k, \ k =1,2$ 
\begin{eqnarray}\label{valuefunctions}
\tilde{\psi}^{\hat{v}_2}_{\alpha, 1}(\theta_1, i) & = & \inf_{\tilde{v}_1 \in {\mathcal M}_1} J^{\tilde{v}_1, \hat{v}_2}_{\alpha, 1}
(\theta_1, i), \ \theta_1, \in (0, \ \Theta), \ i \in S \nonumber \\
\tilde{\psi}^{\hat{v}_1}_{\alpha, 2}(\theta_2, i) & = & \inf_{\tilde{v}_2 \in {\mathcal M}_2} J^{\hat{v}_1, \tilde{v}_2}_{\alpha, 2}
(\theta_2, i), \  \theta_2, \in (0, \ \Theta), \ i \in S.
\end{eqnarray}
By using similar arguments as in previous subsection it follows that $\tilde{\psi}^{\hat{v}_2}_{\alpha, 1}$ is a bounded and absolutely continuous function satisfying the the following equation 
\begin{equation}\label{discountedhjb11}
 \left\{\begin{aligned}
\alpha \theta_1  \dfrac{d\tilde{\psi}^{\hat{v}_2}_{\alpha, 1}}{d\theta_1}(\theta_1,i)&= \inf_{v_1\in V_1}\Big [ \Pi_{v_1,\hat{v}_2(\theta_1, i)} 
\tilde{\psi}^{\hat{v}_2}_{\alpha, 1}(\theta_1,i) + \theta_1 r_1(i,v_1,\hat{v}_2(\theta_1, i))\tilde{\psi}^{\hat{v}_2}_{\alpha, 1}(\theta_1,i) \Big ]   \\
\displaystyle{ \tilde{\psi}^{\hat{v}_2}_{\alpha, 1}(0,i) } &= 1 ,
 \end{aligned}
 \right.
\end{equation}
 and $\tilde{\psi}^{\hat{v}_1}_{\alpha, 2}$ is a bounded and absolutely continuous function satisfying the the following equation  
 \begin{equation}\label{discountedhjb22}
 \left\{\begin{aligned}
\alpha \theta_2  \dfrac{d\tilde{\psi}^{\hat{v}_1}_{\alpha, 2}}{d\theta_2}(\theta_2,i)&= \inf_{v_2\in V_2}\Big [ \Pi_{\hat{v}_1(\theta_2,i),v_2} 
\tilde{\psi}^{\hat{v}_1}_{\alpha, 2}(\theta_2,i) + \theta_2 r_2(i,\hat{v}_1(\theta_2,i),v_2)\tilde{\psi}^{\hat{v}_1}_{\alpha, 2}(\theta_2,i) \Big ].   \\
\displaystyle{ \tilde{\psi}^{\hat{v}_1}_{\alpha, 2}(0,i) } &= 1 .
 \end{aligned}
 \right.
\end{equation}
As before we can establish the following result, we omit the details.
\begin{lemma} \label{alphabound1}
Assume (A1). Then for $\theta \in (0, \Theta)$ and $\alpha > 0$ and $\hat{v}_k \in \hat{\mathcal S}_k, k =1,2$, we have
\begin{eqnarray}\label{eq1estimates}
1 \leq \max \{ \tilde{\psi}^{\hat{v}_2}_{\alpha, 1} (\theta, i) , \tilde{\psi}^{\hat{v}_1}_{\alpha, 2} (\theta, i) \} 
\leq \ \max_{k=1,2} \{ e^{\frac{\theta \|r_k\|_{\infty}}{\alpha}}\}, & \\ \nonumber 
\max \{ \|\frac{d \tilde{\psi}^{\hat{v}_2}_{\alpha, 1}}{d \theta} \|_{\infty}, 
\|\frac{d \tilde{\psi}^{\hat{v}_1}_{\alpha, 2}}{d \theta} \|_{\infty} \} 
 \ \leq \ \max_{k=1,2} \{\frac{\|r_k\|_{\infty}}{\alpha} e^{\frac{\Theta \| r_k\|_{\infty}}{\alpha}}\} . & 
\end{eqnarray}

\end{lemma}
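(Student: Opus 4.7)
The plan is to imitate the proof of Lemma 3.1 (\ref{alphabound}), the only difference being that the second player's strategy is now eventually stationary rather than stationary. Since the well-posedness of \eqref{cmc21} and the representation \eqref{valuefunctions} are in place, the argument is structurally identical. I would treat only $\tilde{\psi}^{\hat{v}_2}_{\alpha,1}$ explicitly; the bounds for $\tilde{\psi}^{\hat{v}_1}_{\alpha,2}$ are obtained by symmetric reasoning.

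For the first pair of inequalities, fix $\hat{v}_2\in\hat{\mathcal{S}}_2$, $\theta\in(0,\Theta)$ and $i\in S$. Since $r_1\geq 0$ and $e^{-\alpha t}>0$, for every admissible $\tilde{v}_1\in\mathcal{M}_1$ the integrand inside the exponential in \eqref{discountedcost1} is nonnegative, so the exponential is bounded below by $1$ and above by $\exp\bigl(\theta\|r_1\|_\infty\int_0^\infty e^{-\alpha t}dt\bigr)=\exp(\theta\|r_1\|_\infty/\alpha)$. Taking expectations and then the infimum over $\tilde{v}_1\in\mathcal{M}_1$ in \eqref{valuefunctions} yields
\[
1\ \leq\ \tilde{\psi}^{\hat{v}_2}_{\alpha,1}(\theta,i)\ \leq\ e^{\theta\|r_1\|_\infty/\alpha},
\]
which combined with the analogous estimate for $\tilde{\psi}^{\hat{v}_1}_{\alpha,2}$ gives the stated double inequality.

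For the derivative bounds, I would mirror the computation done for Lemma \ref{alphabound}. Given $(\tilde{v}_1,\hat{v}_2)\in\mathcal{M}_1\times\hat{\mathcal{S}}_2$, set
\[
F^1_\alpha(i,\tilde{v}_1,\hat{v}_2)=\int_0^\infty e^{-\alpha t}r_1\bigl(Y(t-),\tilde{v}_1(t,Y(t-)),\hat{v}_2(\theta e^{-\alpha t},Y(t-))\bigr)dt,
\]
and $G^1_\alpha(\theta,i,\tilde{v}_1,\hat{v}_2)=E_i^{\tilde{v}_1,\hat{v}_2}[\exp(\theta F^1_\alpha)]$. Note that $0\leq F^1_\alpha\leq \|r_1\|_\infty/\alpha$ almost surely. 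Differentiating under the expectation (justified by dominated convergence since $F^1_\alpha e^{\theta F^1_\alpha}$ is uniformly bounded for $\theta\in(0,\Theta)$), one gets
\[
\frac{\partial G^1_\alpha}{\partial\theta}=E_i^{\tilde{v}_1,\hat{v}_2}\bigl[F^1_\alpha e^{\theta F^1_\alpha}\bigr]\ \leq\ \frac{\|r_1\|_\infty}{\alpha}e^{\Theta\|r_1\|_\infty/\alpha}.
\]
By the mean value theorem this translates into the Lipschitz bound
\[
|G^1_\alpha(\theta+\epsilon,\cdot)-G^1_\alpha(\theta,\cdot)|\ \leq\ |\epsilon|\,\frac{\|r_1\|_\infty}{\alpha}e^{\Theta\|r_1\|_\infty/\alpha},
\]
and taking the infimum over $\tilde{v}_1\in\mathcal{M}_1$ preserves this Lipschitz constant, yielding the asserted bound on $\|d\tilde{\psi}^{\hat{v}_2}_{\alpha,1}/d\theta\|_\infty$. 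The symmetric argument for $\tilde{\psi}^{\hat{v}_1}_{\alpha,2}$ completes the proof.

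I do not expect any genuine obstacle here; the only subtlety is the measurability and integrability needed to differentiate $G^1_\alpha$ under the expectation, but this is immediate because $F^1_\alpha$ is a bounded random variable uniformly in the admissible controls. The eventual-stationarity of $\hat{v}_2$ plays no role in these pointwise estimates, which is why the bounds from Lemma \ref{alphabound} carry over verbatim and the details can legitimately be omitted as the authors indicate.
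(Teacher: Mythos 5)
Your first set of bounds is fine and is exactly the paper's Lemma \ref{alphabound} argument: for any admissible pair the exponent lies in $[0,\theta\|r_1\|_\infty/\alpha]$, and the bounds survive the infimum. The problem is the derivative estimate, and it is precisely where you assert that ``the eventual-stationarity of $\hat{v}_2$ plays no role.'' In the cost $J^{\tilde{v}_1,\hat{v}_2}_{\alpha,1}(\theta,i)$ the second player uses the Markov strategy $\hat{v}_2(\theta e^{-\alpha t},Y(t-))$ built from the \emph{same} $\theta$ at which you evaluate the value function (this is forced by the form of the HJB equation (\ref{discountedhjb11}), where $\hat{v}_2$ is evaluated at the running variable $\theta_1$). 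Consequently both your $F^1_\alpha$ (you wrote $\hat{v}_2(\theta e^{-\alpha t},Y(t-))$ inside it) and the law of $Y$ under $E_i^{\tilde{v}_1,\hat{v}_2}$ depend on $\theta$, and they do so only measurably, since $\hat{v}_2(\cdot,i)$ is merely measurable in its first argument. Hence the identity $\partial_\theta G^1_\alpha=E[F^1_\alpha e^{\theta F^1_\alpha}]$ is not a legitimate differentiation under the expectation (it ignores the variation of the strategy and of the law), the map $\theta\mapsto G^1_\alpha(\theta,i,\tilde{v}_1,\hat{v}_2)$ need not be differentiable at all, and the mean-value/Lipschitz step collapses. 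This is exactly the difference between the stationary setting of Lemma \ref{alphabound}, where the law of $Y$ is independent of $\theta$, and the eventually stationary one; it is also why the paper only claims $\tilde{\psi}^{\hat{v}_2}_{\alpha,1}$ to be absolutely continuous rather than $C^1$.

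A repair is available but it is not the verbatim argument: use the equation (\ref{discountedhjb11}) itself (equivalently its integral form), write $\Pi_{v_1,\hat{v}_2(\theta,i)}\tilde{\psi}(\theta,i)=\sum_j\pi_{ij}(\cdot)\big(\tilde{\psi}(\theta,j)-1\big)$ by conservativeness, and combine $|\tilde{\psi}(\theta,j)-1|\le e^{\theta\|r_1\|_\infty/\alpha}-1\le \frac{\theta\|r_1\|_\infty}{\alpha}e^{\theta\|r_1\|_\infty/\alpha}$ with (A1) to get the a.e.\ bound $\big|\frac{d\tilde{\psi}^{\hat{v}_2}_{\alpha,1}}{d\theta}\big|\le \frac{\|r_1\|_\infty}{\alpha}e^{\Theta\|r_1\|_\infty/\alpha}\big(1+\frac{2M}{\alpha}\big)$, uniformly in $\hat{v}_2$ and $i$. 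This gives a slightly worse constant than the one displayed in (\ref{eq1estimates}), but a uniform-in-strategy Lipschitz bound is all that is used later (equicontinuity in Lemma \ref{continuity1} and the fixed-point argument). In fairness, the paper omits the proof with ``as before,'' so your proposal reproduces the intended argument; but as written the differentiation step would fail, and the clean constant in the statement appears to come only from that illegitimate step.
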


\begin{lemma}\label{continuity1} Assume (A1). Then the maps $ \hat{v}_1 \mapsto \tilde{\psi}^{\hat{v}_1}_{\alpha, 2} $ 
from $\hat{\mathcal S}_1 \to C^{1}([0, \Theta]  \times S)$  
and $ \hat{v}_2 \mapsto \tilde{\psi}^{\hat{v}_2}_{\alpha, 1} $
 from $\hat{\mathcal S}_2  \to C^{1}([0, \Theta]  \times S)$ are continuous.
\end{lemma}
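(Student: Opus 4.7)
The plan is to prove continuity of $\hat{v}_2 \mapsto \tilde{\psi}^{\hat{v}_2}_{\alpha,1}$; the other direction is symmetric. Let $\hat{v}_2^n \to \hat{v}_2$ in $\hat{\mathcal S}_2$ in the sense of (\ref{convergencecriterion1}). By Lemma \ref{alphabound1} the functions $\{\tilde{\psi}^{\hat{v}_2^n}_{\alpha,1}(\cdot,i)\}_n$ together with their $\theta$-derivatives are bounded uniformly in $n$ and $i$. Arzel\`a--Ascoli together with a diagonal extraction over $i \in S$ then produces a subsequence (still indexed by $n$) and a limit $\psi_{\alpha,1} \in C_b([0,\Theta]\times S)$ with $\tilde{\psi}^{\hat{v}_2^n}_{\alpha,1} \to \psi_{\alpha,1}$ pointwise in $i$ and uniformly in $\theta \in [0,\Theta]$.

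Next I would identify this limit with $\tilde{\psi}^{\hat{v}_2}_{\alpha,1}$ by passing to the limit in the distributional form of (\ref{discountedhjb11}) obtained by pairing with an arbitrary $\varphi \in C_c^\infty(0,\Theta)$. The ARAT structure (A2) is essential here: it decouples the Hamiltonian into a $v_1$-only piece (through $\bar\pi^1_{ij}, \bar r_{11}$) plus a piece depending \emph{linearly} on $\hat{v}_2^n$ (through $\bar\pi^2_{ij}, \bar r_{12}$). For the $v_1$-only infimum, uniform convergence of $\tilde{\psi}^{\hat{v}_2^n}_{\alpha,1}$, compactness of $V_1$, continuity of the integrands, and the summability $\sum_{j\neq i}\bar\pi^1_{ij}(u_1) \leq M$ inherited from (A1)--(A2) deliver pointwise-in-$\theta$ convergence of the infimum, which bounded convergence carries across the pairing with $\varphi$. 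For the $\hat{v}_2^n$-linear piece, write the integrand as
\[
g_n(\theta,u_2) \;=\; \sum_{j \in S} \bar\pi^2_{ij}(u_2)\,\tilde{\psi}^{\hat{v}_2^n}_{\alpha,1}(\theta,j) \;+\; \theta\,\bar r_{12}(i,u_2)\,\tilde{\psi}^{\hat{v}_2^n}_{\alpha,1}(\theta,i),
\]
use the same summability to establish $g_n \to g$ uniformly on $(0,\Theta)\times U_2$, and split
\[
\int_0^\Theta\!\varphi\!\int_{U_2}\! g_n\,\hat{v}_2^n(\theta,i)(du_2)\,d\theta \;=\; \int_0^\Theta\!\varphi\!\int_{U_2}\!(g_n-g)\,\hat{v}_2^n(\theta,i)(du_2)\,d\theta \;+\; \int_0^\Theta\!\varphi\!\int_{U_2}\! g\,\hat{v}_2^n(\theta,i)(du_2)\,d\theta.
\]
The first integral vanishes by $g_n \to g$ uniformly and $\hat{v}_2^n(\theta,i)$ being a probability measure, while the second converges by (\ref{convergencecriterion1}). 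Hence $\psi_{\alpha,1}$ solves (\ref{discountedhjb11}) with $\hat{v}_2$ distributionally; continuity of the right-hand side upgrades the equation to hold classically, and uniqueness inherent in the Feynman--Kac representation (\ref{valuefunctions}) forces $\psi_{\alpha,1} = \tilde{\psi}^{\hat{v}_2}_{\alpha,1}$. A standard subsequence-of-subsequence argument then upgrades convergence to the full sequence.

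The $C^1$ convergence on $[0,\Theta]$ follows directly by reading the derivative off (\ref{discountedhjb11}): $\alpha\theta\,\tfrac{d}{d\theta}\tilde{\psi}^{\hat{v}_2^n}_{\alpha,1}$ equals the Hamiltonian, which converges uniformly by the work above, so $\tfrac{d}{d\theta}\tilde{\psi}^{\hat{v}_2^n}_{\alpha,1}$ converges uniformly on $[\delta,\Theta]$ for every $\delta > 0$; the uniform derivative bound from Lemma \ref{alphabound1} together with the normalization $\tilde{\psi}^{\hat{v}_2^n}_{\alpha,1}(0,i) = 1$ controls the behavior near $\theta = 0$. The main obstacle is the linearization step: weak* convergence per (\ref{convergencecriterion1}) is tested only against fixed separable integrands $f \otimes g$, so one must first replace the $n$-dependent $g_n$ by the fixed $g$ via uniform approximation before invoking it. This replacement is available precisely because (A2) makes the Hamiltonian affine in $\hat{v}_2^n$; without (A2), the nonlinear dependence on $\hat{v}_2^n$ would obstruct passage to the limit and the argument would fail.
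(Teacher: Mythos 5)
The paper offers no proof of this lemma at all --- it is presented as a routine analogue of Lemma \ref{continuity} --- so the comparison is with that implicit argument. Your main body is in fact a more careful version of it: the uniform bounds of Lemma \ref{alphabound1} plus Arzel\`a--Ascoli and a diagonal argument, passage to the limit in the weak form of (\ref{discountedhjb11}) against $\varphi\in C^{\infty}_c(0,\Theta)$, and identification of the limit through the representation (\ref{valuefunctions}) is exactly the intended route. Your two refinements are genuine improvements: you notice that the Warga weak* convergence (\ref{convergencecriterion1}) cannot be pushed through the nonlinear infimum over $v_1$, so you invoke the ARAT structure (A2) to split off the $\hat{v}_2$-dependence as an affine term, and you replace the $n$-dependent integrand $g_n$ by its uniform limit $g$ before testing against (\ref{convergencecriterion1}), which only sees fixed integrands. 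Be aware, however, that the lemma as printed assumes only (A1) while your proof uses (A2); your remark that the argument collapses without (A2) is essentially right (an oscillating sequence $\hat{v}^n_2$ whose weak* limit is an averaged strategy makes the limit value function solve an equation with the averaged Hamiltonian, which by concavity of the infimum lies below the Hamiltonian of the limit strategy), and the paper itself quietly invokes (A2) for precisely this purpose in Lemma \ref{usc1}. So you are proving the lemma under (A1)--(A2), not (A1) alone, and you should say so explicitly.

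The one genuine gap is your final paragraph. You assert that the Hamiltonian ``converges uniformly by the work above'' and deduce uniform convergence of $d\tilde{\psi}^{\hat{v}^n_2}_{\alpha,1}/d\theta_1$ on $[\delta,\Theta]$. But your own argument gives uniform convergence only for the $v_1$-infimum part; the $\hat{v}^n_2$-affine part was controlled solely after pairing with test functions, i.e.\ it converges weakly in $L^2(0,\Theta)$ (which is also all that the proof of Lemma \ref{usc1} claims), and it cannot converge uniformly or even pointwise when $\hat{v}^n_2$ oscillates: the derivative of $\tilde{\psi}^{\hat{v}^n_2}_{\alpha,1}$ then oscillates between distinct values while the derivative of $\tilde{\psi}^{\hat{v}_2}_{\alpha,1}$ is their average. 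Hence what you have actually proved is continuity into $C_b([0,\Theta]\times S)$ (uniform in $\theta$, pointwise in $i$) together with weak $L^2$ convergence of the derivatives, not continuity into $C^1$ in the uniform-$C^1$ sense; and this defect is not repairable, since with the weak* topology on $\hat{\mathcal S}_2$ the literal $C^1$-continuity fails for the same oscillation reason. Since the downstream use in Lemma \ref{usc1} needs exactly the $C_b$-continuity plus weak convergence of the Hamiltonian terms that you did establish, the right fix is to weaken the stated target space (or the topology on the derivative) rather than to strengthen the last step.
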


Let $\hat{v}_2 \in \hat{\mathcal{S}}_2$. For $i\in S, \, v_1 \in V_1$ and  $\theta_1 \in (0, \Theta)$, set
$$\tilde{F}_1(i,v_1,\hat{v}_2(\theta_1,i))=\Pi_{v_1,\hat{v}_2(\theta_1,i)} 
\tilde{\psi}^{\hat{v}_2}_{\alpha,1}(\theta_1,i) +\theta_1 r_1(i,v_1,\hat{v}_2(\theta_1,i)))\tilde{\psi}^{\hat{v}_2}_{\alpha,1}(\theta_1,i).$$
Let $$\tilde{H}(\hat{v}_2)=\Big \{ \hat{v}_1^* \in \hat{\mathcal{S}}_1 \Big |  \tilde{F}_1(i,\hat{v}_1^*(\theta_1,i),\hat{v}_2(\theta_1,i))=\inf_{v_1 \in V_1} \tilde{F}_1(i,v_1,\hat{v}_2(\theta_1,i)) \  \mbox{for all} \, \, i \in S \Big \}.$$

Similarly for $i\in S$, $\hat{v}_1 \in \hat{\mathcal{S}}_1$, $v_2 \in V_2$ $\theta_2 \in (0, \Theta)$, set
$$\tilde{F}_2(i,\hat{v}_1(\theta_2,i),v_2)=\Pi_{\hat{v}_1(\theta_2,i),v_2} 
\tilde{\psi}^{\hat{v}_1}_{\alpha,2}(\theta_2,i) +\theta_2 r_2(i,\hat{v}_1(\theta_2,i),v_2)\tilde{\psi}^{\hat{v}_1}_{\alpha,2}(\theta_2,i),$$
and $$\tilde{H}(\hat{v}_1)=\Big \{\hat{v}_2^* \in \hat{\mathcal{S}}_2 \Big |  \tilde{F}_2(i,\hat{v}_1(\theta_2,i),\hat{v}_2^*(\theta_2,i))=\inf_{v_2 \in V_2} \tilde{F}_2(i,\hat{v}_1(\theta_2,i),v_2) \   \, \mbox{for all} \, \, i \in S  \Big \}.$$
Define $$\tilde{H}(\hat{v}_1,\hat{v}_2)=\tilde{H}(\hat{v}_2) \times \tilde{H}(\hat{v}_1).$$
Then using arguments as in Theorem \ref{fixed}, it follows that $ \tilde{H}(\hat{v}_1,\hat{v}_2)$ is nonempty, convex, and compact subset of $\hat{\mathcal{S}}_1\times \hat{\mathcal{S}}_2$. Therefore $(\hat{v}_1,\hat{v}_2) \mapsto \tilde{H}(\hat{v}_1,\hat{v}_2) $ defines a map from $\hat{\mathcal S}_1 \times \hat{\mathcal S}_2 \to 
2^{\hat{\mathcal S}_1} \times 2^{\hat{\mathcal S}_2}$. Now we establish the following result.

\begin{lemma}\label{usc1} Assume (A1) and (A2). Then the map 
$(\hat{v}_1,\hat{v}_2) \mapsto \tilde{H}(\hat{v}_1,\hat{v}_2) $  is upper semicontinuous.
\end{lemma}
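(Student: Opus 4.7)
The plan is to adapt the upper semicontinuity argument of Theorem \ref{fixed} to the weak* topology on $\hat{\mathcal S}_k$. Let $(\hat v_1^n, \hat v_2^n) \to (\hat v_1, \hat v_2)$ in $\hat{\mathcal S}_1 \times \hat{\mathcal S}_2$ and pick any $(\bar v_1^n, \bar v_2^n) \in \tilde H(\hat v_1^n, \hat v_2^n)$. Compactness of $\hat{\mathcal S}_1 \times \hat{\mathcal S}_2$ lets us extract a subsequence, still indexed by $n$, along which $\bar v_k^n \to \bar v_k$ in $\hat{\mathcal S}_k$ for $k = 1, 2$. By the symmetry between the two components, it suffices to show $\bar v_1 \in \tilde H(\hat v_2)$.

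First I would recast the optimality $\bar v_1^n \in \tilde H(\hat v_2^n)$ in integral form by multiplying the pointwise inequality by a nonnegative test function. For each $i \in S$, every $\tilde v_1 \in \hat{\mathcal S}_1$, and every $\varphi \geq 0$ in $L^1(0,\Theta) \cap L^2(0,\Theta)$,
\begin{equation*}
\int_0^\Theta \tilde F_1^n(i, \bar v_1^n(\theta_1, i), \hat v_2^n(\theta_1, i)) \varphi(\theta_1) d\theta_1 \leq \int_0^\Theta \tilde F_1^n(i, \tilde v_1(\theta_1, i), \hat v_2^n(\theta_1, i)) \varphi(\theta_1) d\theta_1,
\end{equation*}
where $\tilde F_1^n$ denotes $\tilde F_1$ built with $\tilde\psi^{\hat v_2^n}_{\alpha, 1}$ instead of $\tilde\psi^{\hat v_2}_{\alpha, 1}$. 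I would then invoke (A2) to decouple the joint $(v_1, v_2)$-dependence of the integrand: write $\tilde F_1^n(i, v_1, v_2) = B_{1,n}(v_1, \theta_1, i) + B_{2,n}(v_2, \theta_1, i)$, where $B_{1,n}$ and $B_{2,n}$ collect the $v_1$-only and $v_2$-only parts obtained by splitting $\Pi$ and $r_1$ according to the additive structure, each linear in $\tilde\psi^{\hat v_2^n}_{\alpha,1}$.

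Next I would pass to the limit $n \to \infty$. Lemma \ref{continuity1} gives $\tilde\psi^{\hat v_2^n}_{\alpha,1} \to \tilde\psi^{\hat v_2}_{\alpha,1}$ in $C^1([0, \Theta] \times S)$, and combined with the uniform bounds in Lemma \ref{alphabound1} this implies $B_{k,n}(\cdot, \cdot, i) \to B_k(\cdot, \cdot, i)$ uniformly on $(0, \Theta) \times U_k$, where $B_k$ is the analogue built from $\tilde\psi^{\hat v_2}_{\alpha,1}$. After the decomposition each piece of the integrand depends on only one of the strategies $\bar v_1^n$ or $\hat v_2^n$, so the weak* convergence criterion \eqref{convergencecriterion1} applies term by term, with the bounded continuous test functions $B_k(\cdot, \cdot, i)$ on $(0, \Theta) \times U_k$. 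The analogous limit is immediate on the right-hand side with the fixed $\tilde v_1$. Combining, we obtain
\begin{equation*}
\int_0^\Theta \tilde F_1(i, \bar v_1(\theta_1, i), \hat v_2(\theta_1, i)) \varphi(\theta_1) d\theta_1 \leq \int_0^\Theta \tilde F_1(i, \tilde v_1(\theta_1, i), \hat v_2(\theta_1, i)) \varphi(\theta_1) d\theta_1.
\end{equation*}

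Since $\varphi \geq 0$ is arbitrary, the pointwise inequality holds for almost every $\theta_1$. Taking $\tilde v_1(\theta_1, i)$ to be a Beneš-type measurable selector attaining $\inf_{v_1 \in V_1} \tilde F_1(i, v_1, \hat v_2(\theta_1, i))$ forces $\bar v_1(\theta_1, i)$ to attain the same infimum a.e., so $\bar v_1 \in \tilde H(\hat v_2)$; a symmetric argument gives $\bar v_2 \in \tilde H(\hat v_1)$. The hardest step is the limit passage: without (A2) the integrand on the left is jointly nonlinear in $(v_1, v_2)$, and weak* convergence of the marginals $\bar v_1^n$ and $\hat v_2^n$ does not yield weak* convergence of the product measures $\bar v_1^n(\theta_1, i) \otimes \hat v_2^n(\theta_1, i)$. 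Assumption (A2) is precisely what rescues the argument, by splitting the integrand into two pieces each depending on only one marginal.
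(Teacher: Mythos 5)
Your proof is correct and rests on the same pillars as the paper's: compactness of $\hat{\mathcal S}_1\times\hat{\mathcal S}_2$, the additive structure (A2) to decouple the dependence on the two strategy marginals, the bounds of Lemma \ref{alphabound1} and the continuity of Lemma \ref{continuity1} to control the coefficient $\tilde\psi^{\hat v_2^n}_{\alpha,1}$, limit passage in the weak* topology, and finally comparison against an arbitrary $\tilde v_1$ to conclude $\bar v_1\in\tilde H(\hat v_2)$. Where you genuinely differ is in the mechanics of the limit passage: the paper keeps the optimality inequality in pointwise form, asserts that $\tilde F_1(i,\bar v_1^m(\cdot,i),v_2^m(\cdot,i))$ converges weakly in $L^2(0,\Theta)$ to $\tilde F_1(i,\bar v_1(\cdot,i),\hat v_2(\cdot,i))$, and then invokes the Banach--Saks theorem (strong convergence of convex combinations) to extract an a.e. limit statement in $\theta_1$ before passing to the limit in the inequality. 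You instead integrate the inequality against nonnegative test functions $\varphi$, pass to the limit term by term directly via the convergence criterion \eqref{convergencecriterion1} (the $\hat v_2^n$-terms in fact cancel between the two sides under (A2)), and only at the end recover the pointwise a.e. inequality by varying $\varphi$ and a Bene\v{s}-type minimizing selector. Your duality/test-function route avoids the Banach--Saks step, which in the paper is stated somewhat loosely (Banach--Saks gives a.e. convergence of Ces\`aro means along a subsequence, not of the sequence itself), at the cost of being slightly less direct; both arguments gloss over the same minor technicality of interchanging the limit with the series $\sum_j$ in $\Pi$, and your claim of uniform convergence of the coefficients $B_{k,n}$ on $(0,\Theta)\times U_k$ deserves the same dominated-convergence justification the paper leaves implicit. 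You also correctly identify why (A2) is indispensable here (weak* convergence of marginals does not pass to products), which is exactly the role it plays in the paper.
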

\begin{proof}
Let $\{(v_1^m,v_2^m)\}\in \hat{\mathcal{S}}_1\times \hat{\mathcal{S}}_2$ and $(v_1^m,v_2^m) \to (\hat{v_1},\hat{v}_2)$ in $\hat{\mathcal{S}}_1\times \hat{\mathcal{S}}_2$. Let $\bar{v}_1^m \in \tilde{H}(v_2^m)$. Then ${\bar{v}_1^m}\subset \hat{\mathcal{S}}_1$. Since $\hat{\mathcal{S}}_1$ is compact, it has a convergent subsequence, denoted by the same sequence with an abuse of notation, such that $$\bar{v}_1^m \to \bar{v}_1 \; \; \mbox{in} \; \;\hat{\mathcal{S}}_1.$$ Then $(\bar{v}_1^m, v_2^m) \to (\bar{v}_1, \hat{v}_2) \, \, \mbox{in} \, \, \hat{\mathcal{S}}_1\times \hat{\mathcal{S}}_2 $ . Now using (A2), Lemmata \ref{alphabound1} and \ref{continuity1}, and the topology of $\hat{\mathcal{S}}_k,k=1,2$ it follows that for each $i\in S$
$$  \Pi_{\bar{v}_1^m,v_2^m(\theta_1,i)} 
\tilde{\psi}^{v_2^m}_{\alpha,1}(\theta_1,i) +\theta_1 r_1(i,\bar{v}_1^m,v_2^m(\theta_1,i)))\tilde{\psi}^{v_2^m}_{\alpha,1}(\theta_1,i)$$
converges weakly in $L^2(0,\Theta)$ to 
$$  \Pi_{\bar{v}_1,\hat{v}_2(\theta_1,i)} 
\tilde{\psi}^{\hat{v}_2}_{\alpha,1}(\theta_1,i) +\theta_1 r_1(i,\bar{v}_1,\hat{v}_2(\theta_1,i)))\tilde{\psi}^{\hat{v}_2}_{\alpha,1}(\theta_1,i).$$
Thus, by Banach-Saks theorem any sequence of convex combination of the former converges strongly in $L^2(0,\Theta)$ to the latter. Hence along a suitable subsequence 
\interdisplaylinepenalty=0
\begin{eqnarray*}
\lim_{m\to \infty}\tilde{F}_1(i,\bar{v}_1^m(\theta_1,i),v_2^m(\theta_1,i))
=\tilde{F}_1(i,\bar{v}_1(\theta_1,i),\hat{v}_2(\theta_1,i)), \; \mbox{a.e.\ in} \ \theta_1.
\end{eqnarray*}
Now fix $\tilde{v}_1 \in \hat{\mathcal{S}}_1$ and consider the sequence $(\tilde{v}_1,v_2^m)$. Repeat the above argument to conclude that
\begin{eqnarray*}
\tilde{F}_1(i,\tilde{v}_1(\theta_1,i),\hat{v}_2(\theta_1,i))
=\lim_{m\to \infty}\tilde{F}_1(i,\tilde{v}_1(\theta_1,i),v_2^m(\theta_1,i)), \; \mbox{a.e.\ in} \ \theta_1. 
\end{eqnarray*}
Using the fact $\bar{v}_1^m \in H(v_2^m)$, for any $m$ we have 
$$ \tilde{F}_1(i,\tilde{v}_1(\theta_1,i),v_2^m(\theta_1,i)) \geq \tilde{F}_1(i,\bar{v}_1^m(\theta_1,i),v_2^m(\theta_1,i)), \; \mbox{a.e.\ in} \ \theta_1. $$
Thus for any $\tilde{v}_1 \in \hat{\mathcal{S}}_1$ $$\tilde{F}_1(i,\tilde{v}_1(\theta_1,i),\hat{v}_2(\theta_1,i)) \geq \tilde{F}_1(i,\bar{v}_1(\theta_1,i),\hat{v}_2(\theta_1,i)), \; \mbox{a.e.\ in} \ \theta_1. $$
Therefore $\bar{v}_1 \in \tilde{H}(\hat{v}_2)$. Let $\bar{v}_2^m \in \tilde{H}(v_1^m)$ and along a subsequence  $$\bar{v}_2^m \to \bar{v}_2 \; \; \mbox{in} \; \;\hat{\mathcal{S}}_2.$$ Using analogous arguments, we obtain  $\bar{v}_2 \in \tilde{H}(\hat{v}_1)$. This prove that the map $$(\hat{v}_1,\hat{v}_2) \mapsto \tilde{H}(\hat{v}_1,\hat{v}_2) $$ is upper semicontinuous. 
\end{proof}
 
\begin{theorem}\label{Nashequilibriumdiscounted} Assume (A1) and (A2). There exists $\alpha$-discounted
Nash equilibrium in the class $\hat{\mathcal S}_1 \times \hat{\mathcal S}_2$.
\end{theorem}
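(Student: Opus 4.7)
The plan is to apply Fan's fixed point theorem to the set-valued map $(\hat{v}_1,\hat{v}_2) \mapsto \tilde{H}(\hat{v}_1,\hat{v}_2)$ defined just before Lemma \ref{usc1}, and then verify that any fixed point yields a Nash equilibrium in $\hat{\mathcal S}_1 \times \hat{\mathcal S}_2$.

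First I would assemble the hypotheses needed for Fan's theorem. The space $\hat{\mathcal S}_1 \times \hat{\mathcal S}_2$ is a nonempty, convex, compact, metrizable subset of the locally convex space $L^\infty((0,\Theta)\times S, {\mathcal M}_s(U_1)) \times L^\infty((0,\Theta)\times S, {\mathcal M}_s(U_2))$ endowed with the weak* topology, by Warga's construction recalled earlier. From the discussion following the definitions of $\tilde{H}(\hat{v}_2)$ and $\tilde{H}(\hat{v}_1)$, the set $\tilde{H}(\hat{v}_1,\hat{v}_2)$ is nonempty (via a standard measurable selection theorem applied pointwise in $(\theta_k,i)$), convex, and compact. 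Lemma \ref{usc1} supplies upper semicontinuity of the map. Hence by Fan's fixed point theorem there exists $(\hat{v}_1^*,\hat{v}_2^*) \in \hat{\mathcal S}_1 \times \hat{\mathcal S}_2$ with $(\hat{v}_1^*,\hat{v}_2^*) \in \tilde{H}(\hat{v}_1^*,\hat{v}_2^*)$.

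Next I would translate the fixed-point property into a pair of coupled HJB equations. Since $\hat{v}_1^* \in \tilde{H}(\hat{v}_2^*)$, the pointwise infimum in (\ref{discountedhjb11}) with $\hat{v}_2 = \hat{v}_2^*$ is attained at $v_1 = \hat{v}_1^*(\theta_1,i)$, so $\tilde{\psi}^{\hat{v}_2^*}_{\alpha,1}$ satisfies
\[
\alpha\theta_1 \frac{d\tilde{\psi}^{\hat{v}_2^*}_{\alpha,1}}{d\theta_1}(\theta_1,i)
= \Pi_{\hat{v}_1^*(\theta_1,i),\hat{v}_2^*(\theta_1,i)}\tilde{\psi}^{\hat{v}_2^*}_{\alpha,1}(\theta_1,i)
+ \theta_1 r_1(i,\hat{v}_1^*(\theta_1,i),\hat{v}_2^*(\theta_1,i))\tilde{\psi}^{\hat{v}_2^*}_{\alpha,1}(\theta_1,i),
\]
with $\tilde{\psi}^{\hat{v}_2^*}_{\alpha,1}(0,i)=1$, and analogously for $\tilde{\psi}^{\hat{v}_1^*}_{\alpha,2}$.

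Finally I would verify the Nash inequalities. Fix player 2's eventually stationary strategy $\hat{v}_2^*$, so that player 1 faces a CTMDP. By (\ref{valuefunctions}),
\[
\tilde{\psi}^{\hat{v}_2^*}_{\alpha,1}(\theta_1,i) = \inf_{\tilde{v}_1\in{\mathcal M}_1} J^{\tilde{v}_1,\hat{v}_2^*}_{\alpha,1}(\theta_1,i).
\]
On the other hand, applying It\^o's formula to $t\mapsto \tilde{\psi}^{\hat{v}_2^*}_{\alpha,1}(\theta_1 e^{-\alpha t}, Y(t))\exp\!\left(\theta_1\int_0^t e^{-\alpha s} r_1(Y(s-),\hat{v}_1^*(\theta_1 e^{-\alpha s},Y(s-)),\hat{v}_2^*(\theta_1 e^{-\alpha s},Y(s-)))ds\right)$ along the solution $Y(t)$ of (\ref{cmc21}) with $\hat{v}_k = \hat{v}_k^*$, and using the HJB equation above, yields a local martingale whose bracket is controlled by Lemma \ref{alphabound1}; taking expectations and letting $t\to\infty$ (using the uniform bounds of Lemma \ref{alphabound1} for dominated convergence) gives
\[
\tilde{\psi}^{\hat{v}_2^*}_{\alpha,1}(\theta_1,i) = J^{\hat{v}_1^*,\hat{v}_2^*}_{\alpha,1}(\theta_1,i).
\]
Combining the two displays, $J^{\hat{v}_1^*,\hat{v}_2^*}_{\alpha,1}(\theta_1,i) \le J^{\tilde{v}_1,\hat{v}_2^*}_{\alpha,1}(\theta_1,i)$ for every $\tilde{v}_1\in{\mathcal M}_1$. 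The symmetric argument for player 2 yields the other Nash inequality, and by monotonicity of the logarithm the same inequalities hold for ${\mathcal J}^{\cdot}_{\alpha,k}$. The main obstacle is the It\^o-formula/verification step: one must justify that the eventually stationary strategy $\hat{v}_1^*(\theta_1 e^{-\alpha t},Y(t-))$ is an admissible feedback in $\mathcal M_1$, that the stochastic integral is a true martingale (handled via the uniform bound $M$ on the jump intensities from (A1) together with Lemma \ref{alphabound1}), and that the boundary value $\tilde{\psi}^{\hat{v}_2^*}_{\alpha,1}(0,Y(\infty))=1$ enters correctly in the limit $t\to\infty$.
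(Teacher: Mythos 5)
Your proposal is correct and follows essentially the same route as the paper: Fan's fixed point theorem applied to the map $(\hat{v}_1,\hat{v}_2)\mapsto\tilde{H}(\hat{v}_1,\hat{v}_2)$ (using Lemma \ref{usc1}), the fixed point yielding the coupled HJB equations, and then the value-function identities (\ref{valuefunctions}) giving the Nash inequalities. The only difference is that you spell out, via the It\^o-formula verification argument, the equality $\tilde{\psi}^{\hat{v}_2^*}_{\alpha,1}=J^{\hat{v}_1^*,\hat{v}_2^*}_{\alpha,1}$, which the paper asserts directly from the fixed-point/HJB characterization.
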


\begin{proof} 
From Lemma \ref{usc1} and Fan's fixed point theorem \cite{Fan}, there exists a fixed point 
$(\hat{v}^*_1, \hat{v}^*_2) \in \hat{\mathcal S}_1 \times \hat{\mathcal S}_2$,
for the map $(\hat{v}_1,\hat{v}_2) \mapsto \tilde{H}(\hat{v}_1,\hat{v}_2) $ from $\hat{\mathcal S}_1 \times \hat{\mathcal S}_2 \to 
2^{\hat{\mathcal S}_1} \times 2^{\hat{\mathcal S}_2}$, i.e., 
\[
(\hat{v}^*_1, \hat{v}^*_2) \in \tilde{H} (\hat{v}^*_1, \hat{v}^*_2) .
\]
This implies that $(\tilde{\psi}^{\hat{v}^*_2}_{\alpha, 1}, \tilde{\psi}^{\hat{v}^*_1}_{\alpha, 2})$ satisfies the following coupled HJB equations
\interdisplaylinepenalty=0
\begin{equation}\label{coupled-hjb1}
 \left\{\begin{aligned}
        \alpha \theta_1  \dfrac{d\tilde{\psi}^{\hat{v}^*_2}_{\alpha, 1}}{d\theta_1}(\theta_1,i)&= \inf_{v_1\in V_1}\Big [ \Pi_{v_1,\hat{v}_2^*(\theta_1,i)} 
\tilde{\psi}^{\hat{v}^*_2}_{\alpha, 1}(\theta_1,i) + \theta_1 r_1(i,v_1,\hat{v}_2^*(\theta_1,i))\tilde{\psi}^{\hat{v}^*_2}_{\alpha, 1}(\theta_1,i) \Big ] \, \,   \\
&=  \Pi_{\hat{v}_1^*(\theta_1,i),\hat{v}_2^*(\theta_1,i)} 
\tilde{\psi}^{\hat{v}^*_2}_{\alpha, 1}(\theta_1,i) + \theta_1 r_1(i,\hat{v}_1^*(\theta_1,i),\hat{v}_2^*(\theta_1,i))\tilde{\psi}^{\hat{v}^*_2}_{\alpha, 1}(\theta_1,i)   \\
\displaystyle{ \tilde{\psi}^{\hat{v}^*_2}_{\alpha, 1}(0,i) } &= 1 ,  \\
\alpha \theta_2  \dfrac{d\tilde{\psi}^{\hat{v}^*_1}_{\alpha, 2}}{d\theta_2}(\theta_2,i)&= \inf_{v_2\in V_2}\Big [ \Pi_{\hat{v}_1^*(\theta_2,i),v_2} 
\tilde{\psi}^{\hat{v}^*_1}_{\alpha, 2}(\theta_2,i) + \theta_2 r_2(i,\hat{v}_1^*(\theta_2,i),v_2)\tilde{\psi}^{\hat{v}^*_1}_{\alpha, 2}(\theta_2,i) \Big ] \, \,   \\
&=  \Pi_{\hat{v}_1^*(\theta_2,i),\hat{v}_2^*(\theta_2,i)} 
\tilde{\psi}^{\hat{v}^*_1}_{\alpha, 2}(\theta_2,i) + \theta_2 r_2(i,\hat{v}_1^*(\theta_2,i),\hat{v}_2^*(\theta_2,i))\tilde{\psi}^{\hat{v}^*_1}_{\alpha, 2}(\theta_2,i)   \\
\displaystyle{ \tilde{\psi}^{\hat{v}^*_1}_{\alpha, 2}(0,i) } &= 1 .
       \end{aligned}
 \right.
\end{equation}
Now from (\ref{valuefunctions}), we have
\begin{eqnarray*}
\tilde{\psi}^{\hat{v}^*_2}_{\alpha, 1} (\theta_1, i) & = & \inf_{v_1 \in {\mathcal M}_1} J^{v_1, \hat{v}^*_2}_{\alpha, 1}(\theta_1, i)\\
& = & J^{\hat{v}^*_1, \hat{v}^*_2}_{\alpha, 1}(\theta_1, i), \\
\tilde{\psi}^{\hat{v}^*_1}_{\alpha, 2} (\theta_2, i) & = & \inf_{v_2 \in {\mathcal M}_2} J^{\hat{v}^*_1, v_2}_{\alpha, 2}(\theta_2, i)\\
& = & J^{\hat{v}^*_1, \hat{v}^*_2}_{\alpha, 2}(\theta_2, i).
\end{eqnarray*}
Therefore we obtain
\begin{eqnarray*}
 J^{v_1, \hat{v}^*_2}_{\alpha, 1}(\theta_1, i) & \geq  & J^{\hat{v}^*_1, \hat{v}^*_2}_{\alpha, 1}(\theta_1, i), \ \forall \ 
v_1 \in {\mathcal M}_1 , \\
 J^{\hat{v}^*_1, v_2}_{\alpha, 2}(\theta_2, i) & \geq  & J^{\hat{v}^*_1, \hat{v}^*_2}_{\alpha, 2}(\theta_2, i), \ \forall \ 
v_2 \in {\mathcal M}_2.
\end{eqnarray*}
This proves the existence of $\alpha$-discounted Nash equilibrium which is eventually stationary Markov strategies. 
\end{proof}
\section{Vanishing Discount Asymptotics}\label{LF}
In this section  we prove the existence of Nash equilibrium strategies for the ergodic cost criterion in the class of  stationary Markov strategies under the following assumption:\\
  \noindent {\bf (A3)}(Lyapunov condition) There exist constants $b > 0, \ \delta >0$, a finite set $C$ and a map 
$W : S \to [1, \infty)$ with $W(i)\to \infty$ as $i \to \infty$, such that
\begin{eqnarray*}
\Pi_v W(i) &\leq&  -2 \delta W(i) + b I_C(i), \ i \in S, \ v \in V \, . 
 \end{eqnarray*} 
We refer to Guo and Hern$\acute{\rm a}$ndez-Lerma \cite{GuoLermabook} for examples of controlled continuous time Markov chains satisfying the above condition.

 Throughout this section, we assume that 
for every pair of stationary  Markov strategies  $(v_1,v_2)$ the corresponding Markov chain is irreducible. \\
First we truncate our cost functions: This process plays a crucial role  in finding a Nash equilibrium of the game. For $k=1,2$, let $r_k^n : S \times  V \to [0, \ \infty)$ be given by
\begin{equation}
r_{k}^n:= \left\{
\begin{array}{ll}
r_k & {\rm if} \;  i\in \{1,2,\cdots,n\}\\
0 & \rm{otherwise}.   \\
\end{array} \right. \label{(R)}
\end{equation}
Then as in the previous section we can show the following result.
\begin{theorem}
Assume (A1). Then there exist a pair of Markov stationary strategy $(v_{1,n}^*,v_{2,n}^*)$ and a pair of  bounded, continuously differentiable functions $(\psi^{v_{2,n}^*}_{\alpha,1n},\psi^{v_{1,n}^*}_{\alpha,2n})$ such that 
\interdisplaylinepenalty=0
 \begin{equation}\label{coupled-hjb1}
 \left\{\begin{aligned}
\alpha \theta_1  \dfrac{d\psi^{v_{2,n}^*}_{\alpha,1n}}{d\theta_1}(\theta_1,i)&= \inf_{v_1\in V_1}\Big [ \Pi_{v_1,v_{2,n}^*(i)} 
\psi^{v_{2,n}^*}_{\alpha,1n}(\theta_1,i) + \theta_1 r_1^n(i,v_1,v_{2,n}^*(i))\psi^{v_{2,n}^*}_{\alpha,1n}(\theta_1,i) \Big ] \, \,  \\
&=  \Pi_{v_{1,n}^*,v_{2,n}^*(i)} 
\psi^{v_{2,n}^*}_{\alpha,1n}(\theta_1,i) + \theta_1 r_1^n(i,v_{1,n}^*,v_{2,n}^*(i))\psi^{v_{2,n}^*}_{\alpha,1n}(\theta_1,i),   \\
\displaystyle{ \psi^{v_{2,n}^*}_{\alpha,1 }(0,i) } &= 1 ,  \\
\alpha \theta_2  \dfrac{d\psi^{v_{1,n}^*}_{\alpha,2n}}{d\theta_2}(\theta_2,i)&= \inf_{v_2\in V_2}\Big [ \Pi_{v_{1,n}^*(i),v_2} 
\psi^{v_{1,n}^*}_{\alpha,2n}(\theta_2,i) + \theta_2 r_2^n(i,v_{1,n}^*(i),v_2)\psi^{v_{1,n}^*}_{\alpha,2n}(\theta_2,i) \Big ] \, \,   \\
&=  \Pi_{v_{1,n}^*(i),v_{2,n}^*} 
\psi^{v_{1,n}^*}_{\alpha,2n}(\theta_2,i) + \theta_2 r_2^n(i,v_{1,n}^*(i),v_{1,n}^*)\psi^{v_{1,n}^*}_{\alpha,2n}(\theta_2,i),   \\
\displaystyle{ \psi^{v_{1,n}^*}_{\alpha,2}(0,i) } &= 1 .
 \end{aligned}
 \right.
\end{equation}
\end{theorem}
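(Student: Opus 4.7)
The plan is to mirror the proof of Theorem \ref{fixed} essentially verbatim, with the running cost $r_k$ replaced throughout by the truncated cost $r_k^n$. The key observation is that for each fixed $n$, the function $r_k^n : S \times V_1 \times V_2 \to [0, \infty)$ inherits every property of $r_k$ that was actually used in Section 3: it satisfies $\|r_k^n\|_{\infty} \le \|r_k\|_\infty$, and since it is obtained from $\bar r_k I_{\{1,\dots,n\}}$ by integrating against the product measure $v_1(du_1)v_2(du_2)$, it is continuous and affine in each of $v_1$ and $v_2$ separately. Hence Lemma \ref{alphabound} and Lemma \ref{continuity} go through with $r_k$ replaced by $r_k^n$, giving the uniform bounds
\[
1 \le \psi^{v_2}_{\alpha,1n}(\theta_1,i) \le e^{\theta_1 \|r_1\|_\infty/\alpha}, \qquad \Big\|\frac{d\psi^{v_2}_{\alpha,1n}}{d\theta_1}\Big\|_\infty \le \frac{\|r_1\|_\infty}{\alpha}e^{\Theta\|r_1\|_\infty/\alpha},
\]
and analogous bounds for $\psi^{v_1}_{\alpha,2n}$, together with continuity of the maps $v_2 \mapsto \psi^{v_2}_{\alpha,1n}$ and $v_1 \mapsto \psi^{v_1}_{\alpha,2n}$ on $\mathcal{S}_2$ and $\mathcal{S}_1$ respectively.

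Next, following Theorem \ref{fixed}, for each $v_2 \in \mathcal{S}_2$ I would introduce
\[
F_{1,n}(i,v_1,v_2(i)) = \Pi_{v_1,v_2(i)}\psi^{v_2}_{\alpha,1n}(\theta_1,i) + \theta_1 r_1^n(i,v_1,v_2(i))\psi^{v_2}_{\alpha,1n}(\theta_1,i),
\]
where $\psi^{v_2}_{\alpha,1n}$ is the unique bounded $C^1$ solution to the analogue of (\ref{HJB1}) with $r_1$ replaced by $r_1^n$ (existence and uniqueness by \cite{GhoshSaha}, \cite{SP}). Let $H_n(v_2) \subset \mathcal{S}_1$ be the set of stationary strategies $v_1^*$ that minimize $v_1 \mapsto F_{1,n}(i,v_1,v_2(i))$ over $V_1$ for every $i \in S$. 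Non-emptiness follows from the measurable selection theorem of Bene\v{s} \cite{Benes}; convexity follows from the affine dependence of $\bar\pi_{ij}$ and $\bar r_1^n$ on $v_1$ (after integration); closedness, and hence compactness, follows from the continuity of $F_{1,n}$ in $v_1$. Defining $H_n(v_1) \subset \mathcal{S}_2$ symmetrically and setting $H_n(v_1,v_2) = H_n(v_2)\times H_n(v_1)$ produces a nonempty, convex, compact-valued multifunction on $\mathcal{S}_1 \times \mathcal{S}_2$.

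The final and only non-trivial step is upper semicontinuity of $(v_1,v_2) \mapsto H_n(v_1,v_2)$. If $(v_1^m,v_2^m) \to (\hat v_1, \hat v_2)$ in $\mathcal{S}_1 \times \mathcal{S}_2$ and $\bar v_1^m \in H_n(v_2^m)$ with $\bar v_1^m \to \bar v_1$ along a subsequence (using compactness of $\mathcal{S}_1$), then by Lemma \ref{continuity} (applied to the truncated system) $\psi^{v_2^m}_{\alpha,1n} \to \psi^{\hat v_2}_{\alpha,1n}$, and one passes to the limit in $F_{1,n}(i,\bar v_1^m(i),v_2^m(i))$ exactly as in (3.6)--(3.7) of the proof of Theorem \ref{fixed}. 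The only subtlety, and what I expect to be the main technical obstacle, is the interchange of limit and the infinite sum $\sum_{j\in S}\pi_{ij}(\cdot)\psi^{v_2^m}_{\alpha,1n}(\theta_1,j)$; this is justified because $\psi^{v_2^m}_{\alpha,1n}$ is uniformly bounded by the estimate above, while (A1) yields $\sum_{j\neq i}\bar\pi_{ij}(u) = -\bar\pi_{ii}(u) \le M < \infty$ uniformly in $u$, so dominated convergence applies. Combining the two one-sided inequalities $F_{1,n}(i,\tilde v_1(i),v_2^m(i)) \ge F_{1,n}(i,\bar v_1^m(i),v_2^m(i))$ with the limits yields $\bar v_1 \in H_n(\hat v_2)$; the symmetric argument handles $H_n(\hat v_1)$.

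Fan's fixed point theorem \cite{Fan} then furnishes $(v_{1,n}^*, v_{2,n}^*) \in H_n(v_{1,n}^*, v_{2,n}^*)$, which together with the associated value functions $(\psi^{v_{2,n}^*}_{\alpha,1n}, \psi^{v_{1,n}^*}_{\alpha,2n})$ satisfies the coupled system (\ref{coupled-hjb1}) by construction, and the required boundedness and continuous differentiability in $\theta_k$ are exactly the content of the truncated Lemma \ref{alphabound}. This completes the proof sketch.
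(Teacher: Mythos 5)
Your proposal is correct and coincides with the paper's own treatment: the paper offers no separate argument for this theorem, stating only that it follows ``as in the previous section,'' i.e.\ by repeating the proof of Theorem \ref{fixed} with $r_k$ replaced by the truncated cost $r_k^n$, which is exactly what you do. Your added remarks (that $r_k^n$ keeps the boundedness and continuity actually used, and that the interchange of limit and the series $\sum_{j}\pi_{ij}\psi^{v_2^m}_{\alpha,1n}(\theta_1,j)$ is justified by the uniform bound on $\psi$ together with $\sup_{i,u}[-\bar\pi_{ii}(u)]=M<\infty$) simply make explicit the checks the paper leaves implicit.
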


Let 
\begin{equation}\label{phidefinition}
 \phi^{v_{2,n}^*}_{\alpha,1n}(\theta_1,i):=\frac{1}{\theta_1} \ln \psi^{v_{2,n}^*}_{\alpha,1n}(\theta_1,i) .
\end{equation}
Then 
$$
\alpha\theta_1\frac{d \psi^{v_{2,n}^*}_{\alpha,1n}}{d \theta_1} =\theta_1 \left( \alpha \phi^{v_{2,n}^*}_{\alpha,1n}
+\theta_1 \alpha\dfrac{d \phi^{v_{2,n}^*}_{\alpha,1n}}{d \theta_1}\right) \psi^{v_{2,n}^*}_{\alpha,1n} .
$$
Let $\theta_1 \left( \alpha \phi^{v_{2,n}^*}_{\alpha,1n}
+\theta_1 \alpha\dfrac{d \phi^{v_{2,n}^*}_{\alpha,1n}}{d \theta_1}\right)=g^{v_{2,n}^*}_{\alpha,1n}(\theta_1,i)$. Then 
 $\psi^{v_{2,n}^*}_{\alpha,1n}$ is a solution to the ODE
 \interdisplaylinepenalty=0
 \begin{equation*}
 \left\{\begin{aligned}
0&= \displaystyle{\inf_{v_1\in V_1}  \Big[ \Pi_{v_1,v_{2,n}^*(i)} \psi^{v_{2,n}^*}_{\alpha,1n} +(\theta_1 r_1^n(i,v_1,v_{2,n}^*(i))-g^{v_{2,n}^*}_{\alpha,1n}(\theta_1,i))
\psi^{v_{2,n}^*}_{\alpha,1n}\Big ] }\\
\displaystyle{\psi^{v_{2,n}^*}_{\alpha,1n}(0,i) } &= 1 .
 \end{aligned}
 \right.
\end{equation*}
In what follows we fix a reference state $i_0 \in S$ satisfying
$$W(i_0)\geq 1+\frac{b}{\delta}.$$
Set $$\bar{\psi}^{v_{2,n}^*}_{\alpha,1n}(\theta_1,i)=\dfrac{\psi^{v_{2,n}^*}_{\alpha ,1n}(\theta_1,i)}{\psi^{v_{2,n}^*}_{\alpha ,1n}(\theta_1,i_0)}.$$
Then a straightforward calculation shows that $\bar{\psi}^{v_{2,n}^*}_{\alpha,1n}$
is a solution to the ODE
\interdisplaylinepenalty=0
 \begin{equation*}
 \left\{\begin{aligned}
0&= \displaystyle{\inf_{v_1\in V_1}  \Big[ \Pi_{v_1,v_{2,n}^*(i)} \bar{\psi}^{v_{2,n}^*}_{\alpha,1n} +(\theta_1 r_1^n(i,v_1,v_{2,n}^*(i))-g^{v_{2,n}^*}_{\alpha,1n}(\theta_1,i))
\bar{\psi}^{v_{2,n}^*}_{\alpha,1n}\Big ] }\\
\displaystyle{ \bar{\psi}^{v_{2,n}^*}_{\alpha,1n}(\theta_1,i_0) } &= 1 .
 \end{aligned}
 \right.
\end{equation*} 
Using analogous arguments we can show that 
$\bar{\psi}^{v_{1,n}^*}_{\alpha,2n}$
is a solution to the ODE
\interdisplaylinepenalty=0
\begin{equation*}
 \left\{\begin{aligned}
0&= \displaystyle{\inf_{v_2\in V_2}  \Big[ \Pi_{v_{1,n}^*(i),v_2} \bar{\psi}^{v_{1,n}^*}_{\alpha,2n} +(\theta_2 r_2^n(i,v_{1,n}^*(i),v_2)-g^{v_{1,n}^*}_{\alpha,2n}(\theta_2,i))
\bar{\psi}^{v_{1,n}^*}_{\alpha,2n}\Big ] }\\
\displaystyle{ \bar{\psi}^{v_{1,n}^*}_{\alpha,2n}(\theta_2,i_0) } &= 1 ,
 \end{aligned}
 \right.
\end{equation*} 
where  $\theta_2 \left( \alpha \phi^{v_{1,n}^*}_{\alpha,2n}
+\theta_2 \alpha\dfrac{d \phi^{v_{1,n}^*}_{\alpha,2n}}{d \theta_2}\right)=g^{v_{1,n}^*}_{\alpha,2n}(\theta_2,i)$.

This immediately yields the following result:
\begin{theorem}\label{thm17}
Assume (A1). Then there exist a pair of Markov stationary strategy $(v_{1,n}^*,v_{2,n}^*)$ and a pair of bounded, continuously differentiable functions $( \bar{\psi}^{v_{2,n}^*}_{\alpha,1n}, \bar{\psi}^{v_{1,n}^*}_{\alpha,2n})$ such that 
\interdisplaylinepenalty=0
\begin{equation}\label{coupled-hjb2}
 \left\{\begin{aligned}
0&= \displaystyle{\inf_{v_1\in V_1}  \Big[ \Pi_{v_1,v_{2,n}^*(i)} \bar{\psi}^{v_{2,n}^*}_{\alpha,1n} +(\theta_1 r_1^n(i,v_1,v_{2,n}^*(i))-g^{v_{2,n}^*}_{\alpha,1n}(\theta_1,i))
\bar{\psi}^{v_{2,n}^*}_{\alpha,1n}\Big ] }\, \,   \\
\displaystyle{ \bar{\psi}^{v_{2,n}^*}_{\alpha,1n}(\theta_1,i_0) } &= 1, \\
0&= \displaystyle{\inf_{v_2\in V_2}  \Big[ \Pi_{v_{1,n}^*(i),v_2} \bar{\psi}^{v_{1,n}^*}_{\alpha,2n} +(\theta_2 r_2^n(i,v_{1,n}^*(i),v_2)-g^{v_{1,n}^*}_{\alpha,2n}(\theta_2,i))
\bar{\psi}^{v_{1,n}^*}_{\alpha,2n}\Big ] }\, \,  \\
\displaystyle{ \bar{\psi}^{v_{1,n}^*}_{\alpha,2n}(\theta_2,i_0) } &= 1 .
 \end{aligned}
 \right.
\end{equation}
\end{theorem}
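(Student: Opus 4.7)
My plan is to derive the theorem as a direct consequence of the truncated version of Theorem \ref{fixed}, together with the logarithmic transformation and normalization that have already been introduced in the paragraphs preceding the statement. The goal is to rewrite the multiplicative HJB equation (\ref{coupled-hjb1}) as an equivalent ``additive eigenvalue-type'' equation by peeling off the $\theta$-derivative into a state-dependent scalar $g$, and then to make the solution unique (and hence useful for the vanishing discount analysis in the next section) by normalizing its value at the reference state $i_0$.

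First, I would invoke the truncated analogue of Theorem \ref{fixed}: applying that theorem with the running costs $r_k$ replaced by the truncated costs $r_k^n$ produces a pair $(v_{1,n}^*, v_{2,n}^*) \in \mathcal{S}_1 \times \mathcal{S}_2$ and bounded, $C^1$ functions $(\psi^{v_{2,n}^*}_{\alpha,1n}, \psi^{v_{1,n}^*}_{\alpha,2n})$ solving the coupled HJB system (\ref{coupled-hjb1}). The truncated analogue of Lemma \ref{alphabound} then gives $1 \le \psi^{v_{2,n}^*}_{\alpha,1n} \le e^{\theta_1\|r_1^n\|_\infty/\alpha}$ and similarly for the other component, so the logarithms are well-defined and we may set $\phi^{v_{2,n}^*}_{\alpha,1n}(\theta_1,i) := \frac{1}{\theta_1}\ln \psi^{v_{2,n}^*}_{\alpha,1n}(\theta_1,i)$ as in (\ref{phidefinition}).

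Next I would carry out the differential calculation already indicated before the theorem: differentiating the identity $\psi = e^{\theta_1 \phi}$ in $\theta_1$ yields $\alpha\theta_1 \frac{d\psi^{v_{2,n}^*}_{\alpha,1n}}{d\theta_1} = g^{v_{2,n}^*}_{\alpha,1n}(\theta_1,i)\,\psi^{v_{2,n}^*}_{\alpha,1n}$ with $g^{v_{2,n}^*}_{\alpha,1n}(\theta_1,i) = \theta_1\bigl(\alpha\phi^{v_{2,n}^*}_{\alpha,1n} + \alpha\theta_1 \tfrac{d\phi^{v_{2,n}^*}_{\alpha,1n}}{d\theta_1}\bigr)$. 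Plugging this into the left-hand side of the first equation of (\ref{coupled-hjb1}) and transposing gives the eigenvalue-style identity
\[
0 \;=\; \inf_{v_1 \in V_1}\Bigl[\Pi_{v_1,v_{2,n}^*(i)}\psi^{v_{2,n}^*}_{\alpha,1n} + \bigl(\theta_1 r_1^n(i,v_1,v_{2,n}^*(i)) - g^{v_{2,n}^*}_{\alpha,1n}(\theta_1,i)\bigr)\psi^{v_{2,n}^*}_{\alpha,1n}\Bigr],
\]
attained at $v_1 = v_{1,n}^*$. I would then divide through by the strictly positive scalar $\psi^{v_{2,n}^*}_{\alpha,1n}(\theta_1,i_0)$: because $\Pi_{v_1,v_{2,n}^*(i)}$ acts linearly on the state argument and the divisor is independent of $i$, the operator commutes with this division, so $\bar{\psi}^{v_{2,n}^*}_{\alpha,1n} := \psi^{v_{2,n}^*}_{\alpha,1n}/\psi^{v_{2,n}^*}_{\alpha,1n}(\theta_1,i_0)$ satisfies exactly the first equation of (\ref{coupled-hjb2}), together with the normalization $\bar{\psi}^{v_{2,n}^*}_{\alpha,1n}(\theta_1,i_0)=1$. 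The symmetric argument for the second player produces the second equation of (\ref{coupled-hjb2}).

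Since everything in this argument is either a transcription of the truncated Theorem \ref{fixed} or an explicit algebraic manipulation, there is no real analytic obstacle. The only point that requires a small bit of care is verifying that the logarithmic transformation is legitimate, which relies on the strict positivity $\psi^{v_{2,n}^*}_{\alpha,1n} \ge 1$ from the truncated version of Lemma \ref{alphabound}, and that the infimum is preserved under division by a positive state-independent scalar, which is immediate from the linearity of $\Pi$ in the state variable. Hence the theorem really does follow ``immediately'' from the preceding calculations, as the text asserts.
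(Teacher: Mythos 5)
Your proposal is correct and follows essentially the same route as the paper: invoke the truncated version of Theorem \ref{fixed} to get $(v_{1,n}^*,v_{2,n}^*)$ and solutions of (\ref{coupled-hjb1}), apply the logarithmic transformation (\ref{phidefinition}) to rewrite $\alpha\theta_1\,d\psi/d\theta_1$ as $g^{v_{2,n}^*}_{\alpha,1n}\psi$, and then normalize by the positive, $i$-independent scalar $\psi^{v_{2,n}^*}_{\alpha,1n}(\theta_1,i_0)$, using linearity of $\Pi$ and positivity $\psi\geq 1$ to preserve the equation and the infimum. This matches the paper's own derivation, so nothing further is needed.
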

Next we want to take limit $\alpha \to 0$. To this end we show that $\bar{\psi}^{v_{2,n}^*}_{\alpha,1n}(\theta_1,i),\;\bar{\psi}^{v_{1,n}^*}_{\alpha,2n}(\theta_2,i)  $ and $g^{v_{2,n}^*}_{\alpha,1n}(\theta_1,i),\; g^{v_{1,n}^*}_{\alpha,2n}(\theta_2,i)  $ are uniformly bounded in $\alpha$ for each $i \in S $. 
\begin{lemma}\label{alpha1}
Assume (A1). Let $\phi^{v_{2,n}^*}_{\alpha,1n}, \; \phi^{v_{1,n}^*}_{\alpha,2n}$ be given by (\ref{phidefinition}), then the following inequalities hold:
 $$
\| \alpha \phi^{v_{2,n}^*}_{\alpha,1n}\|_{\infty} + \Big \|\alpha\theta_1\frac{d \phi^{v_{2,n}^*}_{\alpha,1n}}{d \theta_1} \Big \|_{\infty}
\leq 3 \|r_1 \|_{\infty},  \ \forall\; 0<\alpha<1,\ 0<\theta < \Theta.
$$
and 
 $$
\| \alpha \phi^{v_{1,n}^*}_{\alpha,2n}\|_{\infty} + \Big \|\alpha\theta_2\frac{d \phi^{v_{1,n}^*}_{\alpha,2n}}{d \theta_2} \Big \|_{\infty}
\leq 3 \|r_2 \|_{\infty},  \ \forall\; 0<\alpha<1,\ 0<\theta < \Theta.
$$
\end{lemma}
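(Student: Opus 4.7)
\noindent\emph{Proof strategy.} The plan is to exploit the stochastic representation
\[
\psi^{v_{2,n}^*}_{\alpha,1n}(\theta_1,i) \ = \ \inf_{v_1 \in \mathcal M_1} E^{v_1,v_{2,n}^*}_i\bigl[e^{\theta_1 F_\alpha^1}\bigr],
\]
where $F_\alpha^1 := \int_0^\infty e^{-\alpha t}\, r_1^n(Y(t-),v_1(t,Y(t-)),v_{2,n}^*(Y(t-)))\,dt$ satisfies the pointwise bound $0 \le F_\alpha^1 \le \|r_1\|_\infty/\alpha$ (recalling that $0 \le r_1^n \le r_1$ by the truncation \eqref{(R)}). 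The first piece, $\|\alpha\phi^{v_{2,n}^*}_{\alpha,1n}\|_\infty \le \|r_1\|_\infty$, then follows at once from the sandwich $1 \le \psi^{v_{2,n}^*}_{\alpha,1n}(\theta_1,i) \le e^{\theta_1\|r_1\|_\infty/\alpha}$ (Lemma~\ref{alphabound} applied with the truncated cost) by taking $\ln$ and dividing by $\theta_1$.

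For the derivative piece, I would first record the algebraic identity obtained by differentiating $\theta_1\phi^{v_{2,n}^*}_{\alpha,1n} = \ln\psi^{v_{2,n}^*}_{\alpha,1n}$, namely
\[
\alpha\theta_1\, \frac{d\phi^{v_{2,n}^*}_{\alpha,1n}}{d\theta_1} \ = \ \frac{\alpha}{\psi^{v_{2,n}^*}_{\alpha,1n}}\,\frac{d\psi^{v_{2,n}^*}_{\alpha,1n}}{d\theta_1} \ - \ \alpha\phi^{v_{2,n}^*}_{\alpha,1n}.
\]
Since the last term is already controlled, the task reduces to bounding the logarithmic derivative $\alpha\psi'/\psi$ \emph{uniformly} in $\alpha$; the crude estimate in Lemma~\ref{alphabound}, which scales like $\alpha^{-1}e^{\Theta\|r_1\|_\infty/\alpha}$, is useless for this. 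Instead, for $h>0$ I would derive the two-sided comparison
\[
\psi^{v_{2,n}^*}_{\alpha,1n}(\theta_1,i) \ \le \ \psi^{v_{2,n}^*}_{\alpha,1n}(\theta_1+h,i) \ \le \ e^{h\|r_1\|_\infty/\alpha}\,\psi^{v_{2,n}^*}_{\alpha,1n}(\theta_1,i),
\]
the left inequality from monotonicity of $\theta \mapsto e^{\theta F_\alpha^1}$ (since $F_\alpha^1 \ge 0$), the right inequality from the pointwise bound $e^{(\theta_1+h)F_\alpha^1} \le e^{\theta_1 F_\alpha^1}\cdot e^{h\|r_1\|_\infty/\alpha}$ followed by taking infima over $v_1$ (the positive scalar $e^{h\|r_1\|_\infty/\alpha}$ factors out of the infimum). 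Passing to $\ln$, dividing by $h$, and letting $h\downarrow 0$ then yields $0 \le \alpha\psi'/\psi \le \|r_1\|_\infty$, so $\|\alpha\theta_1\,d\phi^{v_{2,n}^*}_{\alpha,1n}/d\theta_1\|_\infty \le 2\|r_1\|_\infty$, and summing the two pieces produces the stated bound of $3\|r_1\|_\infty$.

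The companion inequality for $\phi^{v_{1,n}^*}_{\alpha,2n}$ would follow by the identical argument with player~$2$'s representation formula and $\|r_2\|_\infty$ replacing $\|r_1\|_\infty$. There is no serious obstacle in the lemma itself; the only subtlety is to resist substituting the $\alpha$-dependent derivative estimate of Lemma~\ref{alphabound} and instead squeeze $\psi'/\psi$ via the comparison above, which is precisely what turns a naive $O(1/\alpha)$ bound into the genuinely $O(1)$ bound that is indispensable for the forthcoming vanishing-discount asymptotics.
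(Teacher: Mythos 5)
Your proof is correct and takes essentially the same route as the paper: both control $\|\alpha\phi^{v_{2,n}^*}_{\alpha,1n}\|_\infty\le\|r_1\|_\infty$ from the sandwich $1\le\psi\le e^{\theta_1\|r_1\|_\infty/\alpha}$, and then bound $\alpha\theta_1\,d\phi/d\theta_1$ via the decomposition $\alpha\theta_1\frac{d\phi}{d\theta_1}=\alpha\frac{d}{d\theta_1}\ln\psi-\alpha\phi$ together with the key estimate $0\le\frac{d}{d\theta_1}\ln\psi\le\|r_1\|_\infty/\alpha$, giving the same constant $3\|r_k\|_\infty$. The only cosmetic difference is how that Lipschitz estimate on $\ln\psi$ is obtained: you use a direct two-sided comparison $\psi(\theta_1,i)\le\psi(\theta_1+h,i)\le e^{h\|r_1\|_\infty/\alpha}\psi(\theta_1,i)$ before passing to the infimum, while the paper differentiates the per-strategy quantity $H^{1n}_\alpha=\log E_i[e^{\theta_1 F^{1n}_\alpha}]$, applies the mean value theorem, and takes a supremum over strategies.
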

\begin{proof} 
From Lemma \ref{alphabound}, we have 
$$
1 \leq \psi^{v_{2,n}^*}_{\alpha,1n}(\theta_1,i) \leq e^{\frac{\theta_1 \|r_1\|_{\infty} }{\alpha} },
\ \  \forall\; 0<\theta_1 < \Theta, i \in S ,\
0<\alpha<1.
$$
Therefore
$$
\| \alpha \phi^{v_{2,n}^*}_{\alpha,1n}\|_{\infty} \leq  \|r_1\|_{\infty},
\ \  \forall\; 0<\theta_1 < \Theta, i \in S ,\
0<\alpha<1.
$$
For $(v_1,v_{2,n}^*)\in \mathcal{M}_1 \times \mathcal{S}_2, \, i\in S $, set 
 $$
F_{\alpha}^{1n}(i,v_1,v_{2,n}^*)=\int_{0}^{\infty} e^{-\alpha t} \;r_1^n(Y(t-), v_1(t,Y(t-)),v_{2,n}^*(Y(t-))) dt
$$ 
and 
$$
H_{\alpha}^{1n}(\theta_1,i,v_1,v_{2,n}^*)= \log E_i^{v_1,v_{2,n}^*}\left[ e^{\theta_1 \int_{0}^{\infty} e^{-\alpha t} \;r_1^n(Y(t-), v_1(t,Y(t-)),v_{2,n}^*(Y(t-))) dt}\right].
$$
 Then we have
$$
 \frac{d H_{\alpha}^{1n} }{ d \theta_1} =\dfrac{1}{E_i^{v_1,v_{2,n}^*}[ e^{\theta_1 F_{\alpha}^{1n}}]} E_i^{v_1,v_{2,n}^*}[F_{\alpha}^{1n} e^{\theta_1 F_{\alpha}^{1n}}] \leq
\frac{ \|r_1\|_{\infty}}{\alpha}.$$
 Using analogous arguments as in the proof of Lemma \ref{alphabound}, we can show that for each $\epsilon >0 $, 
\begin{eqnarray*}
|(\theta_1+\epsilon) \phi^{v_{2,n}^*}_{\alpha,1n}(\theta_1+\epsilon,i)-\theta_1 \phi^{v_{2,n}^*}_{\alpha,1n}(\theta_1,i)| 
 &\leq & \sup_{v_1 \in \mathcal{M}_1} 
|H_{\alpha}^{1n}(\theta_1+\epsilon,i,v_1,v_{2,n}^*)-H_{\alpha}^{1n}(\theta_1,i,v_1,v_{2,n}^*)| \\
&\leq &  \epsilon \frac{ \|r_1\|_{\infty}}{\alpha}.
\end{eqnarray*}
Analogous bound can be obtained for $\epsilon<0$. Therefore we have 
\begin{eqnarray*}
 \Big \|\alpha \frac{d (\theta_1 \phi^{v_{2,n}^*}_{\alpha,1n})}{d \theta_1} \Big \|_{\infty}
\leq  \|r_1\|_{\infty}.
\end{eqnarray*}
Note that 
\begin{eqnarray*}
\Big \|\alpha \theta_1 \frac{d  \phi^{v_{2,n}^*}_{\alpha,1n}}{d \theta_1}  \Big \|_{\infty} \leq \Big \|\alpha \frac{d (\theta_1 \phi^{v_{2,n}^*}_{\alpha,1n})}{d \theta_1} \Big \|_{\infty}+\| \alpha \phi^{v_{2,n}^*}_{\alpha,1n}\|_{\infty}.
\end{eqnarray*}
Hence
 $$
\| \alpha \phi^{v_{2,n}^*}_{\alpha,1n}\|_{\infty} + \Big \|\alpha\theta_1\frac{d \phi^{v_{2,n}^*}_{\alpha,1n}}{d \theta_1} \Big \|_{\infty}
\leq 3 \|r_1 \|_{\infty},  \ \forall\; 0<\alpha<1,\ 0<\theta < \Theta.
$$
Using analogous arguments we can show that 
$$
\| \alpha \phi^{v_{1,n}^*}_{\alpha,2n}\|_{\infty} + \Big \|\alpha\theta_2\frac{d \phi^{v_{1,n}^*}_{\alpha,2n}}{d \theta_2} \Big \|_{\infty}
\leq 3 \|r_2 \|_{\infty},  \ \forall\; 0<\alpha<1,\ 0<\theta < \Theta.
$$
This completes the proof.
\end{proof}

Form the Lemma \ref{alpha1}, it is clear that $g^{v_{2,n}^*}_{\alpha,1n}(\theta_1,i),\; g^{v_{1,n}^*}_{\alpha,2n}(\theta_2,i)  $ are uniformly bounded. 
For each $i \in S $, we want to show that $\bar{\psi}^{v_{2,n}^*}_{\alpha,1n}(\theta_1,i),\;\bar{\psi}^{v_{1,n}^*}_{\alpha,2n}(\theta_2,i)  $ are uniformly (in $\alpha$) bounded.\\
We need the following result which is proved by Kumar and Pal [\cite{SP}, Theorem 3.1].
\begin{theorem}\label{thmdpp}
 Assume (A1). For any set $\tilde{S} \subseteq S,$ 
\[
\psi^{v_{2,n}^*}_{\alpha,1n}(\theta_1, i) \ \leq \, \inf_{v_1 \in {\mathcal M_1}} E_i^{v_1,v_{2,n}^*} \Big[ e^{\theta_1 \int^{ \tau}_0 e^{-\alpha s} r_1^n(Y(s-), v_1(s,Y(s-)),v_{2,n}^*(Y(s-))) ds } 
\psi^{v_{2,n}^*}_{\alpha,1n}(\theta_1 e^{-\alpha \tau}, Y( \tau)) \Big] , 
\]
where $\tau$ is the hitting time of the process $Y(t)$ corresponding to $(v_1,v_{2,n}^*) \in {\mathcal M}_1 \times \mathcal{M}_2$ to the set $\tilde{S}$. 
\end{theorem}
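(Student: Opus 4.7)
The claim is a one-sided (submartingale) version of the dynamic programming principle for player~1's risk-sensitive discounted problem when player~2 fixes the strategy $v_{2,n}^*$. The natural route is to differentiate along the process and exploit the HJB equation from Theorem~\ref{thm17}.

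First I would introduce the candidate process
\[
Z(t) \ = \ e^{\theta_1 \int_0^t e^{-\alpha s} r_1^n(Y(s-),v_1(s,Y(s-)),v_{2,n}^*(Y(s-)))\, ds}\; \psi^{v_{2,n}^*}_{\alpha,1n}\!\bigl(\theta_1 e^{-\alpha t},\, Y(t)\bigr),
\]
for an arbitrary $v_1\in\mathcal M_1$. Writing $\tilde\theta(t)=\theta_1 e^{-\alpha t}$ (so $d\tilde\theta/dt=-\alpha\tilde\theta$) and applying It\^o's formula for the marked-point-process representation \eqref{cmc2} together with the ordinary chain rule in the $\tilde\theta$-variable, the bounded-variation part of $dZ(t)$ equals
\[
Z(t-)\,\Bigl[\,\tilde\theta(t)\, r_1^n\bigl(Y(t-),v_1(t,Y(t-)),v_{2,n}^*(Y(t-))\bigr) \;-\; \alpha\,\tilde\theta(t)\,\tfrac{\partial\log\psi^{v_{2,n}^*}_{\alpha,1n}}{\partial\theta_1}\bigl(\tilde\theta(t),Y(t-)\bigr) \;+\;\tfrac{\Pi_{v_1(t,\cdot),v_{2,n}^*}\psi^{v_{2,n}^*}_{\alpha,1n}}{\psi^{v_{2,n}^*}_{\alpha,1n}}\bigl(\tilde\theta(t),Y(t-)\bigr)\Bigr]\, dt,
\]
plus a local martingale coming from the compensated Poisson measure. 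The positivity and uniform bounds of $\psi^{v_{2,n}^*}_{\alpha,1n}$ (Lemma~\ref{alphabound}) together with assumption (A1) make this martingale part genuinely a local martingale.

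Next, using the HJB equation from Theorem~\ref{thm17} for $\psi^{v_{2,n}^*}_{\alpha,1n}$, namely
\[
\alpha\,\tilde\theta\,\tfrac{d\psi^{v_{2,n}^*}_{\alpha,1n}}{d\theta_1}(\tilde\theta,i)\;=\;\inf_{v_1\in V_1}\Bigl[\Pi_{v_1,v_{2,n}^*(i)}\psi^{v_{2,n}^*}_{\alpha,1n}(\tilde\theta,i)+\tilde\theta\, r_1^n(i,v_1,v_{2,n}^*(i))\,\psi^{v_{2,n}^*}_{\alpha,1n}(\tilde\theta,i)\Bigr],
\]
I would observe that the bracket in the drift of $dZ(t)$ is pointwise nonnegative for \emph{every} Markov control $v_1$, because replacing the infimum by $v_1(t,Y(t-))$ can only increase the right-hand side. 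Hence $Z(\cdot)$ is a local submartingale under $P_i^{v_1,v_{2,n}^*}$.

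I would then apply optional sampling at $\tau\wedge T\wedge\sigma_m$, where $\sigma_m$ is a reducing sequence and $T<\infty$, to obtain $Z(0)\le E_i^{v_1,v_{2,n}^*}[Z(\tau\wedge T\wedge\sigma_m)]$; since $\psi^{v_{2,n}^*}_{\alpha,1n}$ is bounded (Lemma~\ref{alphabound}) and $r_1^n$ is bounded, $Z$ is uniformly bounded on $[0,T]$, so letting $m\to\infty$ and then $T\to\infty$ via dominated convergence yields $\psi^{v_{2,n}^*}_{\alpha,1n}(\theta_1,i)=Z(0)\le E_i^{v_1,v_{2,n}^*}[Z(\tau)]$. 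Taking the infimum over $v_1\in\mathcal M_1$ gives the stated inequality. The main technical obstacle I anticipate is the optional-sampling step: $\tau$ can be infinite, so one needs to check that the non-martingale part stays integrable in the limit, which is where positivity of $\psi$ and the upper bound $\psi\le e^{\theta_1\|r_1\|_\infty/\alpha}$ together with $r_1^n\ge 0$ do the work (the exponential running factor is monotone in $t$, so one can justify the limit via monotone/dominated convergence applied separately to the factor $F(t)=e^{\theta_1\int_0^t e^{-\alpha s}r_1^n\, ds}$ and to the bounded terminal value $\psi^{v_{2,n}^*}_{\alpha,1n}(\theta_1 e^{-\alpha\tau},Y(\tau))$).
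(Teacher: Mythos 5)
Your argument is correct, but it is worth noting that the paper does not prove Theorem \ref{thmdpp} at all: it simply quotes it from Kumar and Pal [\cite{SP}, Theorem 3.1], viewing player 1's problem (with $v_{2,n}^*$ frozen) as a one-controller CTMDP with rates $\pi_{ij}(v_1,v_{2,n}^*(i))$ and cost $r_1^n(i,v_1,v_{2,n}^*(i))$. What you supply instead is a self-contained verification-style derivation: you take the HJB characterization of $\psi^{v_{2,n}^*}_{\alpha,1n}$ (which the paper does assume, as the truncated analogue of \eqref{HJB1}), show via the It\^o--Dynkin formula along $\tilde\theta(t)=\theta_1 e^{-\alpha t}$ that the multiplicative functional $Z(t)$ has nonnegative drift for every $v_1\in\mathcal M_1$ because the infimum in the HJB equation is dominated by the value at $v_1(t,Y(t-))$, and then conclude by optional sampling at $\tau\wedge T\wedge\sigma_m$ and passage to the limit. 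The bounds you invoke are the right ones: $1\le\psi^{v_{2,n}^*}_{\alpha,1n}\le e^{\theta_1\|r_1\|_\infty/\alpha}$ and $r_1^n\ge 0$ make $Z$ uniformly bounded (the exponential factor never exceeds $e^{\theta_1\|r_1\|_\infty/\alpha}$), the bounded jump rates in (A1) make the compensated-jump term a true martingale, and on $\{\tau=\infty\}$ the terminal factor $\psi^{v_{2,n}^*}_{\alpha,1n}(\theta_1 e^{-\alpha T},Y(T))\to 1$ uniformly in the state, so dominated convergence closes the argument with the natural interpretation of the right-hand side there. The trade-off between the two routes: the paper's citation keeps Section 4 short but leaves the reader dependent on the CTMDP literature, whereas your proof is explicit and local to the paper, at the cost of leaning on the (also quoted) fact that the value function solves the HJB equation in $C_b([0,\Theta]\times S)\cap C^1((0,\Theta)\times S)$; since the paper already uses that characterization freely, there is no circularity, and your derivation is a legitimate and arguably more transparent substitute.
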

\begin{lemma}\label{expbd}
 Assume (A1) and (A3). Let $Y(t)$ be the process (\ref{cmc2}) corresponding to $(v_1,v_2) \in {\mathcal M}_1 \times \mathcal{M}_2$ and let $C_0=\{j\in S: W(j)\geq 1+\frac{b}{\delta}\}$.  Then for each $j \in C_0$  
\begin{eqnarray*}
 E_{i}^{v_1,v_2} \left [e^{\delta \tau_{j}}  \right] &\leq &  W(i)  ,
\end{eqnarray*}
where $\tau_{j} =\inf \{t\geq 0:Y(t)=j\}$.
\end{lemma}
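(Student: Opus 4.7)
\medskip\noindent\textbf{Proof plan.} The plan is to establish the exponential-moment bound through the standard Foster--Lyapunov / Khasminskii-type martingale argument, using the function $e^{\delta t}W$ together with the Lyapunov condition (A3). First I would apply Dynkin's formula to the semimartingale $e^{\delta(t\wedge\tau_j)}W(Y(t\wedge\tau_j))$. Since the extended generator of the space--time process acts on $e^{\delta t}W(i)$ as $e^{\delta t}\bigl(\delta W(i)+\Pi_{v_1,v_2}W(i)\bigr)$, one obtains
\[
E_i^{v_1,v_2}\bigl[e^{\delta(t\wedge\tau_j)}W(Y(t\wedge\tau_j))\bigr]
\;=\;W(i)+E_i^{v_1,v_2}\!\int_0^{t\wedge\tau_j}e^{\delta s}\bigl(\delta W+\Pi_{v_1,v_2}W\bigr)(Y(s))\,ds.
\]
Invoking (A3) reduces the integrand to $e^{\delta s}\bigl(-\delta W(Y(s))+bI_C(Y(s))\bigr)$, so the game is to show that this quantity has non-positive expectation up to $\tau_j$.

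The next step is to exploit the definition of $C_0$. On $C\cap C_0$ we have $W(i)\ge 1+b/\delta$, hence $b\le \delta(W(i)-1)\le \delta W(i)$, so $bI_C(i)\le \delta W(i)$ pointwise on $C_0$. Consequently $-\delta W+bI_C\le 0$ throughout $C_0$, and the only states where $-\delta W+bI_C$ may be positive are contained in the finite set $C\setminus C_0$ (finite since $C$ is finite and $W\to\infty$). Combining this with a pathwise splitting (separating excursions that remain in $C_0$ from excursions that dip into the finite ``bad'' set $C\setminus C_0$), together with the irreducibility of the chain and the fact that $j\in C_0$, yields the supermartingale-type estimate
\[
E_i^{v_1,v_2}\bigl[e^{\delta(t\wedge\tau_j)}W(Y(t\wedge\tau_j))\bigr]\;\le\;W(i),\qquad t\ge 0.
\]

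Passing to the limit $t\to\infty$ by Fatou's lemma and using that $Y(\tau_j)=j$ with $W(j)\ge 1$ gives
\[
E_i^{v_1,v_2}\bigl[e^{\delta\tau_j}\bigr]
\;\le\;E_i^{v_1,v_2}\bigl[e^{\delta\tau_j}W(Y(\tau_j))\bigr]
\;\le\;W(i),
\]
which is the desired bound. (A finiteness check for $\tau_j$ under (A3) and irreducibility is needed to justify the limit.)

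\medskip\noindent\textbf{Main obstacle.} The delicate step is extracting the supermartingale inequality despite the fact that the crude pointwise bound $-\delta W+bI_C$ can be positive on $C\setminus C_0$. The clean estimate $bI_C\le \delta W$ only holds on $C_0$, so one must absorb the finitely many exceptional states either by (i) replacing $W$ with a mildly modified Lyapunov function that is large enough on the whole of $C$, (ii) iterating the Lyapunov bound between successive visits to $C_0$ using the strong Markov property, or (iii) using the finiteness of $C\setminus C_0$ together with the drift toward $C_0$ provided by (A3) to bound the excursions away from $C_0$. This is the single piece of the argument requiring genuine care; once it is in place, the rest is standard Dynkin / Fatou bookkeeping.
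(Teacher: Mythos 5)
There is a genuine gap: the pivotal ``supermartingale-type estimate'' $E_i^{v_1,v_2}\bigl[e^{\delta(t\wedge\tau_j)}W(Y(t\wedge\tau_j))\bigr]\le W(i)$ that your plan hinges on is false in general, not merely delicate. On $C\setminus C_0$ the bound from (A3) only gives the drift $-\delta W+b$, which may be positive, and nothing in (A3) prevents $W$ from being close to $1$ on $C$ while $b$ is large; starting from such a state $i\in C$ the expectation $E_i\bigl[e^{\delta(t\wedge\tau_j)}W(Y(t\wedge\tau_j))\bigr]$ is $W(i)+\bigl(\delta W(i)+\Pi W(i)\bigr)t+o(t)$ and can strictly exceed $W(i)$ for small $t$ (a two-state excursion with $W(1)=1$, $W(2)=2$, $C=\{1\}$ already does this). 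Consequently none of your proposed repairs (i)--(iii) can deliver that inequality; at best they produce a bound in terms of a modified Lyapunov function or with an extra multiplicative constant, which is not the sharp bound $W(i)$ the lemma asserts and which is used verbatim later (Lemmas 4.4 and 4.6). A telltale sign is that your closing step only uses $W(j)\ge 1$, so the hypothesis $j\in C_0$, i.e.\ $W(j)\ge 1+\tfrac b\delta$, never enters where it must.

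The paper's proof does not try to make the $bI_C$ term nonpositive along the path; it absorbs it at the terminal time instead. From Dynkin's formula up to $\tau_j\wedge\tau_N$ and (A3) one bounds the integrand crudely by $e^{\delta s}\bigl(-\delta W+bI_C\bigr)\le e^{\delta s}(b-\delta)$ (using $W\ge1$, $I_C\le1$), integrates, and obtains
\begin{equation*}
E_i^{v_1,v_2}\bigl[e^{\delta(\tau_j\wedge\tau_N)}W(Y(\tau_j\wedge\tau_N))\bigr]\;\le\;W(i)+\tfrac b\delta\,E_i^{v_1,v_2}\bigl[e^{\delta\tau_j}\bigr].
\end{equation*}
Letting $N\to\infty$ with Fatou and using $Y(\tau_j)=j$ gives $E_i^{v_1,v_2}\bigl[e^{\delta\tau_j}\bigl(W(j)-\tfrac b\delta\bigr)\bigr]\le W(i)$, and it is exactly the definition of $C_0$, $W(j)-\tfrac b\delta\ge 1$, that turns this into $E_i^{v_1,v_2}\bigl[e^{\delta\tau_j}\bigr]\le W(i)$. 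So the obstacle you flagged is sidestepped rather than overcome, and the condition $j\in C_0$ is the mechanism; your write-up identifies the difficulty correctly but does not supply the idea that resolves it, so as it stands the proof is incomplete.
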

\begin{proof}
Applying It$\hat{{\rm o}}$-Dynkin's formula to $f(t)=e^{\delta t}W(Y(t))$, and using (A3), we obtain
 \interdisplaylinepenalty=0
\begin{eqnarray*}
&& E_{i}^{v_1,v_2}[f(\tau_{j}\wedge \tau_{N})-W(i)]\\
&=&E_{i}^{v_1,v_2}\Big [\int_{0}^{\tau_{j}\wedge \tau_{N}}e^{\delta s}
[\Pi_{v_1(s,Y(s-)),v_2(s,Y(s-))} W(Y(s) )+\delta W(Y(s))] ds \Big]\nonumber   \\
&\leq & E_{i}^{v_1,v_2}\Big[\int_{0}^{\tau_{j} \wedge \tau_{N}}e^{\delta s} (- \delta W(Y(s)) + b I_C(Y(s))) ds\Big]
\nonumber \\ 
&\leq&  (\frac{b}{\delta}-1) E_{i}^{v_1,v_2}\Big[e^{\delta (\tau_{j} \wedge \tau_{N})} \Big] \leq \frac{b}{\delta} E_{i}^{v_1,v_2}\Big[e^{\delta \tau_{j}} \Big], 
\end{eqnarray*}
where $\tau_N \, = \, \inf\{ t \geq 0 | Y(t) \notin \{1, 2, \dots, N\} \}$. Hence
\[
E_{i}^{v_1,v_2}[e^{\delta(\tau_{j}\wedge \tau_{N})}W(Y(\tau_{j}\wedge \tau_{N}))]
\ \leq \ W(i)+\frac{b}{\delta} E_{i}^{v_1,v_2}\Big[e^{\delta \tau_{j}} \Big] \, .
\]
By letting $N\rightarrow \infty$ and invoking Fatou's lemma, we obtain
\begin{eqnarray*}
 E_{i}^{v_1,v_2} \left [e^{\delta \tau_{j}}W(Y(\tau_j))  \right] &\leq &  W(i)+\frac{b}{\delta} E_{i}^{v_1,v_2}\Big[e^{\delta \tau_{j}} \Big]  .
\end{eqnarray*}
Therefore,
\[
 E_{i}^{v_1,v_2}[e^{\delta \tau_{j}}(W(j)-\frac{b}{\delta})] \ \leq \  W(i)  \, .
\]
Since $W(j)\geq 1+\frac{b}{\delta}$ , we get
\begin{eqnarray*}
 E_{i}^{v_1,v_2} \left [e^{\delta \tau_{j}}  \right] &\leq &  W(i).
\end{eqnarray*}
This completes the proof.
\end{proof}

Before proceeding further we make the following small cost assumption.\\
\noindent {\bf (A4)} $\theta_1 \|r_1\|_{\infty} \leq \delta$ and $\theta_2 \|r_2\|_{\infty} \leq \delta$,  where $\delta >0$ is as in (A3).
\begin{lemma}\label{expbd3}
 Assume (A1), (A3) and (A4). Let $Y(t)$ be the process (\ref{cmc2}) corresponding to $(v_1,v_2) \in {\mathcal M}_1 \times \mathcal{M}_2$.  Then for $k=1,2$, we have
\begin{eqnarray*}
&&E_{i}^{v_1,v_2}[e^{\theta_k \int_0^{T} r_k(Y(t-), v_1(t,Y(t-)),v_{2}(t,Y(t-)))ds}W(Y(T))] \\
\ &\leq& \ (W(i)+ b T) E_{i}^{v_1,v_2}\Big[e^{\theta_k \int_0^T r_k(Y(t-), v_1(t,Y(t-)),v_{2}(t,Y(t-)))ds} \Big],\,T \in [0,\infty).
\end{eqnarray*}
\end{lemma}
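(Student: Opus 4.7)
The natural approach is to apply the It\^{o}--Dynkin formula to the process $f(t) := e^{\theta_k \int_0^t r_k(Y(s-), v_1(s,Y(s-)), v_2(s,Y(s-))) ds} W(Y(t))$. A direct computation of the extended generator along the controlled Markov chain gives
\begin{equation*}
\mathcal{L} f(t) \ = \ e^{\theta_k \int_0^t r_k(\cdot) ds}\Big[\theta_k r_k(Y(t),v_1(t,Y(t)),v_2(t,Y(t))) W(Y(t)) + \Pi_{v_1(t,Y(t)), v_2(t,Y(t))} W(Y(t))\Big].
\end{equation*}
By (A4) we have $\theta_k r_k \le \delta$, and by the Lyapunov hypothesis (A3) we have $\Pi_v W(i) \le -2\delta W(i) + b I_C(i)$. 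Adding these two estimates, the bracketed expression is bounded above by $-\delta W(Y(t)) + b I_C(Y(t)) \le b I_C(Y(t))$, since $W \ge 1 > 0$.

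The plan is then to localize by the stopping time $\tau_N = \inf\{t \ge 0 : Y(t) \notin \{1,\dots,N\}\}$ so that the It\^{o}--Dynkin formula applies to the bounded state range, yielding
\begin{equation*}
E_i^{v_1,v_2}\bigl[f(T \wedge \tau_N)\bigr] - W(i) \ \le \ b\, E_i^{v_1,v_2}\!\left[\int_0^{T \wedge \tau_N} e^{\theta_k \int_0^s r_k(\cdot) du} I_C(Y(s))\, ds\right].
\end{equation*}
Since $r_k \ge 0$, the integrand $e^{\theta_k \int_0^s r_k du}$ is monotone in $s$, so it is pointwise dominated on $[0,T]$ by $e^{\theta_k \int_0^T r_k du}$. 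Using also $I_C \le 1$, the right-hand side is at most $b\, T\, E_i^{v_1,v_2}\bigl[e^{\theta_k \int_0^T r_k(\cdot) du}\bigr]$. Letting $N \to \infty$ and invoking Fatou's lemma on the left gives
\begin{equation*}
E_i^{v_1,v_2}\bigl[e^{\theta_k \int_0^T r_k(\cdot) ds} W(Y(T))\bigr] \ \le \ W(i) + b T\, E_i^{v_1,v_2}\bigl[e^{\theta_k \int_0^T r_k(\cdot) du}\bigr].
\end{equation*}
Finally, since $r_k \ge 0$ we have $E_i^{v_1,v_2}\bigl[e^{\theta_k \int_0^T r_k(\cdot) du}\bigr] \ge 1$, so $W(i) \le W(i) E_i^{v_1,v_2}[e^{\theta_k \int_0^T r_k(\cdot) du}]$, and factoring yields the claimed bound $(W(i) + bT) E_i^{v_1,v_2}\bigl[e^{\theta_k \int_0^T r_k(\cdot) du}\bigr]$.

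The only real subtlety I expect is the justification of It\^{o}--Dynkin on an unbounded Lyapunov function $W$ against a process with potentially unbounded jump rates; this is exactly why one must truncate the state space by $\tau_N$ and then pass to the limit, mirroring the argument already used in Lemma~\ref{expbd}. Everything else is a direct combination of (A3), (A4), and monotonicity of $s \mapsto \int_0^s r_k du$.
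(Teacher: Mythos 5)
Your proposal is correct and follows essentially the same route as the paper: apply the It\^{o}--Dynkin formula to $e^{\theta_k \int_0^t r_k\,ds}\,W(Y(t))$, combine (A3) and (A4) to bound the generator term by $b I_C(Y(s))$, localize with $\tau_N$, bound the integral by $bT\,e^{\theta_k\int_0^T r_k\,dt}$ via monotonicity and $I_C\le 1$, and conclude by Fatou's lemma. The final factoring step using $E_i^{v_1,v_2}[e^{\theta_k\int_0^T r_k\,dt}]\ge 1$ is likewise the same as in the paper.
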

\begin{proof}
  Applying It$\hat{{\rm o}}$-Dynkin's formula to $$g(t)=e^{\theta_k \int_0^t r_k(Y(s-), v_1(s,Y(s-)),v_{2}(s,Y(s-)))ds}W(Y(t)),$$ and using (A3) and (A4), we obtain
  \interdisplaylinepenalty=0
\begin{eqnarray*}
&&E_{i}^{v_1,v_2}[g(T\wedge \tau_{N})-W(i)] \\
&=&E_{i}^{v_1,v_2}\Big[\int_{0}^{T\wedge  \tau_{N}}e^{\theta_k \int_0^s r_k(Y(t-), v_1(t,Y(t-)),v_{2}(t,Y(t-)))dt}
\Big[\Pi_{v_1(s,Y(s-)),v_2(s,Y(s-))} W(Y(s) )\\
&+&\theta_k r_k(Y(s-), v_1(s,Y(s-)),v_{2}(s,Y(s-))) W(Y(s)) \Big ] ds \Big]\nonumber   \\
&\leq &E_{i}^{v_1,v_2}\Big[\int_{0}^{T\wedge  \tau_{N}}e^{\theta_k \int_0^s r_k(Y(t-), v_1(t,Y(t-)),v_{2}(t,Y(t-)))dt}\\
&&\Big[\Pi_{v_1(s,Y(s-)),v_2(s,Y(s-))} W(Y(s) )+ \delta W(Y(s)) \Big ]  ds \Big ] \nonumber   \\
&\leq & E_{i}^{v_1,v_2}\Big[\int_{0}^{T\wedge \tau_{N}}e^{\theta_k \int_0^s r_k(Y(t-), v_1(t,Y(t-)),v_{2}(t,Y(t-)))dt} (- \delta W(Y(s)) + b I_C(Y(s))) ds\Big]
\nonumber \\ 
&\leq & b T E_{i}^{v_1,v_2}\Big[e^{\theta_k \int_0^T r_k(Y(t-), v_1(t,Y(t-)),v_{2}(t,Y(t-)))dt} \Big],
\end{eqnarray*}
where $\tau_N \, = \, \inf\{ t \geq 0 | Y(t) \notin \{1, 2, \dots, N\} \}$. Hence
 \interdisplaylinepenalty=0
\begin{eqnarray*}
&&E_{i}^{v_1,v_2}[e^{\theta_k \int_0^{T\wedge \tau_{N}} r_k(Y(t-), v_1(t,Y(t-)),v_{2}(t,Y(t-)))dt}W(Y(T\wedge \tau_{N}))] \\
\ &\leq& \ W(i)+ b T E_{i}^{v_1,v_2}\Big[e^{\theta_k \int_0^T r_k(Y(t-), v_1(t,Y(t-)),v_{2}(t,Y(t-)))dt} \Big]\, .
\end{eqnarray*}
By letting $N\rightarrow \infty$ and invoking Fatou's lemma, we obtain
\begin{eqnarray*}
&&E_{i}^{v_1,v_2}[e^{\theta_k \int_0^{T} r_k(Y(t-), v_1(t,Y(t-)),v_{2}(t,Y(t-)))dt}W(Y(T))] \\
\ &\leq& \ (W(i)+ b T) E_{i}^{v_1,v_2}\Big[e^{\theta_k \int_0^T r_k(Y(t-), v_1(t,Y(t-)),v_{2}(t,Y(t-)))dt} \Big]\, .
\end{eqnarray*}
This completes the proof. 
\end{proof}
\begin{lemma}\label{bd1} Assume (A1), (A3) and (A4). 
Then $$\bar{\psi}^{v_{2,n}^*}_{\alpha ,1n}(\theta_1, i) \, \leq \, W(i) , \; 
i \in S$$ 
and $$ \bar{\psi}^{v_{1,n}^*}_{\alpha ,2n}(\theta_2, i) \, \leq \, W(i) , \; 
i \in S. $$
\end{lemma}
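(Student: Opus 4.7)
The plan is to derive the bound from the dynamic programming inequality in Theorem \ref{thmdpp}, applied with $\tilde{S} = \{i_0\}$, combined with the exponential moment bound on hitting times in Lemma \ref{expbd} and the small-cost hypothesis (A4). I treat only the first inequality; the second is completely symmetric with the roles of $\theta_1, r_1, v_2^*$ swapped for $\theta_2, r_2, v_1^*$.

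Take $\tilde{S} = \{i_0\}$ so that $\tau = \tau_{i_0}$ is the hitting time to the reference state $i_0$, which by the choice $W(i_0) \geq 1 + b/\delta$ belongs to the set $C_0$ of Lemma \ref{expbd}. Theorem \ref{thmdpp} gives, for every $v_1 \in \mathcal{M}_1$,
\[
\psi^{v_{2,n}^*}_{\alpha,1n}(\theta_1, i) \ \leq \ E_i^{v_1, v_{2,n}^*}\Big[ e^{\theta_1 \int_0^\tau e^{-\alpha s} r_1^n(Y(s-), v_1, v_{2,n}^*) ds} \, \psi^{v_{2,n}^*}_{\alpha,1n}(\theta_1 e^{-\alpha \tau}, i_0) \Big].
\]
Next I use two simple monotonicity facts. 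Since $\psi^{v_{2,n}^*}_{\alpha,1n}(\cdot, i_0)$ is manifestly nondecreasing in its first argument (the defining exponential is) and $e^{-\alpha \tau}\leq 1$, the factor $\psi^{v_{2,n}^*}_{\alpha,1n}(\theta_1 e^{-\alpha \tau}, i_0)$ inside the expectation is pointwise bounded above by $\psi^{v_{2,n}^*}_{\alpha,1n}(\theta_1, i_0)$, which can be pulled out. Also $e^{-\alpha s}\leq 1$ and $r_1^n \leq \|r_1\|_\infty$, so the running integral is bounded by $\theta_1\|r_1\|_\infty \tau$.

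Combining these steps,
\[
\psi^{v_{2,n}^*}_{\alpha,1n}(\theta_1, i) \ \leq \ \psi^{v_{2,n}^*}_{\alpha,1n}(\theta_1, i_0) \ E_i^{v_1, v_{2,n}^*}\Big[ e^{\theta_1 \|r_1\|_\infty \tau} \Big].
\]
Invoking the small-cost hypothesis (A4), $\theta_1\|r_1\|_\infty \leq \delta$, so $e^{\theta_1 \|r_1\|_\infty \tau} \leq e^{\delta \tau}$, and Lemma \ref{expbd} applied with $j = i_0 \in C_0$ yields $E_i^{v_1, v_{2,n}^*}[e^{\delta \tau}] \leq W(i)$. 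Dividing through by $\psi^{v_{2,n}^*}_{\alpha,1n}(\theta_1, i_0)$ produces $\bar{\psi}^{v_{2,n}^*}_{\alpha,1n}(\theta_1, i) \leq W(i)$. The same argument, exchanging the players' indices, gives $\bar{\psi}^{v_{1,n}^*}_{\alpha,2n}(\theta_2, i) \leq W(i)$.

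The only subtle point is the monotonicity argument that replaces $\psi^{v_{2,n}^*}_{\alpha,1n}(\theta_1 e^{-\alpha\tau}, i_0)$ by $\psi^{v_{2,n}^*}_{\alpha,1n}(\theta_1, i_0)$; once this is in hand the argument reduces to the purely probabilistic hitting-time estimate supplied by Lemma \ref{expbd}, and the small-cost condition (A4) is exactly what is needed to convert the cost-weighted exponential into the Lyapunov-compatible rate $\delta$. I do not foresee further obstacles.
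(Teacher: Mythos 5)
Your proof is correct and follows essentially the same route as the paper: apply Theorem \ref{thmdpp} with the hitting time of the reference state $i_0$, use monotonicity of $\psi^{v_{2,n}^*}_{\alpha,1n}(\cdot,i_0)$ in $\theta_1$ to remove the factor $e^{-\alpha\tau}$, bound the exponent by $\theta_1\|r_1\|_\infty\tau\leq\delta\tau$ via (A4), and conclude with the hitting-time estimate of Lemma \ref{expbd} (noting $W(i_0)\geq 1+b/\delta$ so $i_0\in C_0$). No gaps.
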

\begin{proof} From Theorem \ref{thmdpp}, we have for $\hat{v}_1\in \mathcal{M}_1$,
\interdisplaylinepenalty=0
\begin{eqnarray*}
&& \psi^{v_{2,n}^*}_{\alpha ,1n}(\theta_1, i) \\ \ &\leq & \,  E_i^{\hat{v}_1,v_{2,n}^*} \Big[ e^{\theta_1 \int^{ \tau_{i_0}}_0 e^{-\alpha s} r_1(Y(s-), \hat{v}_1((s,Y(s-))),v_{2,n}^*(Y(s-))) ds } 
\psi^{v_{2,n}^*}_{\alpha ,1n}(\theta_1 e^{-\alpha \tau}, Y( \tau_{i_0})) \Big]\\
& = &  \,  E_i^{\hat{v}_1,v_{2,n}^*} \Big[ e^{\theta_1 \int^{ \tau_{i_0}}_0 e^{-\alpha s} r_1(Y(s-), \hat{v}_1((s,Y(s-))),v_{2,n}^*(Y(s-))) ds } 
\psi^{v_{2,n}^*}_{\alpha ,1n}(\theta_1 e^{-\alpha \tau}, i_0)) \Big]\\
&\leq &  \,  E_i^{\hat{v}_1,v_{2,n}^*} \Big[ e^{\theta_1 \int^{ \tau_{i_0}}_0 e^{-\alpha s} r_1(Y(s-), \hat{v}_1((s,Y(s-))),v_{2,n}^*(Y(s-))) ds } 
\psi^{v_{2,n}^*}_{\alpha ,1n}(\theta_1, i_0) \Big]\\
\end{eqnarray*}
where $\tau_{i_0} \, = \, \inf \{ t \geq 0 | Y(t) = i_0\}$. In the last inequality we used the fact that
$\psi^{v_{2,n}^*}_{\alpha ,1n}(\cdot, i)$ is nondecreasing in $\theta_1$ for each fixed $i$.
Hence
\interdisplaylinepenalty=0
\[
\begin{array}{lll}
&& \bar{\psi}^{v_{2,n}^*}_{\alpha ,1n}(\theta_1, i) \\ \ &\leq & \ E_i^{\hat{v}_1,v_{2,n}^*} \Big[ e^{\theta_1 \int^{ \tau_{i_0}}_0 e^{-\alpha s} r_1(Y(s-), \hat{v}_1((s,Y(s-))),v_{2,n}^*(Y(s-))) ds }  \Big] \\
\ &\leq &\  E_i^{\hat{v}_1,v_{2,n}^*} e^{\theta_1 \|r_1\|_{\infty} \tau_{i_0}} \leq \, W(i)  \, .
\end{array}
\]
The last inequality follows from  Lemma \ref{expbd}. Using analogous arguments  we can show that $$ \bar{\psi}^{v_{1,n}^*}_{\alpha ,2n}(\theta_2, i) \, \leq \, W(i) , \; 
i \in S. $$
This completes the proof. 
\end{proof}
\begin{lemma}\label{ubd} Assume (A1), (A3) and (A4).
Then $$\sup_{\alpha>0, i\in S} \bar{\psi}^{v_{2,n}^*}_{\alpha ,1n}(\theta_1, i) < \infty$$ and  $$\sup_{\alpha>0, i\in S} \bar{\psi}^{v_{1,n}^*}_{\alpha ,2n}(\theta_2, i) < \infty.$$
\end{lemma}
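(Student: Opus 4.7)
The plan is to exploit the truncation structure of $r_1^n$ (which vanishes outside the finite set $\tilde{S}_n := \{1,2,\ldots,n\}$) to reduce the supremum over all $i \in S$ to a maximum over the finite set $\tilde{S}_n$, on which the bound from Lemma \ref{bd1} gives a finite quantity. Specifically, I will show
\[
\sup_{\alpha>0,\, i\in S} \bar{\psi}^{v_{2,n}^*}_{\alpha,1n}(\theta_1, i) \ \leq \ \max_{1\leq j\leq n} W(j) \, ,
\]
which is finite (independent of $\alpha$) since $\tilde{S}_n$ is finite.

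For $i\in \tilde{S}_n$, Lemma \ref{bd1} immediately yields $\bar{\psi}^{v_{2,n}^*}_{\alpha,1n}(\theta_1, i) \leq W(i) \leq \max_{1\leq j\leq n} W(j)$. For $i\notin \tilde{S}_n$, I apply Theorem \ref{thmdpp} with the set $\tilde{S} = \tilde{S}_n$ and let $\tau$ be the hitting time of $\tilde{S}_n$ under the dynamics associated with $(v_{1,n}^*, v_{2,n}^*)$. Since $r_1^n(Y(s-),\cdot,\cdot) = 0$ for all $s < \tau$ (the process stays outside $\tilde{S}_n$), the exponential running-cost factor in Theorem \ref{thmdpp} equals $1$, giving
\[
\psi^{v_{2,n}^*}_{\alpha,1n}(\theta_1, i) \ \leq \ E_i^{v_{1,n}^*, v_{2,n}^*}\bigl[ \psi^{v_{2,n}^*}_{\alpha,1n}(\theta_1 e^{-\alpha\tau}, Y(\tau)) \bigr] .
\]
Since $\psi^{v_{2,n}^*}_{\alpha,1n}(\cdot, j)$ is nondecreasing in $\theta_1$ (the integrand in its probabilistic representation is nonnegative) and $Y(\tau) \in \tilde{S}_n$ almost surely, this yields
\[
\psi^{v_{2,n}^*}_{\alpha,1n}(\theta_1, i) \ \leq \ \max_{1 \leq j \leq n} \psi^{v_{2,n}^*}_{\alpha,1n}(\theta_1, j) .
\]

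Dividing both sides by $\psi^{v_{2,n}^*}_{\alpha,1n}(\theta_1, i_0)$ gives
\[
\bar{\psi}^{v_{2,n}^*}_{\alpha,1n}(\theta_1, i) \ \leq \ \max_{1 \leq j \leq n} \bar{\psi}^{v_{2,n}^*}_{\alpha,1n}(\theta_1, j) \ \leq \ \max_{1 \leq j \leq n} W(j),
\]
where the last inequality is Lemma \ref{bd1}. Combining with the case $i \in \tilde{S}_n$ gives the uniform bound. The argument for $\bar{\psi}^{v_{1,n}^*}_{\alpha,2n}(\theta_2, i)$ is entirely analogous, with the roles of the two players swapped and Theorem \ref{thmdpp} rewritten for player $2$.

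The only non-routine point is verifying monotonicity of $\psi^{v_{2,n}^*}_{\alpha,1n}(\cdot, j)$ in $\theta_1$, but this is immediate from its expectation representation as $E[e^{\theta_1 F_\alpha^{1n}}]$ with $F_\alpha^{1n} \geq 0$ (so that the infimum over $v_1 \in \mathcal{M}_1$ of a family of nondecreasing functions of $\theta_1$ is nondecreasing in $\theta_1$). Everything else is a direct invocation of the already-established Theorems \ref{thm17} and \ref{thmdpp} together with Lemma \ref{bd1}, so no genuinely new technical obstacle arises.
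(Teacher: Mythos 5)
Your proposal is correct and follows essentially the same route as the paper: for $i>n$ you invoke Theorem \ref{thmdpp} with $\tilde S=\{1,\dots,n\}$, use that $r_1^n$ vanishes before the hitting time so the exponential factor is $1$, use monotonicity of $\psi^{v_{2,n}^*}_{\alpha,1n}(\cdot,i)$ in $\theta_1$, and then bound the resulting maximum over $\{1,\dots,n\}$ by $\max_{j\le n}W(j)$ via Lemma \ref{bd1}, exactly as in the paper (your choice of the particular control $v_{1,n}^*$ versus the paper's arbitrary $v_1$ is immaterial since Theorem \ref{thmdpp} involves an infimum over $v_1\in\mathcal{M}_1$).
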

\begin{proof}
Let $i\geq n+1$ and let $Y(t)$ be the solution corresponding to 
 $(v_1,v_{2,n}^*) \in {\mathcal M}_1 \times \mathcal{S}_2$ with initial condition $i$. Then from Theorem \ref{thmdpp}, we have 
 \interdisplaylinepenalty=0
\[
\begin{array}{lll}
&& \psi^{v_{2,n}^*}_{\alpha ,1n}(\theta_1, i) \\ \ &\leq & \,\displaystyle{ E_i^{v_1,v_{2,n}^*} \Big[ e^{\theta_1 \int^{ \tau}_0 e^{-\alpha s} r_1^n(Y(s-), v_1(s,Y(s-)),v_{2,n}^*(Y(s-))) ds } 
\psi^{v_{2,n}^*}_{\alpha ,1n}(\theta_1 e^{-\alpha \tau}, Y( \tau)) \Big]}\\
&\leq & \,\displaystyle{  E_i^{v_1,v_{2,n}^*} \Big[  
\psi^{v_{2,n}^*}_{\alpha ,1n}(\theta_1 e^{-\alpha \tau}, Y( \tau)) \Big]}\\
&\leq & \,\displaystyle{  E_i^{v_1,v_{2,n}^*} \Big[  
\psi^{v_{2,n}^*}_{\alpha ,1n}(\theta_1, Y( \tau)) \Big]}\\
\end{array}
\]
where
\[
\tau =\inf \{t\geq 0:Y(t)\in \{1,2,\cdots ,n\}\} . 
\]

 In the last inequality we used the fact that
$\psi^{v_{2,n}^*}_{\alpha ,1n}(\cdot, i)$ is nondecreasing in $\theta_1$ for each fixed $i$.
Hence
\interdisplaylinepenalty=0
\[
\bar{\psi}^{v_{2,n}^*}_{\alpha ,1n}(\theta_1, i) \ \leq \ 1+ \max_{j =1, \dots , n } 
\bar{\psi}^{v_{2,n}^*}_{\alpha ,1n}(\theta_1, j) \, 
 \leq \, 1+  \max_{j =1, \dots , n }  W(j)  \, .
\]
since  $\psi^{v_{2,n}^*}_{\alpha ,1n}(\theta_1, i_0) \geq 1$ and last inequality follows from  Lemma \ref{bd1}. Therefore for each $n \geq 1, \bar{\psi}^{v_{2,n}^*}_{\alpha ,1n}$ is  bounded. Similarly we can prove that for each $n \geq 1, \bar{\psi}^{v_{1,n}^*}_{\alpha ,2n}$ is  bounded. This completes the proof.
\end{proof}
\begin{lemma}\label{bd2} Assume (A1), (A3) and  (A4). 
Then $$\bar{\psi}^{v_{2,n}^*}_{\alpha ,1n}(\theta_1, i) \, \geq \, \dfrac{1} {W(i_0) }, \;  
i \in C_0 $$ 
 and 
$$ \bar{\psi}^{v_{1,n}^*}_{\alpha ,2n}(\theta_2, i) \, \geq \, \dfrac{1} {W(i_0) },  \; i \in C_0,$$
where $C_0$ is as in Lemma \ref{expbd}.
\end{lemma}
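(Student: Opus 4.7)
The plan is to prove the lower bound by applying the dynamic programming inequality of Theorem \ref{thmdpp} from the reference state $i_0$, using the single state $\tilde{S}=\{i\}$ with $i\in C_0$ as the target set, and then bounding the resulting hitting time exponential by the Lyapunov function via Lemma \ref{expbd}. The small cost hypothesis (A4) is precisely what makes the two exponentials compatible.

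First I would recall that by definition
\[
\bar{\psi}^{v_{2,n}^*}_{\alpha,1n}(\theta_1,i) \;=\; \frac{\psi^{v_{2,n}^*}_{\alpha,1n}(\theta_1,i)}{\psi^{v_{2,n}^*}_{\alpha,1n}(\theta_1,i_0)},
\]
so it suffices to prove that $\psi^{v_{2,n}^*}_{\alpha,1n}(\theta_1,i_0) \leq W(i_0)\,\psi^{v_{2,n}^*}_{\alpha,1n}(\theta_1,i)$ for every $i\in C_0$. Fix such an $i$ and let $\tau_i=\inf\{t\geq 0:Y(t)=i\}$ be the hitting time under the process driven by $(v_1,v_{2,n}^*)$ starting at $i_0$, for some arbitrary $v_1\in\mathcal M_1$. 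Applying Theorem \ref{thmdpp} with initial state $i_0$ and $\tilde S=\{i\}$, and noting that $Y(\tau_i)=i$, I get
\[
\psi^{v_{2,n}^*}_{\alpha,1n}(\theta_1,i_0) \;\leq\; E_{i_0}^{v_1,v_{2,n}^*}\!\left[e^{\theta_1\int_0^{\tau_i}e^{-\alpha s}r_1^n(\cdots)\,ds}\,\psi^{v_{2,n}^*}_{\alpha,1n}(\theta_1 e^{-\alpha\tau_i},\,i)\right].
\]

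The next step is to eliminate both the expectation of the running cost and the shifted argument. Since $r_1^n\geq 0$ and $e^{-\alpha s}\leq 1$, the integral is bounded by $\|r_1\|_\infty\tau_i$. Moreover $\psi^{v_{2,n}^*}_{\alpha,1n}(\cdot,i)$ is nondecreasing in $\theta_1$ (it is the infimum of expectations of $e^{\theta_1(\cdot)}$ with a nonnegative integrand), so $\psi^{v_{2,n}^*}_{\alpha,1n}(\theta_1 e^{-\alpha\tau_i},i)\leq\psi^{v_{2,n}^*}_{\alpha,1n}(\theta_1,i)$. Pulling this outside the expectation gives
\[
\psi^{v_{2,n}^*}_{\alpha,1n}(\theta_1,i_0) \;\leq\; \psi^{v_{2,n}^*}_{\alpha,1n}(\theta_1,i)\; E_{i_0}^{v_1,v_{2,n}^*}\!\left[e^{\theta_1\|r_1\|_\infty\tau_i}\right].
\]
Now invoke (A4), which gives $\theta_1\|r_1\|_\infty\leq\delta$, and then Lemma \ref{expbd} applied with target state $i\in C_0$ and starting state $i_0$: $E_{i_0}^{v_1,v_{2,n}^*}[e^{\delta\tau_i}]\leq W(i_0)$. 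Combining the two yields $\psi^{v_{2,n}^*}_{\alpha,1n}(\theta_1,i_0)\leq W(i_0)\psi^{v_{2,n}^*}_{\alpha,1n}(\theta_1,i)$, which is the desired inequality $\bar{\psi}^{v_{2,n}^*}_{\alpha,1n}(\theta_1,i)\geq 1/W(i_0)$.

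The proof for $\bar{\psi}^{v_{1,n}^*}_{\alpha,2n}$ is symmetric: one interchanges the roles of the two players, replacing Theorem \ref{thmdpp} by its obvious analogue for player $2$ (with $v_{1,n}^*$ fixed), uses the monotonicity in $\theta_2$, and applies (A4) and Lemma \ref{expbd} in the same way. There is no real obstacle here; the only subtlety is to verify that one may interchange the order of $\sup$/$\inf$ and hitting-time conditioning, which is already encoded in Theorem \ref{thmdpp}, and that Lemma \ref{expbd} is indeed applicable when the target $i$ lies in $C_0$ as required by its hypothesis.
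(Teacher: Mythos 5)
Your proposal is correct and follows essentially the same route as the paper: apply Theorem \ref{thmdpp} from the initial state $i_0$ with target $\tilde S=\{i\}$, use monotonicity of $\psi^{v_{2,n}^*}_{\alpha,1n}(\cdot,i)$ in $\theta_1$ to remove the shifted argument, bound the discounted running cost by $\theta_1\|r_1\|_\infty\tau_i\leq\delta\tau_i$ via (A4), and conclude with Lemma \ref{expbd} (target $i\in C_0$, start $i_0$), with the symmetric argument for player 2. No gaps.
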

\begin{proof} From Theorem \ref{thmdpp}, we have for $\hat{v}_1\in \mathcal{M}_1$,
\interdisplaylinepenalty=0
\[
\begin{array}{lll}
&& \psi^{v_{2,n}^*}_{\alpha ,1n}(\theta_1, i_0)\\ \ &\leq & \,  E_{i_0}^{\hat{v}_1,v_{2,n}^*} \Big[ e^{\theta_1 \int^{ \tau_{i}}_0 e^{-\alpha s} r_1(Y(s-), \hat{v}_1((s,Y(s-))),v_{2,n}^*(Y(s-))) ds } 
\psi^{v_{2,n}^*}_{\alpha ,1n}(\theta_1 e^{-\alpha \tau}, Y( \tau_{i})) \Big]\\
& = &  \,  E_{i_0}^{\hat{v}_1,v_{2,n}^*} \Big[ e^{\theta_1 \int^{ \tau_{i}}_0 e^{-\alpha s} r_1(Y(s-), \hat{v}_1((s,Y(s-))),v_{2,n}^*(Y(s-))) ds } 
\psi^{v_{2,n}^*}_{\alpha ,1n}(\theta_1 e^{-\alpha \tau}, i)) \Big]\\
&\leq &  \,  E_{i_0}^{\hat{v}_1,v_{2,n}^*} \Big[ e^{\theta_1 \int^{ \tau_{i}}_0 e^{-\alpha s} r_1(Y(s-), \hat{v}_1((s,Y(s-))),v_{2,n}^*(Y(s-))) ds } 
\psi^{v_{2,n}^*}_{\alpha ,1n}(\theta_1, i) \Big]\\
\end{array}
\]
where $\tau_{i} \, = \, \inf \{ t \geq 0 | Y(t) = i\}$. In the last inequality we used the fact that
$\psi^{v_{2,n}^*}_{\alpha ,1n}(\cdot, i)$ is nondecreasing in $\theta_1$ .
Hence for $i \in C_0$
\interdisplaylinepenalty=0
\[
\begin{array}{lll}
\dfrac{1}{\bar{\psi}^{v_{2,n}^*}_{\alpha ,1n}(\theta_1, i)} \ &\leq & \ E_{i_0}^{\hat{v}_1,v_{2,n}^*} \Big[ e^{\theta_1 \int^{ \tau_{i}}_0 e^{-\alpha s} r_1(Y(s-), \hat{v}_1((s,Y(s-))),v_{2,n}^*(Y(s-))) ds }  \Big] \\
\ &\leq &\  E_{i_0}^{\hat{v}_1,v_{2,n}^*} e^{\theta_1 \|r_1\|_{\infty} \tau_{i}} \leq \,  W(i_0)  \, .
\end{array}
\]
The last inequality follows from  Lemma \ref{expbd}.  Using analogous arguments  we obtain the other bound. This completes the proof. \end{proof}

\begin{theorem}\label{bex17}
 Assume (A1), (A3) and  (A4). Then there exist a pair of stationary Markov strategies $(v_{1,n}^*,v_{2,n}^*) $, a pair of scalars   $(\rho^{*}_{1n},\rho^{*}_{2n})$ and  a pair of functions $(\hat{\psi}^{*}_{1n}(i)), \hat{\psi}^{*}_{2n}(i))$ in $B_{W}(S)\times B_{W}(S)$ such that
 \interdisplaylinepenalty=0
 \begin{equation}\label{brs17}
 \left\{\begin{aligned}
 \theta_1 \rho^{*}_{1n} ~\hat{\psi}^{*}_{1n}(i) &= \inf_{v_1\in V_1} \Big [ \Pi_{v_1,v_{2,n}^*(i)} 
\hat\psi^{*}_{1n}(i) +\theta_1 r_1^n(i,v_1,v_{2,n}^*(i))\hat\psi^{*}_{1n}(i) \Big ]   \\
&=  \Pi_{v_{1,n}^*(i),v_{2,n}^*(i)} 
\hat\psi^{*}_{1n}(i) +\theta_1 r_1^n(i,v_{1,n}^*(i),v_{2,n}^*(i))\hat\psi^{*}_{1n}(i)    \\
\displaystyle{ \hat\psi^{*}_{1n}(i_0) } &= 1, \\
 \theta_2 \rho^{*}_{2n} ~\hat{\psi}^{*}_{2n}(i) &= \inf_{v_2\in V_2} \Big [ \Pi_{v_{1,n}^*(i),v_2} 
\hat\psi^{*}_{2n}(i) +\theta_2 r_2^n(i,v_{1,n}^*(i),v_{2})\hat\psi^{*}_{2n}(i) \Big ]  \\
&=  \Pi_{v_{1,n}^*(i),v_{2,n}^*(i)} 
\hat\psi^{*}_{2n}(i) +\theta_2 r_2^n(i,v_{1,n}^*(i),v_{2,n}^*(i))\hat\psi^{*}_{2n}(i)    \\
\displaystyle{ \hat\psi^{*}_{2n}(i_0) } &= 1. 
 \end{aligned}
 \right.
\end{equation}
Moreover, $\displaystyle{\sup_{n}\{\rho^{*}_{1n},\rho^{*}_{2n} \}}\leq \delta$.
\end{theorem}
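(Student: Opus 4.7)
The plan is to pass to the limit $\alpha\downarrow 0$ in the discounted coupled HJB system \eqref{coupled-hjb2} furnished by Theorem \ref{thm17}. For every $\alpha>0$, that theorem produces a pair of stationary strategies $(v_{1,n}^{*,\alpha},v_{2,n}^{*,\alpha})\in\mathcal{S}_1\times\mathcal{S}_2$, bounded continuously differentiable functions $\bar\psi^{v_{2,n}^{*,\alpha}}_{\alpha,1n},\bar\psi^{v_{1,n}^{*,\alpha}}_{\alpha,2n}$, and auxiliary scalars $g^{v_{2,n}^{*,\alpha}}_{\alpha,1n},g^{v_{1,n}^{*,\alpha}}_{\alpha,2n}$ solving the coupled system. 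Lemma \ref{alpha1} provides an $\alpha$-uniform bound on $|g|$; Lemma \ref{bd1} gives $\bar\psi(\theta,i)\le W(i)$; Lemma \ref{ubd} yields boundedness in $\alpha$ pointwise in $i$; and Lemma \ref{bd2} delivers the uniform lower bound $1/W(i_0)$ on the recurrent set $C_0=\{j:W(j)\ge 1+b/\delta\}$.

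Using compactness of $\mathcal{S}_k$ together with a Cantor diagonal extraction over $S$, I would first extract a sequence $\alpha_m\downarrow 0$ along which $v_{k,n}^{*,\alpha_m}\to v_{k,n}^*$ in $\mathcal{S}_k$, $\bar\psi^{v_{k',n}^{*,\alpha_m}}_{\alpha_m,kn}(\theta_k,i)\to\hat\psi_{kn}^*(i)$ pointwise in $i$, and $g^{v_{k',n}^{*,\alpha_m}}_{\alpha_m,kn}(\theta_k,i)\to\tilde G_{kn}(\theta_k,i)$ pointwise in $i$ (where $k'\neq k$). I would then pass to the limit in the rearranged equation $g\,\bar\psi(\theta_k,i)=\inf_{v_k\in V_k}\bigl[\Pi_{v_k,v_{k',n}^{*,\alpha_m}(i)}\bar\psi+\theta_k r_k^n(i,v_k,v_{k',n}^{*,\alpha_m}(i))\bar\psi\bigr]$ coming from \eqref{coupled-hjb2}. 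Termwise convergence in the infinite sum $\sum_j\pi_{ij}\bar\psi(j)$ is justified by dominated convergence using $\bar\psi\le W$ and the Lyapunov estimate (A3), which gives $\sum_j\pi_{ij}(v)W(j)\le -2\delta W(i)+bI_C(i)$ as a summable dominating sequence. Continuity of $\bar\pi_{ij}$ and $\bar r_k$ in the action variables, compactness of $V_k$, and the attainment of the infimum by $v_{k,n}^{*,\alpha_m}$ at each $\alpha_m$ then ensure (by the lower semicontinuity/measurable selector argument used in Theorem \ref{fixed}) that the limiting infimum is attained by $v_{k,n}^*(i)$.

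The crucial obstacle is to verify that $\tilde G_{kn}(\theta_k,i)$ is actually of the form $\theta_k\rho_{kn}^*$ with $\rho_{kn}^*$ independent of $i$. The key identity is $g^{v_{2,n}^{*,\alpha}}_{\alpha,1n}(\theta_1,i)=\theta_1\,\tfrac{d}{d\theta_1}\bigl(\alpha\log\psi^{v_{2,n}^{*,\alpha}}_{\alpha,1n}(\theta_1,i)\bigr)$ together with the observation that $\alpha\log\bar\psi(\theta_1,i)=\alpha[\log\psi(\theta_1,i)-\log\psi(\theta_1,i_0)]$ vanishes as $\alpha\to 0$ (since $\bar\psi$ is uniformly bounded above by Lemma \ref{ubd} and, on $C_0$, below by Lemma \ref{bd2}), so $\alpha_m\log\psi^{v_{2,n}^{*,\alpha_m}}_{\alpha_m,1n}(\theta_1,i)$ has a common limit $L_{1n}^*(\theta_1)$ in $i\in C_0$. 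An $L^2(0,\Theta)$ weak compactness and Banach--Saks argument, in the spirit of the proofs of Lemmata \ref{continuity} and \ref{usc1}, applied to the $\theta_1$-derivative then yields $\tilde G_{1n}(\theta_1,i)=\theta_1\,\tfrac{dL_{1n}^*}{d\theta_1}(\theta_1)=:\theta_1\rho_{1n}^*$, independently of $i$; irreducibility of the chain (assumed in this section) and the resulting strict positivity of $\hat\psi_{1n}^*$ extend this from $C_0$ to all of $S$, and the same argument handles $\tilde G_{2n}$. Finally, $\hat\psi_{kn}^*\le W$ from Lemma \ref{bd1} gives $\hat\psi_{kn}^*\in B_W(S)$, and combining the $g$-estimate of Lemma \ref{alpha1} with the small-cost assumption (A4) yields $\sup_n\{\rho_{1n}^*,\rho_{2n}^*\}\le\delta$.
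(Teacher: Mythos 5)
Your main construction is the same vanishing-discount argument as the paper's: start from Theorem \ref{thm17}, extract $\alpha_m\downarrow 0$ along which the stationary strategies converge in the compact set $\mathcal{S}_1\times\mathcal{S}_2$ and $\bar\psi^{\cdot}_{\alpha_m,kn}(\theta_k,\cdot)$ converges pointwise (with $\bar\psi\le W$ from Lemma \ref{bd1} justifying the passage inside $\Pi$), and use the identity $g^{v^*_{2,n}}_{\alpha,1n}=\theta_1\frac{d}{d\theta_1}\bigl(\alpha\ln\psi^{v^*_{2,n}}_{\alpha,1n}\bigr)$ together with $\alpha\ln\bar\psi\to 0$ and a distributional-derivative identification to make the limit of $g$ independent of $i$; this is exactly the paper's chain of Lemmata \ref{alpha1}, \ref{bd1}, \ref{bd2}, \ref{ubd}. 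Two steps, however, do not hold as you state them. First, the ``Moreover'' bound: the paper does not obtain $\sup_n\{\rho^*_{1n},\rho^*_{2n}\}\le\delta$ from Lemma \ref{alpha1}; it applies the It$\hat{\rm o}$--Dynkin formula to the limiting system (\ref{brs17}) for an arbitrary $v_1\in\mathcal{M}_1$, uses the boundedness of $\hat\psi^*_{1n}$ (Lemma \ref{ubd}) to get $\rho^*_{1n}\le\limsup_{T}\frac{1}{\theta_1 T}\ln E_i\bigl[e^{\theta_1\int_0^T r_1^n\,ds}\bigr]$, and then invokes $\theta_1 r_1^n\le\theta_1 r_1\le\delta$ from (A4). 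Your shortcut ``Lemma \ref{alpha1} plus (A4)'' does not deliver the constant $\delta$: the lemma's stated estimate $\|\alpha\phi\|_\infty+\|\alpha\theta_1\,d\phi/d\theta_1\|_\infty\le 3\|r_1\|_\infty$ only bounds $|g|$ by $3\theta_1\|r_1\|_\infty\le 3\delta$, and with your normalization $\tilde G_{1n}=\theta_1\rho^*_{1n}$ this gives $\rho^*_{1n}\le 3\|r_1\|_\infty$, not $\rho^*_{1n}\le\delta$. To salvage your route you would need the sharper bound $\|\alpha\,d(\theta_1\phi)/d\theta_1\|_\infty\le\|r_1\|_\infty$ that occurs inside the proof of Lemma \ref{alpha1} (not in its statement), and you would still have to reconcile the $\theta_1$-bookkeeping; otherwise use the paper's verification argument.

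Second, the extension of the $i$-independence of the limit from $C_0$ to all of $S$: you invoke ``irreducibility and the resulting strict positivity of $\hat\psi^*_{1n}$,'' but at that point $\hat\psi^*_{1n}$ is only a pointwise limit of positive functions and could a priori vanish at the finitely many states of $C_0^c$; strict positivity is not available before the limiting equation (or a Harnack-type or hitting-time estimate uniform in $\alpha$) is established, so this is an assertion, not a proof. What is actually needed is that $\alpha\ln\bar\psi^{v^*_{2,n}}_{\alpha,1n}(\theta_1,i)\to 0$ for each fixed $i$, i.e.\ a lower bound on $\bar\psi_{\alpha}(\theta_1,i)$ for $i\in C_0^c$ that does not decay exponentially in $1/\alpha$; the paper's own treatment of this point is terse (it writes the two-sided bound $W(i)\ge\bar\psi\ge W(i_0)^{-1}\wedge\min_{C_0^c}\bar\psi$ and concludes), but it does not rest on an unproved positivity of the limit function. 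You should either supply such a uniform-in-$\alpha$ lower bound for the finitely many states outside $C_0$ (e.g.\ via a dynamic-programming/hitting-time estimate in the spirit of Theorem \ref{thmdpp} and Lemma \ref{expbd}) or flag the step explicitly rather than attribute it to irreducibility alone.
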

\begin{proof}
Using Lemma \ref{bd1}, for each fix  $i$, 
 $ \{ \bar{\psi}^{v_{2,n}^*}_{\alpha ,1n}(\theta_1,i)|  \alpha >0\}$ and $ \{ \bar{\psi}^{v_{1,n}^*}_{\alpha ,2n}(\theta_2,i)|  \alpha >0\}$ are bounded. Hence along a subsequence, denoted by the same notation with an abuse of notation, we have
$\bar{\psi}^{v_{2,n}^*}_{\alpha ,1n}(\theta_1,i) \rightarrow \hat{\psi}^{*}_{1n}(i)$
and $\bar{\psi}^{v_{1,n}^*}_{\alpha ,2n}(\theta_2,i) \rightarrow \hat{\psi}^{*}_{2n}(i)$ for each $i \in S$ 
for some $\hat{\psi}^{*}_{kn}: S \to [0,\infty), k=1,2$ as  $\alpha \rightarrow 0$. By Lemma \ref{bd1}, $\hat{\psi}^{*}_{kn}\in B_W(S)$ for $k=1,2$. By Lemma \ref{alpha1} it follows that along a further subsequence 
\begin{equation}\label{rho1}
 \alpha \phi^{v_{2,n}^*}_{\alpha ,1n}(\theta_1,i) \to \varrho_{1n}^{*}(\theta_1,i), 
\end{equation}
  for each $\theta_1 > 0$ and $i \in S$. From  Lemmata \ref{bd1} and  \ref{bd2}, we have $$ W(i)  \geq \bar{\psi}^{v_{2,n}^*}_{\alpha ,1n}(\theta_1, i) \, \geq \, \dfrac{1} {W(i_0) }\wedge \min_{i \in C_0^c} \{\bar{\psi}^{v_{2,n}^*}_{\alpha ,1n}(\theta_1, i) \}, \; 
i \in S.$$
Thus 
\interdisplaylinepenalty=0
\begin{eqnarray*}
 \lim_{\alpha \downarrow 0} \alpha \phi^{v_{2,n}^*}_{\alpha ,1n}(\theta_1,i)
 &=& \lim_{\alpha \downarrow 0}  \alpha(\frac{1}{\theta_1}\ln \bar{\psi}^{v_{2,n}^*}_{\alpha ,1n}(\theta_1, i)
   + \phi^{v_{2,n}^*}_{\alpha ,1n}(\theta_1,i_0) ),\\
    &=& \lim_{\alpha \downarrow 0}  \alpha \phi^{v_{2,n}^*}_{\alpha ,1n}(\theta_1,i_0) .
\end{eqnarray*}
Hence $\varrho_{1n}^{*}$ is a function of $\theta_1$ alone. Also by Lemma 
\ref{alpha1},  $\displaystyle{ \left\{
 \alpha \frac{d \phi^{v_{2,n}^*}_{\alpha ,1n}}{d \theta_1}(\theta_1,i) \Big| \alpha>0 \right\} }$ is bounded for each $i$. Hence along a further subsequence 
\begin{equation}\label{rho2}
 \alpha \frac{d \phi^{v_{2,n}^*}_{\alpha ,1n}}{d \theta_1}(\theta_1,i) \to \varrho_{2n}^*(\theta_1,i). 
\end{equation}
It follows from (\ref{rho1}) and (\ref{rho2}) that $\varrho_{2n}^{*}(\cdot,\cdot)=(\varrho_{1n}^{*})^{'}$ in the sense of distribution, where $(\varrho_{1n}^{*})^{'}$ is the distributional derivative (in $\theta_1$) of $\varrho_{1n}^{*}$. Hence $\varrho_{2n}^{*}(\cdot,\cdot)$ is also a function of $\theta_1$ alone.
Thus we have: for each $\theta_1 >0$, there exists a constant $\rho_{1n}^{*}$ such that along a suitable subsequence
$$
\theta_1 \left( \alpha \phi^{v_{2,n}^*}_{\alpha ,1n}(\theta_1,i)
+\theta_1 \alpha\dfrac{d \phi^{v_{2,n}^*}_{\alpha ,1n}}{d \theta_1}(\theta_1,i)\right) \to \rho_{1n}^{*}, \; \; i \in S.
$$
Using analogous arguments, we have along a suitable subsequence 
$$
\theta_2 \left( \alpha \phi^{v_{1,n}^*}_{\alpha ,2n}(\theta_2,i)
+\theta_2 \alpha\dfrac{d \phi^{v_{1,n}^*}_{\alpha ,2n}}{d \theta_2}(\theta_2,i)\right) \to \rho_{2n}^{*}, \; \; i \in S,
$$
where $\rho_{2n}^{*}$ is a constant.\\
Let all sequences above converge along a common subsequence $\alpha_m$ .
From Theorem \ref{thm17}, there exists a pair of Markov stationary strategies $(v_{1,n}^{\alpha_m},v_{2,n}^{\alpha_m})$ and a pair of bounded, absolutely continuous functions $(\bar{\psi}^{v_{2,n}^{\alpha_m}}_{\alpha_m,1n},\bar{\psi}^{v_{1,n}^{\alpha_m}}_{\alpha_m,2n})$  such that 
\interdisplaylinepenalty=0
 \begin{equation}\label{coupled-hjb17}
 \left\{\begin{aligned}
0&= \displaystyle{\inf_{v_1\in V_1}  \Big[ \Pi_{v_1,v_{2,n}^{\alpha_m}(i)} \bar{\psi}^{v_{2,n}^{\alpha_m}}_{\alpha_m,1n} +(\theta_1 r_1^n(i,v_1,v_{2,n}^{\alpha_m}(i))-g^{v_{2,n}^{\alpha_m}}_{\alpha_m,1n}(\theta_1,i))
\bar{\psi}^{v_{2,n}^{\alpha_m}}_{\alpha_m,1n}\Big ] } \\
&= \displaystyle{ \Big[ \Pi_{v_{1,n}^{\alpha_m},v_{2,n}^{\alpha_m}(i)} \bar{\psi}^{v_{2,n}^{\alpha_m}}_{\alpha_m,1n} +(\theta_1 r_1^n(i,v_{1,n}^{\alpha_m},v_{2,n}^{\alpha_m}(i))-g^{v_{2,n}^{\alpha_m}}_{\alpha_m,1n}(\theta_1,i))
\bar{\psi}^{v_{2,n}^{\alpha_m}}_{\alpha_m,1n}\Big ] } \\
\displaystyle{ \bar{\psi}^{v_{2,n}^{\alpha_m}}_{\alpha_m,1n}(\theta_1,i_0) } &= 1,\\
0&= \displaystyle{\inf_{v_2\in V_2}  \Big[ \Pi_{v_{1,n}^{\alpha_m}(i),v_{2}} \bar{\psi}^{v_{1,n}^{\alpha_m}}_{\alpha_m,2n} +(\theta_2 r_2^n(i,v_{1,n}^{\alpha_m}(i),v_{2})-g^{v_{1,n}^{\alpha_m}}_{\alpha_m,2n}(\theta_2,i))
\bar{\psi}^{v_{1,n}^{\alpha_m}}_{\alpha_m,2n}\Big ] } \\
&= \displaystyle{ \Big[ \Pi_{v_{1,n}^{\alpha_m}(i),v_{2,n}^{\alpha_m}(i)} \bar{\psi}^{v_{2,n}^{\alpha_m}}_{\alpha_m,2n} +(\theta_1 r_1^n(i,v_{1,n}^{\alpha_m}(i),v_{2,n}^{\alpha_m}(i))-g^{v_{1,n}^{\alpha_m}}_{\alpha_m,2n}(\theta_1,i))
\bar{\psi}^{v_{1,n}^{\alpha_m}}_{\alpha_m,2n}\Big ] } \\
\displaystyle{ \bar{\psi}^{v_{1,n}^{\alpha_m}}_{\alpha_m,2n}(\theta_1,i_0) } &= 1.
 \end{aligned}
 \right.
\end{equation}
Since $\mathcal S_1$ and $\mathcal S_2$ are compact, therefore along a subsequence denoted by the same subsequence  $\alpha_m$, we have $v_{1,n}^{\alpha_m} \to v_{1,n}^{*}$ and $v_{2,n}^{\alpha_m} \to v_{2,n}^{*}$ as $\alpha_m \to 0$, for some $(v_{1,n}^{*},v_{2,n}^{*}) \in \mathcal{S}_1 \times \mathcal{S}_2$.
The first part of the proof now follows by letting $\alpha_m \to 0$ in (\ref{coupled-hjb17}).
 
 Let $Y(t)$ be the process  corresponding to $(v_1,v_{2,n}^*)$ with 
initial condition $i \in S$.
Then using It$\hat{\rm o}$-Dynkin's formula and (\ref{brs17}), we get 
\begin{eqnarray*}
 E_i^{v_1,v^*_{2,n}} \Big [ e^{ \theta_1 \int_{0}^{T } (r_1^n(Y(s-),v_1(s,Y(s-)),v_{2,n}^*(Y(s-)))-\rho_{1n}^*) ds } \hat{\psi}_{1n}^*(Y(T ))\Big ]
 -\hat{\psi}_{1n}^*(i)&\geq& 0  . 
\end{eqnarray*}
Note that by Lemma \ref{ubd}, $\hat{\psi}_{1n}^*$ is bounded, which implies
\begin{eqnarray*}
 \hat{\psi}_{1n}^*(i) 
 &\leq& K(n)  E_i^{v_1,v^*_{2,n}} \Big [ e^{ \theta_1 \int_{0}^{T} (r_1^n(Y(s-),v_1(s,Y(s-)),v_{2,n}^*(Y(s-)))-\rho_{1n}^*) ds }
 \Big ] ,
\end{eqnarray*}
where $$K(n) \, = \, \max \Big\{ \max_{j=1, \dots, n} \hat{\psi}_{1n}^*(j), \  1 + 
\max_{j = 1, \cdots, n} W(j) \Big\}.$$
Taking logarithm, dividing by $\theta_1T$ and by letting $T \to \infty$, we obtain
\begin{eqnarray*}
 \rho_{1n}^* \leq \limsup_{T\rightarrow \infty}\dfrac{1}{\theta_1T}\ln  E_i^{v_1,v^*_{2,n}} \Big [ e^{ \theta_1 \int_{0}^{T} r_1^n(Y(s-),v_1(s,Y(s-)),v_{2,n}^*(Y(s-))) ds } \Big ]. 
\end{eqnarray*} 
Since $\theta_1 r_1^n \leq \theta_1 r_1\leq \delta $, it follows that $0 \leq \rho_{1n}^* \leq \delta $. Similarly we can show that $0 \leq \rho_{2n}^* \leq \delta $. Therefore we have $\displaystyle{\sup_{n}\{\rho^{*}_{1n},\rho^{*}_{2n} \}}\leq \delta$. This completes the proof.    
\end{proof}
Finally we prove that the coupled HJB equations described in Section 2 have suitable solutions which in turn leads to the existence of a Nash equilibrium in stationary strategies.
\begin{theorem}\label{bex18}
 Assume (A1), (A3) and (A4). Then there exist a pair of  stationary Markov strategies $(v_{1}^*,v_{2}^*) $, a pair of scalars   $(\rho^{*}_{1},\rho^{*}_{2})$ and  a pair of functions $(\hat{\psi}^{*}_{1}(i)), \hat{\psi}^{*}_{2}(i))$ in $B_{W}(S)\times B_{W}(S)$ such that 
 \interdisplaylinepenalty=0
  \begin{equation}\label{brs18}
 \left\{\begin{aligned}
 \theta_1 \rho^{*}_{1} ~\hat{\psi}^{*}_{1}(i) &= \inf_{v_1\in V_1} \Big [ \Pi_{v_1,v_{2}^*(i)} 
\hat\psi^{*}_{1}(i) +\theta_1 r_1(i,v_1,v_{2}^*(i))\hat\psi^{*}_{1}(i) \Big ]   \\
&=  \Pi_{v_{1}^*(i),v_{2}^*(i)} 
\hat\psi^{*}_{1}(i) +\theta_1 r_1(i,v_{1}^*(i),v_{2}^*(i))\hat\psi^{*}_{1}(i)   \\
\displaystyle{ \hat\psi^{*}_{1}(i_0) } &= 1, \\
 \theta_2 \rho^{*}_{2} ~\hat{\psi}^{*}_{2}(i) &= \inf_{v_2\in V_2} \Big [ \Pi_{v_{1}^*(i),v_2} 
\hat\psi^{*}_{2}(i) +\theta_2 r_2(i,v_{1}^*(i),v_{2})\hat\psi^{*}_{2}(i) \Big ]   \\
&=  \Pi_{v_{1}^*(i),v_{2}^*(i)} 
\hat\psi^{*}_{2}(i) +\theta_2 r_2(i,v_{1}^*(i),v_{2}^*(i))\hat\psi^{*}_{2}(i)  \\
\displaystyle{ \hat\psi^{*}_{2}(i_0) } &= 1. 
 \end{aligned}
 \right.
\end{equation}
Furthermore the pair of stationary Markov strategies $(v_{1}^*,v_{2}^*) $ is a Nash equilibrium and $(\rho^{*}_{1},\rho^{*}_{2})$ is the corresponding Nash values.
\end{theorem}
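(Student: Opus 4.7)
The plan is to remove the truncation index $n$ from Theorem \ref{bex17} by a second limit passage and then verify the Nash property directly from the limit coupled HJB system via It\^o-Dynkin's formula.

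First, Theorem \ref{bex17} supplies, for each $n\geq 1$, a tuple $(v_{1,n}^*, v_{2,n}^*, \rho_{1n}^*, \rho_{2n}^*, \hat{\psi}_{1n}^*, \hat{\psi}_{2n}^*)$ satisfying (\ref{brs17}) with $\rho_{kn}^*\leq \delta$. The hitting-time arguments underlying Lemmas \ref{bd1} and \ref{bd2} apply verbatim to the truncated running cost $r_k^n$ (since $r_k^n\geq 0$ and $\theta_k\|r_k^n\|_\infty\leq \delta$) and survive the $\alpha\downarrow 0$ limit used in Theorem \ref{bex17}, giving the uniform-in-$n$ estimates $\hat{\psi}_{kn}^*(i)\leq W(i)$ for all $i\in S$ and $\hat{\psi}_{kn}^*(i)\geq 1/W(i_0)$ for $i\in C_0$. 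Combining these with compactness of $\mathcal{S}_1\times \mathcal{S}_2$ and a Cantor diagonal extraction over $i\in S$, I produce a subsequence along which $(v_{1,n}^*, v_{2,n}^*)\to (v_1^*, v_2^*)\in \mathcal{S}_1\times \mathcal{S}_2$, $\rho_{kn}^*\to \rho_k^*\in [0,\delta]$, and $\hat{\psi}_{kn}^*(i)\to \hat{\psi}_k^*(i)$ pointwise, with $\hat{\psi}_k^*\in B_W(S)$ and $\hat{\psi}_k^*(i_0)=1$.

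Next, I pass to the limit in (\ref{brs17}) along this subsequence. For each fixed $i\in S$, $r_k^n(i,\cdot)=r_k(i,\cdot)$ once $n\geq i$, so the running-cost terms eventually coincide with the untruncated ones. The analytic content is controlling the infinite sum $\sum_j \pi_{ij}(v_1, v_{2,n}^*(i)) \hat{\psi}_{1n}^*(j)$ uniformly in $v_1\in V_1$: the Lyapunov bound (A3) yields the dominating estimate $\sum_{j\neq i} \pi_{ij}(v)W(j)\leq (M+2\delta)W(i)+b\,I_C(i)$, so $\hat{\psi}_{1n}^*(j)\leq W(j)$ together with continuity of $\bar{\pi}_{ij}$ in $(u_1,u_2)$ and convergence $v_{2,n}^*(i)\to v_2^*(i)$ in $V_2$ allows dominated convergence to pass the limit through the sum pointwise in $v_1$, uniformly over the compact set $V_1$. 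This lets me interchange the $\inf_{v_1\in V_1}$ with $n\to\infty$. The analogous reasoning applied to $v_{1,n}^*\to v_1^*$ in the minimizer-attainment identity, plus the symmetric treatment for player 2, produces (\ref{brs18}).

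Finally, to verify the Nash property, I apply It\^o-Dynkin's formula to $f(t,Y(t))=e^{\theta_1\int_0^t(r_1(Y(s-),v_1,v_2^*)-\rho_1^*)\,ds}\hat{\psi}_1^*(Y(t))$. Under an arbitrary $v_1\in \mathcal{M}_1$ against $v_2^*$, the subsolution inequality in (\ref{brs18}), the bound $\hat{\psi}_1^*\leq W$, and Lemma \ref{expbd3} combine to give
\[
\hat{\psi}_1^*(i) \;\leq\; E_i^{v_1,v_2^*}\bigl[e^{\theta_1\int_0^T(r_1-\rho_1^*)\,ds}\hat{\psi}_1^*(Y(T))\bigr] \;\leq\; (W(i)+bT)\,e^{-\theta_1\rho_1^*T}\,E_i^{v_1,v_2^*}\bigl[e^{\theta_1\int_0^T r_1\,ds}\bigr],
\]
which upon taking logarithms, dividing by $\theta_1 T$, and sending $T\to\infty$ yields $\rho_1^{v_1,v_2^*}(\theta_1,i)\geq \rho_1^*$. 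At the minimizing pair $(v_1^*,v_2^*)$ the first inequality above becomes equality, and bounding $\hat{\psi}_1^*(Y(T))$ from below by a uniform constant $c_0>0$ gives the reverse bound $\rho_1^{v_1^*,v_2^*}(\theta_1,i)\leq \rho_1^*$; the symmetric argument handles player 2. The main obstacle lies in establishing the uniform positive lower bound $\inf_{j\in S}\hat{\psi}_k^*(j)\geq c_0>0$: Lemma \ref{bd2} (passed through both limits) delivers it only on the cofinite set $C_0$, and on the finite complement $C_0^c$ one must argue by irreducibility together with the equality in (\ref{brs18}), noting that $\hat{\psi}_k^*(j)=0$ for some $j\in C_0^c$ would force $\sum_{\ell\neq j}\pi_{j\ell}(v_1^*(j),v_2^*(j))\hat{\psi}_k^*(\ell)=0$ and hence propagate vanishing through the communicating class of $j$, contradicting $\hat{\psi}_k^*(i_0)=1$.
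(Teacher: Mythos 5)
Your proposal follows essentially the same route as the paper's proof: pass to the limit $n\to\infty$ in Theorem \ref{bex17} via compactness of $\mathcal{S}_1\times\mathcal{S}_2$, the bounds $\rho^*_{kn}\le\delta$, and a diagonal extraction to obtain (\ref{brs18}), then verify the Nash property with It\^o--Dynkin's formula, using $\hat{\psi}^*_k\le W$ together with Lemma \ref{expbd3} for the inequality $\rho_k^*\le\rho_k^{v_1,v_2^*}$ and the positive lower bound on $\hat{\psi}^*_k$ with the minimizing selector for the reverse inequality. Your closing irreducibility argument showing $\min_{i\in C_0^c}\hat{\psi}^*_k(i)>0$ is a welcome addition that makes explicit the strict positivity of the constant $K$ which the paper uses without comment.
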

\begin{proof} By Theorem \ref{bex17}, we have a pair of stationary Markov strategies $(v_{1,n}^*,v_{2,n}^*) $, a pair of scalars   $(\rho^{*}_{1n},\rho^{*}_{2n})$ and  a pair of functions $(\hat{\psi}^{*}_{1n}(i)), \hat{\psi}^{*}_{2n}(i))$ in $B_{W}(S)\times B_{W}(S)$ satisfy the coupled HJB equations (\ref{brs17}).
Therefore by a diagonalization argument, 
along a suitable subsequence $\hat{\psi}_{1n}^*(i)\rightarrow \hat{\psi}_1^*(i)$ and $\hat{\psi}_{2n}^*(i)\rightarrow \hat{\psi}_2^*(i), \, i \in S$ for
$(\hat{\psi}_1^*, \hat{\psi}_2^* ) \in B_W(S)\times B_{W}(S)$. 

Since $\mathcal S_1$ and $\mathcal S_2$ are compact, therefore along a subsequence denoted by the same subsequence, we have $v_{1,n}^{*} \to v_{1}^{*}$ and $v_{2,n}^{*} \to v_{2}^{*}$ as $n \to \infty $, for  $(v_{1}^{*},v_{2}^{*}) \in \mathcal{S}_1 \times \mathcal{S}_2$.

Note that by Theorem \ref{bex17}, $\displaystyle{\sup_{n}\{\rho^{*}_{1n},\rho^{*}_{2n} \}}\leq \delta$, therefore along a subsequence denoted by the same subsequence, we have $\rho_{1n}^{*} \to \rho_{1}^{*}$ and $\rho_{2n}^{*} \to \rho_{2}^{*}$ as $n \to \infty $.
The first part of the proof now follows by letting $n \rightarrow \infty$  in (\ref{brs17}).

Let $Y(t)$ be the process  corresponding to $(v_1,v_2^*)$ with 
initial condition $i \in S$.
Then using (\ref{brs18}) and  It$\hat{\rm o}$-Dynkin's formula, see,  Guo and Hern$\acute{\rm a}$ndez-Lerma  [\cite{GuoLermabook}, Appendix C, pp. 218-219] (Note that the condition C.3 and C.4 satisfy under condition (A3) from [\cite{GuoLermabook}, Lemma 6.3, pp. 90-91]), we obtain 
\interdisplaylinepenalty=0
\begin{eqnarray*}
\hat{\psi}_1^*(i)  &\leq&  E_i^{ v_1,v_2^*} \Big [ e^{ \theta_1 \int_{0}^{T  } (r_1(Y(s-),v_1(s,Y(s-)),v_2^*(Y(s-)))-\rho_1^*) ds } \hat{\psi}_1^*(Y(T ))\Big ].
\end{eqnarray*}
It follows from Lemma \ref{bd1} that $\hat{\psi}_1^*(i)\leq W(i)$,  for all $i \in S$. Hence 
\begin{eqnarray*}
\hat{\psi}_1^*(i)  
&\leq& e^{-\theta_1 \rho_1^* T} E_i^{ v_1,v_2^*} \Big [ e^{ \theta_1 \int_{0}^{T} r_1(Y(s-),v_1(s,Y(s-)),v_2^*(Y(s-))) ds } W(Y(T ))\Big ].
\end{eqnarray*}
From Lemma {\ref{expbd3}}, we get
\begin{eqnarray*}
\hat{\psi}_1^*(i) &\leq& e^{- \theta_1 \rho_1^* T}  (W(i)+ b T) E_i^{ v_1,v_2^*} \Big [ e^{ \theta_1 \int_{0}^{T} r_1(Y(s-),v_1(s,Y(s-)),v_2^*(Y(s-))) ds }\Big ].
\end{eqnarray*}
Taking logarithm on both side we obtain
\begin{eqnarray*}
\ln \hat{\psi}_1^*(i)  &\leq& -\theta_1 \rho_1^* T + \ln (W(i)+ b T)+\ln  E_i^{ v_1,v_2^*} \Big [ e^{ \theta_1 \int_{0}^{T } r_1(Y(s-),v_1(s,Y(s-)),v_2^*(Y(s-))) ds } \Big ].
\end{eqnarray*}
 Now dividing by $\theta_1 T$ and by letting $T \to \infty$, we get
\begin{eqnarray*}
 \rho_1^* \leq \limsup_{T\rightarrow \infty}\dfrac{1}{\theta_1 T}\ln E_i^{ v_1,v_2^*} \Big [ e^{\theta_1  \int_{0}^{T} r_1(Y(s-),v_1(s,Y(s-)),v_2^*(Y(s-))) ds } \Big ],\; \; v_1 \in \mathcal{M}_1\,. 
\end{eqnarray*}
Let $v^*_1 \in \mathcal{S}_1$ be a minimizing selector in (\ref{brs18}) and let $Y(t)$ be the continuous time Markov chain corresponding to $(v^*_1,v_2^*) \in \mathcal{S}_1 \times \mathcal{S}_2 $ with initial condition $i$. Then using (\ref{brs18}) and It$\hat{\rm o}$-Dynkin's formula, we get 
\begin{eqnarray*}
E_i^{ v_1^*,v_2^*} \Big [ e^{ \theta_1 \int_{0}^{T } (r_1(Y(s-),v_1^*(Y(s-)),v_2^*(Y(s-)))-\rho_1^*) ds } \hat{\psi}_1^*(Y(T ))\Big ]
 -\hat{\psi}_1^*(i)&=& 0  . 
\end{eqnarray*}
Since $\hat{\psi}$ is bounded below by Lemma \ref{bd2}, we have
\begin{eqnarray*}
 \hat{\psi}_1^*(i) 
 &\geq& K E_i^{ v_1^*,v_2^*} \Big [ e^{ \theta_1 \int_{0}^{T} (r_1(Y(s-),v_1^*(Y(s-)),v_2^*(Y(s-)))-\rho_1^*) ds }
 \Big ],
\end{eqnarray*}
where $$ K  = \dfrac{1} {W(i_0) }\wedge \min_{i \in C_0^c} \{\hat{\psi}^*_1(i) \}.$$
Taking logarithm, dividing by $\theta_1  T$ and by letting $T \to \infty$, we get
\begin{eqnarray*}
 \rho_1^* \geq \limsup_{T\rightarrow \infty}\dfrac{1}{\theta_1 T}\ln E_i^{ v_1^*,v_2^*} \Big [ e^{ \theta_1 \int_{0}^{T} r_1(Y(s-),v_1^*(Y(s-)),v_2^*(Y(s-))) ds } \Big ]. 
\end{eqnarray*}
 Therefore 
\begin{equation*}
\rho_1^* =\rho_1^{v_1^*,v_2^*}\leq \rho_1^{v_1,v_2^*} \; \; \forall v_1 \in \mathcal{M}_1.
\end{equation*} 
Using analogous argument we can show that
\begin{equation*}
\rho_2^* =\rho_2^{v_1^*,v_2^*}\leq \rho_2^{v_1^*,v_2} \; \; \forall v_2 \in \mathcal{M}_2.
\end{equation*} 
This completes the proof.   
\end{proof}
\begin{remark}
 Note that $\rho_i^*, \; v_i^*, \; i=1,2$,  depend on $\theta_1,\; \theta_2$. As before we have suppressed this dependence for notational convenience.
\end{remark}

\section{Conclusion} We have established the existence of a pair of stationary strategies which constitutes a pair of Nash equilibrium strategies for risk sensitive stochastic games for continuous time Markov chain with ergodic cost. We have achieved these under a Lyapunov type stability assumption (A3) and a small cost condition (A4) which lead to the existence of suitable solution to the corresponding coupled HJB equations. The Lyapunov type of stability assumption is standard in literature (see, e.g., Guo and Hern$\acute{\rm a}$ndez-Lerma  \cite{GuoLermabook}). The small cost assumption mean that the risk aversion parameter $\theta_k$ of player $k$ must be small. For the discounted cost criterion we have established the existence of Nash equilibrium in Markov strategies under an additive structure (A2). It will be interesting to investigate if (A2) can be dropped to achieve the same result.
\section*{Acknowledgments.} The work of the first named author is supported in part by UGC Centre for Advanced Study. The work of the second named author is supported in part by the DST, India project no. SR/S4/MS:751/12. The work of the third named author is supported in part by Dr. D. S. Kothari postdoctoral fellowship of UGC.



%
%
%





\begin{thebibliography}{1}
\bibitem{TB}  Basar, T. 1999.  Nash equilibria of risk-sensitive nonlinear stochastic differential games. {\it J. Optim. Theory Appl.} {\bf 100} 479-498.

\bibitem{RB}  Bellman, R. 1957. {\it Dynamic programming}, Princeton University Press, Princeton, N. J.

\bibitem{Benes} Bene$\check{s}$,  V.\ E. 1970.  Existence of optimal strategies based on specified information, for a class of stochastic decision problems. {\it SIAM J. Control.} {\bf 8} 179-188.






\bibitem{Borkar-Ghosh}  Borkar, V. S., M. K. Ghosh. 1992.  Stochastic differential games: occupation measure based approach. {\it J. Optim. Theory Appl.} {\bf 73(2)}  359-385. (Errata: 1996. {\it J. Optim. Theory Appl.} {\bf 88} 251-252.)


\bibitem{CF}  Cavazos-Cadena, R.,  E. Fernandez-Gaucherand. 1999. Controlled Markov chains with risk-sensitive criteria: average cost,
optimality equations, and optimal solutions. {\it Math. Methods Oper. Res.} {\bf 49 } 299-324.

\bibitem{MS1} Di Masi, G.\ B.,  L. Stettner. 1999.  Risk-sensitive control of discrete-time Markov processes with infinite horizon. {\it SIAM J.
Control Optim.} {\bf 38(1)} 61-78. 

\bibitem{MS2} Di Masi, G.\ B.,  L. Stettner. 2000. Infinite horizon risk sensitive control of discrete time Markov processes with small risk.
{\it Systems Control Lett.} {\bf 40} 15-20.

\bibitem{MS3}  Di Masi, G.\ B.,  L. Stettner. 2007. Infinite horizon risk-sensitive control of discrete time Markov processes under minorization property. {\it SIAM J. Control Optim.} {\bf 46(1)} 231-252.



 \bibitem{KH}  El-Karoui, N., S.  Hamadene. 2003.  BSDE and risk-sensitive control, zero-sum and nonzero-sum game problems of
stochastic functional differential equations. {\it Stochastic Process. Appl.} {\bf 107} 145-169.

\bibitem{Fan}  Fan, K. 1952. Fixed-point and minimax theorems in locally convex topological linear spaces. {\it Proc. Nat. Acad. Sc.} {\bf 38} 121-126.

\bibitem{FH}   Fleming, W. H.,  D. Hern$\acute{\rm a}$ndez-Hern$\acute{\rm a}$ndez. 1997. Risk-sensitive control of finite state machines on an infinite horizon I,
{\it SIAM J. Control Optim.} {\bf 35(5)} 1790-1810.

\bibitem{FH1}   Fleming, W. H.,  D. Hern$\acute{\rm a}$ndez-Hern$\acute{\rm a}$ndez. 2011.  On the value of stochastic differential games. {\it Commun. Stoch. Anal.}  {\bf 5} 341-351.
%
 \bibitem{FM}  Fleming, W. H., W. M. McEneaney. 1995.  Risk-sensitive control on an infinite-time horizon. {\it SIAM J. Control Optim.} {\bf 33(6)} 1881-1915.
 
  \bibitem{GhoshSaha}  Ghosh, M.\ K., S. Saha. 2014. Risk-sensitive control of continuous time Markov chain. {\it Stochastics.} {\bf 86} 655-675.
  

\bibitem{GuoLermabook}  Guo, X., O. Hern$\acute{\rm a}$ndez-Lerma. 2009. {\it Continuous time Markov decision
processes}, Stochastic modelling and applied probability 62,  Springer, Berlin.

\bibitem{GH1} Guo,  X., O. Hern$\acute{\rm a}$ndez-Lerma. 2005. Nonzero-sum games for continuous-time Markov chains with unbounded discounted payoffs. {\it J. Appl. Probab.} {\bf 42(2)} 303-320. 

\bibitem{GH2}  Guo, X., O. Hern$\acute{\rm a}$ndez-Lerma. 2003. Zero-sum games for continuous-time Markov chains with unbounded transition and average payoff rates. {\it J. Appl. Probab.} {\bf 40(2)} 327-345.

\bibitem{GH3}  Guo, X., O. Hern$\acute{\rm a}$ndez-Lerma. 2007. Zero-sum games for continuous-time jump
Markov processes in Polish spaces: discounted payoffs. {\it Adv. in Appl. Probab.} {\bf 39(3)} 645-668.



\bibitem{HM1}  Hern$\acute{\rm a}$ndez-Hern$\acute{\rm a}$ndez, D., S. I. Marcus. 1996. Risk-sensitive
control of Markov processes in countable state space. {\it Systems \& Control Letters.} {\bf 29} 147-155.

\bibitem{HM2}  Hern$\acute{\rm a}$ndez-Hern$\acute{\rm a}$ndez, D., S. I. Marcus. 1998. Risk-sensitive control of Markov processes in countable state space.
{\it Systems \& Control Letters.} {\bf 34(1-2)} 105-106(Errata corriege).

\bibitem{Himmelberg}  Himmelberg, C. J., T. Parthasarathy, T. E. S. Raghavan and  F. S. Van Vleck. 1976.  Existence of p-equilibrium and optimal stationary strategies in stochastic games. {\it Proc. Amer. Math. Soc.} {\bf 60} 245-251.

\bibitem{HM3}  Howard, R. A., J. E. Matheson. 1972. Risk-sensitive Markov decision processes. {\it Manag. Sci.} {\bf 8} 356-369.

\bibitem{J}  Jacobson, D. H. 1973. Optimal stochastic linear systems with exponential performance criteria and their relation to
deteministic differential games. {\it IEEE Trans. Automat. Control.} {\bf AC-18} 124-131.

\bibitem{JBE}  James, M. R., J. S. Baras, R. J. Elliott. 1994. Risk-sensitive control and dynamic games for partially observed discrete-time nonlinear systems, {\it IEEE Trans. Automat. Control.} {\bf 39}(4) 780-792.

\bibitem{KL}  Klompstra, M. B.  2000.  Nash equilibria in risk-sensitive dynamic games. {\it IEEE Trans. Automat. Control.} {\bf 45}(7) 1397-1401.





\bibitem{SP} Kumar, K.\ S., C. Pal. 2013. Risk-Sensitive control of pure jump process on countable space with near monotone cost. {\it Appl. Math. Optim.} {\bf 68}(3) 311-331.

\bibitem{SP1} Kumar, K.\ S., C. Pal. 2015. Risk-Sensitive ergodic control of continuous time Markov processes with denumerable state space. {\it Stoch. Anal. Appl.} {\bf 33}(5)  863-881.

\bibitem{R}  Rothblum, U. G. 1984.  Multiplicative Markov decision chains. {\it Math. Oper. Res.} {\bf 9}  6-24.

\bibitem{Warga} Warga, J. 1967. Functions of relaxed controls. {\it SIAM J. Control.} {\bf 5}  628-641.

\bibitem{PW} Whittle, P. 1990. {\it Risk-sensitive optimal control}, Wiley-Interscience Series in Systems and Optimization. John Wiley $ \& $ Sons, Ltd. Chichester.
 
\end{thebibliography}


\end{document}